      \theoremstyle{plain}
      \newtheorem{theorem}{Theorem}[section]
      \newtheorem*{theorem*}{Theorem}
      \newtheorem{lemma}[theorem]{Lemma}
      \newtheorem{corollary}[theorem]{Corollary}
      \newtheorem{proposition}[theorem]{Proposition}
      \newtheorem{conjecture}[theorem]{Conjecture}
      \newtheorem*{aproposition}{Proposition \ref{propWebbConjReformulated}}
      \newtheorem*{bproposition}{Proposition \ref{propMxlRank}}
      \newtheorem*{atheorem}{Theorem \ref{theoremApGGcontractil}}
      \newtheorem*{aconjecture}{Conjecture \ref{strongerConjecture}}
      \theoremstyle{definition}
	  \newtheorem{example}[theorem]{Example}
      \newtheorem{definition}[theorem]{Definition}
      \theoremstyle{remark}
      \newtheorem{remark}[theorem]{Remark}
 \newcommand\ZZ{{\mathbb{Z}}}
 \renewcommand\SS{{\mathbb{S}}}
 \renewcommand\AA{{\mathbb{A}}}
 \def\A{{\mathcal A}}
 \def\B{{\mathcal B}}
 \def\F{{\mathcal F}}
 \def\K{{\mathcal K}}
 \def\N{{\mathcal N}}
 \def\O{{\mathcal O}}
 \def\P{{\mathcal P}}
 \def\R{{\mathcal R}}
 \def\S{{\mathcal S}}
 \def\X{{\mathcal X}}
 \def\PSL{{\text{PSL}}}
 \def\Syl{{\text{Syl}}}
 \def\Max{{\text{Max}}}
\newcommand{\weak}{\underset{w}{\approx}}
\newcommand{\normal}{\trianglelefteq}
\newcommand\gen[1]{\left\langle#1\right\rangle}
      \def\@setcopyright{}
      \def\serieslogo@{}
\begin{document}

\title [A stronger reformulation of Webb's conjecture]{A stronger reformulation of Webb's conjecture in terms of finite topological spaces}
   \author{Kevin Iv\'an Piterman}
   \address{Departamento  de Matem\'atica \\IMAS-CONICET\\
 FCEyN, Universidad de Buenos Aires. Buenos Aires, Argentina.}
\email{kpiterman@dm.uba.ar}

\thanks{Partially supported by grants UBACyT 20020160100081BA}

   \begin{abstract}
We investigate a stronger formulation of Webb's conjecture on the contractibilty of the orbit space of the $p$-subgroup complexes in terms of finite topological spaces. The original conjecture, which was first proved by Symonds and, more recently, by Bux, Libman and Linckelmann, can be restated in terms of the topology of certain finite spaces. We propose a stronger conjecture, and prove various particular cases by combining fusion theory of finite groups and homotopy theory of finite spaces.
   \end{abstract}

\subjclass[2010]{20J99, 20J05, 20D20, 20D30, 05E18, 06A11.}

\keywords{$p$-subgroups, posets, finite topological spaces, orbit spaces, fusion.}

\maketitle

\section{Introduction}

Let $G$ be a finite group and let $p$ be a prime number dividing $|G|$, the order of $G$. Denote by $\S_p(G)$ the poset of non-trivial $p$-subgroups of $G$ and by $\A_p(G)$ the subposet of non-trivial elementary abelian $p$-subgroups of $G$. The study of the posets of $p$-subgroups began in the seventies with K. Brown's proof of the Homological Sylow theorem, which states that $\chi(\K(\S_p(G)))\equiv 1\mod |G|_p$ (see \cite{Bro75}). Here $|G|_p$ is the greatest power of $p$ dividing $|G|$ and $\K(X)$ is the classifying space of the finite poset $X$, i.e. the simplicial complex whose simplices are the non-empty chains of $X$. Some years later, D. Quillen introduced the subposet $\A_p(G)$ and showed that the inclusion $\A_p(G)\hookrightarrow \S_p(G)$ induces a homotopy equivalence $\K(\A_p(G))\hookrightarrow\K(\S_p(G))$ at the level of complexes \cite{Qui78}. Quillen noticed that $\K(\S_p(G))$ is contractible if $G$ has a non-trivial normal $p$-subgroup and conjectured the converse. This problem remains open but there have been significant advances in this direction, especially with the work of M. Aschbacher and S.D. Smith \cite{AS93}. The standard way for studying the relationship between the $p$-subgroup posets and the algebraic properties of $G$ is by means of the topological properties of their associated simplicial complexes (see for example \cite{Asc93,AK90,AS93,Bou84,Dia16, HI88, Kso03,Kso04,Smi11,TW91,Web87}). In \cite{Web87} P. Webb related these posets with representation theory and conjectured that the orbit space of the topological space $\K(\S_p(G))$ is contractible. In \cite{Sym98} P. Symonds proved Webb's conjecture by using Whitehead's theorem: he showed that it is simply connected and acyclic. Another proof of Webb's conjecture is due to K.U. Bux (see \cite{Bux99}) via Bestvina-Brady's approach to Morse Theory.  More recent versions and generalizations of this conjecture (which use orbit spaces arising from fusion systems) can be found in \cite{Lib08,Lin09}. However, all these works rely on the study of the homotopy properties of certain CW-complexes associated to the posets.

In our previous work \cite{MP18} we have investigated the posets $\A_p(G)$ and $\S_p(G)$ from the point of view of the homotopy theory of finite topological spaces, following R.E. Stong's approach of \cite{Sto84}. Stong showed that $\A_p(G)$ and $\S_p(G)$ have, in general, different homotopy types viewed as finite topological spaces (see also \cite{MP18}), although their associated simplicial complexes $\K(\A_p(G))$ and $\K(\S_p(G))$ are always $G$-homotopy equivalent (see \cite{Qui78,TW91}). In this context, Quillen's conjecture is equivalent to saying that if $\S_p(G)$ is a homotopically trivial finite space then it is contractible as finite space. Recall that a topological space is homotopically trivial if all its homotopy groups are trivial. It is known that, in the context of finite spaces, being homotopically trivial is in general strictly weaker than being contractible (see \cite{Bar11a} for  examples of homotopically trivial but not contractible finite spaces). In \cite{MP18} it is shown that Quillen's conjecture cannot be restated in terms of the finite space $\A_p(G)$. Concretely, in \cite{MP18} we answered a question raised by Stong in \cite{Sto84} by exhibiting a poset $\A_p(G)$ which is homotopically trivial but not contractible as finite space. 

In this paper, we continue the study of the posets of $p$-subgroups from the point of view of finite spaces. In this direction, Webb's conjecture can be reformulated by saying that $\A_p(G)'/G$ and $\S_p(G)'/G$ are homotopically trivial finite spaces. Here $X'$ denotes the subdivision of the poset $X$ and it is the poset of non-empty chains of $X$. In fact, in all the examples that we have explored $\A_p(G)'/G$ turned out to be contractible, and we believe that a stronger statement of Webb's conjecture is true: $\A_p(G)'/G$ is a contractible finite space. We will prove this fact in some particular cases.

The paper is organized as follows. In Section 2, we recall some basic facts on finite spaces and equivariant theory for simplicial complexes. In Section 3 we restate Webb's original conjecture in terms of finite spaces:

\begin{aproposition}
(Webb's Conjecture) If $K\subseteq \K(\S_p(G))$ is a $G$-invariant subcomplex which is $G$-homotopy equivalent to $\K(\S_p(G))$, the finite space $\X(K)/G$ is homotopically trivial. In particular, this holds for $K \in\{\K(\S_p(G)), \K(\A_p(G)), \K(\B_p(G)), \R_p(G)\}$.
\end{aproposition}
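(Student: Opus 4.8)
The plan is to show that for any finite simplicial $G$-complex $K$ the finite space $\X(K)/G$ has the same weak homotopy type as the topological orbit space $|K|/G$ of the underlying polyhedron, and then to invoke the already-established Webb's conjecture. The bridge between the two sides is McCord's theorem together with the behaviour of the barycentric subdivision under the $G$-action.

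First I would record that $\X(K)/G$ is a genuine poset and not merely a preorder: since $G$ acts simplicially it is dimension-preserving, so the rank function of $\X(K)$ descends to the quotient, and a strictly monotone rank forces antisymmetry. By McCord's theorem (recalled in Section 2), the order complex of a finite poset is weakly homotopy equivalent to the finite space itself, so applying it to $\X(K)/G$ gives
\[
|\K(\X(K)/G)| \weak \X(K)/G .
\]
It therefore suffices to identify $\K(\X(K)/G)$, up to homeomorphism, with $|K|/G$.

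Second, the key observation is that $G$ acts \emph{regularly} on the barycentric subdivision $\K(\X(K))$ of $K$: a simplex there is a chain $\sigma_0<\dots<\sigma_k$ of simplices of $K$, and any $g$ fixing it setwise preserves both the order and the dimensions, hence fixes each $\sigma_i$. Consequently $\K(\X(K))/G$ is again a simplicial complex, and sending the orbit of a chain $\sigma_0<\dots<\sigma_k$ to the chain $[\sigma_0]<\dots<[\sigma_k]$ defines a simplicial isomorphism $\K(\X(K))/G \cong \K(\X(K)/G)$, where surjectivity uses that comparabilities in the quotient poset lift, one step at a time, to comparabilities upstairs. Passing to geometric realizations, and using both that realization commutes with quotients by regular actions and the canonical equivariant homeomorphism $|\K(\X(K))|\cong |K|$, yields
\[
|\K(\X(K)/G)| \;\cong\; |\K(\X(K))|/G \;\cong\; |K|/G .
\]
Combining with the previous step gives $\X(K)/G \weak |K|/G$.

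Finally I would conclude. If $K\subseteq \K(\S_p(G))$ is $G$-invariant and $G$-homotopy equivalent to $\K(\S_p(G))$, then the induced map $|K|/G\to |\K(\S_p(G))|/G$ of orbit spaces is a homotopy equivalence, and the target is contractible by Symonds' theorem, i.e.\ by Webb's conjecture. Hence $|K|/G$, and therefore $\X(K)/G$, is homotopically trivial. The listed cases follow because each of $\K(\A_p(G))$, $\K(\B_p(G))$ and $\R_p(G)$ is a $G$-invariant subcomplex of $\K(\S_p(G))$ that is $G$-homotopy equivalent to it, by the results of Quillen and Thévenaz--Webb. The main obstacle is the combinatorial identification in the second step: one must verify that the action on the subdivision is regular and that the orbit complex of the subdivision agrees with the order complex of the quotient poset, in particular that every chain of $\X(K)/G$ lifts to a chain of $\X(K)$. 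Once this bookkeeping is settled, the topological conclusion is immediate from McCord's theorem and the invariance of orbit spaces under $G$-homotopy equivalence.
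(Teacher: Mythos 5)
Your overall route is the same as the paper's: use McCord's theorem and the barycentric subdivision to identify $\X(K)/G$ up to weak equivalence with $|K|/G$, then transport contractibility from $|\K(\S_p(G))|/G$ (Symonds) along the orbit-space map induced by the $G$-homotopy equivalence, with the special cases supplied by Quillen and Th\'evenaz--Webb. Your bookkeeping steps are fine: $\X(K)/G$ is a poset, every chain of $\X(K)/G$ lifts one step at a time to a chain of $\X(K)$, so $\K(\X(K))/G \cong \K(\X(K)/G)$ as complexes (this is the paper's proposition that $\K(X)/G = \K(X/G)$ for any $G$-poset, no hypothesis needed), and a $G$-homotopy equivalence descends to a homotopy equivalence of orbit spaces.

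However, there is a genuine gap at the step you yourself flag as the main obstacle: your verification of regularity of the $G$-action on $K' = \K(\X(K))$. What you actually prove --- that any $g$ carrying a chain $\sigma_0 < \dots < \sigma_k$ to itself fixes each $\sigma_i$ --- is (a consequence of) Bredon's property (A), and that is strictly weaker than regularity, i.e.\ property (B) on every subgroup, which is what the homeomorphism $\varphi_{K'}\colon |K'|/G \to |K'/G|$ requires. The implication ``setwise stabilizers of simplices act trivially $\Rightarrow$ realization commutes with the quotient'' is false: take $L$ the boundary of a triangle with $\ZZ_3$ acting by rotation; its first subdivision $L' = \K(\X(L))$ is a hexagon on which all setwise stabilizers of simplices are trivial, yet $|L'|/\ZZ_3 \cong \SS^1$ while $|L'/\ZZ_3|$ is an arc, because property (B) fails (two edges of $L'$ with vertexwise conjugate vertices need not be in one orbit; compare the paper's Example \ref{exaFlipS1} and, for failure of (B) in the $p$-subgroup setting itself, Example \ref{exampleNoB}). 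The gap is repairable with material from Section 2: since $K$ is a subcomplex of the order complex $\K(\S_p(G))$, two conjugate vertices lying in a common simplex are comparable $p$-subgroups and hence equal, so $K$ satisfies property (A) (for every subgroup of $G$ as well); by Bredon's proposition the subdivision of a complex with property (A) satisfies property (B) on every subgroup, i.e.\ $K'$ is regular. With that substitution --- which is exactly how the paper argues, via Proposition \ref{propCommutativeSquare} --- your proof closes and coincides with the paper's.
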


Here $\X(K)$ denotes the face poset of the simplicial complex $K$. Note that $X' = \X(\K(X))$ for any poset $X$. The subposet $\B_p(G)\subseteq\S_p(G)$ consists of the $p$-radical subgroups of $G$ and $\R_p(G)\subseteq \K(\S_p(G))$ is the subcomplex with simplices $(P_0 < P_1 < \ldots < P_r)$ where $P_i\normal P_r$ for all $i$. Both $\K(\B_p(G))$ and $\R_p(G)$ have the same $G$-homotopy type as $\K(\S_p(G))$ (see \cite{Bou84,Smi11,TW91}).

We exhibit examples of non-contractible posets $\S_p(G)'/G$ and $\B_p(G)'/G$. For $\A_p(G)'/G$ we propose the following conjecture.

\begin{aconjecture}
The finite space $\A_p(G)'/G$ is contractible.
\end{aconjecture}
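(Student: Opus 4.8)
The plan is to prove the conjecture with the homotopy theory of finite spaces rather than with the associated complexes, guided by two principles. First, a \emph{descent principle}: if $f,g\colon X\to X$ are $G$-equivariant order-preserving self-maps of a finite $G$-poset with $f(x)\le g(x)$ for every $x$, then the induced maps on the quotient poset satisfy $\bar f([x])\le\bar g([x])$, so any $G$-equivariant fence of comparisons on $X$ descends to $X/G$; in particular a $G$-equivariant Stong deformation (a map $h\ge\id$, in the sense of \cite{Sto84}) onto a $G$-invariant subposet $Y$ descends to a deformation of $X/G$ onto $Y/G$. Second, a finite poset with a minimum is contractible as a finite space. So I would look, inside a finite space built from $\A_p(G)$, for a $G$-equivariant Stong retraction onto a subposet whose quotient has a minimum, and then account for the barycentric subdivision appearing in $\A_p(G)'/G$.

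The cleanest case is $p\mid |Z(G)|$. Fix a central element $z$ of order $p$ and put $V=\gen{z}$. Since $V$ is central, $E\mapsto EV$ is a well-defined $G$-equivariant self-map of $\A_p(G)$ with $E\le EV$, so by \cite{Sto84} it is a $G$-equivariant deformation of $\A_p(G)$ onto the invariant subposet $\A_p(G)_{\ge V}=\{E:V\le E\}$, which has $G$-fixed minimum $V$. By the descent principle $\A_p(G)/G$ deformation retracts onto $\A_p(G)_{\ge V}/G$, which has minimum $[V]$ and is therefore contractible. It then remains to pass from $\A_p(G)/G$ to $\A_p(G)'/G$: I would promote the same map to a $G$-equivariant deformation of the subdivision $\A_p(G)'$ onto the chains whose top term contains $V$, and verify that this subposet, carrying a $G$-invariant cone structure, has contractible quotient — the elementary but slightly technical observation that subdivisions of posets with a minimum remain contractible as finite spaces.

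To reach groups with $p\nmid|Z(G)|$ I would pass to fusion. Every non-trivial elementary abelian $p$-subgroup is $G$-conjugate into a fixed Sylow $p$-subgroup $S$, and $G$-fusion among subgroups of $S$ is governed by the fusion system $\F=\F_S(G)$; this identifies $\A_p(G)/G$ with the poset $\A_p(S)/\F$ of $\F$-classes, and the subdivided versions correspondingly. The role of $V$ is then played by a non-trivial subgroup of the fusion-theoretic center $Z(\F)\le Z(S)$: when $Z(\F)\ne 1$ the conical map $E\mapsto EV$ is $\F$-equivariant and the previous argument runs verbatim. This should settle, for instance, the $p$-constrained situations and, after the harmless reduction $\A_p(G)=\A_p(G/O_{p'}(G))$, many $p$-solvable groups; groups of small $p$-rank can be finished by removing beat points from the (small) quotient poset by hand.

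The main obstacle is the opposite extreme $O_p(G)=1$ with $Z(\F)=1$, the reduced fusion systems of (almost) simple groups, where there is no fixed point or central attractor to drive a conical contraction. Here Proposition~\ref{propWebbConjReformulated} already gives that $\A_p(G)'/G$ is homotopically trivial, so only the upgrade from weak contractibility to genuine finite-space contractibility is missing; but this gap is exactly the phenomenon isolated in \cite{MP18}, where homotopically trivial posets $\A_p(G)$ fail to be contractible as finite spaces, so Webb's theorem cannot close the argument formally. Producing an explicit Stong core reduction to a point in these quotients seems to require genuine structural input — control of the $\F$-conjugacy of elementary abelian subgroups and of their normalizers — and I expect it to proceed case by case rather than through a uniform deformation; the bookkeeping forced by the subdivision, which separates the clean statement for $\A_p(G)/G$ from the required statement for $\A_p(G)'/G$, is a persistent secondary difficulty.
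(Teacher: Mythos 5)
First, a point of status: the statement you were asked to prove is Conjecture \ref{strongerConjecture}, which is \emph{open} in this paper; the author proves only special cases (Proposition \ref{propMxlRank}, Theorems \ref{theoremLowDifference} and \ref{theoremPDifferencesLower}, and the corollaries $|G|_p\leq p^4$ and $|G|=p^\alpha q$). You correctly treat it as open rather than claiming a full proof, and the sound parts of your sketch do overlap with the paper: your descent principle is the standard fact that equivariant comparisons $f\leq g$ pass to orbit posets; your central case ($p\mid |Z(G)|$, cone $E\mapsto EV$) is subsumed in the paper's case ``$\A_p(G)$ contractible,'' which is handled not by promoting the cone to the subdivision by hand but by citing that $X$ contractible implies $X'$ contractible \cite[Corollary 4.18]{BM12} and that orbit spaces of contractible finite $G$-spaces are contractible \cite[Proposition 8.3.14]{Bar11a} (Remark \ref{remarkXctr}); and your identification $\A_p(G)'/G=\A_p(P)'/\F$ is exactly the paper's closing remark. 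Note also that the paper proves $\A_p(G)/G$ contractible \emph{unconditionally} (Theorem \ref{theoremApGGcontractil}), with no hypothesis on $Z(\F)$, via fully centralized representatives and the characteristic subgroup $\Omega_1(Z(\Omega_1(C_P(A))))$; your coning argument only recovers this under $Z(\F)\neq 1$.

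The genuine gap is your claim that for $Z(\F)\neq 1$ ``the previous argument runs verbatim'' for the subdivided quotient. It does not, and this is precisely the obstruction the whole of Section 5 is built to circumvent. The cone vertex $V\leq Z(\F)$ sits at the \emph{bottom}: for a chain $c=(A_0<\dots<A_r)$ with $V\nleq A_0$, neither $c\cup (V)$ nor $c\cup cV$ is a chain, and the map $c\mapsto cV=(A_0V\leq\dots\leq A_rV)$, while order-preserving and $\F$-invariant, is not comparable with the identity in $\A_p(P)'$; so there is no one-step equivariant Stong deformation on the subdivision to which your descent principle applies. Unwinding instead the chain-level homotopies hidden in \cite{BM12} requires maps depending on positions inside the chain, and making \emph{those} well defined on orbits runs exactly into the failure of property (B) — levelwise conjugacy of chains does not give simultaneous conjugacy (Example \ref{exampleNoB}). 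This is why the paper's Proposition \ref{propMxlRank} cones at the \emph{top} (appending the maximum $\Omega_1(P)$, a construction that is order-preserving on chains but available only when $\Omega_1(P)$ is abelian), and why the harder cases $r_p(G)-r_p(\Omega)\leq 1$ and $r_p(G)-r_p(\Omega)=2$ with $r_p(G)\geq\log_p(|G|_p)-1$ proceed by careful beat-point removal controlled by Alperin's fusion theorem (Theorem \ref{theoremAlpFT}) — none of which your sketch reproduces. A second overreach: ``$p$-constrained situations'' are not settled by a central attractor. If $\O_p(G)\neq 1$ then $\S_p(G)$ is contractible, but $\A_p(G)$ need not be \cite{MP18}, and the paper explicitly warns that Proposition \ref{propOpGcontractible} does not extend to $\A_p(G)'/G$; the hypothesis $Z(\F)\neq 1$ is far stronger (for $p=2$, by Glauberman's $Z^*$-theorem and your $\O_{p'}(G)$-reduction, it collapses back into your first, central case). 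So your proposal correctly maps the difficulty but closes strictly less than the paper does, and the one case you claim beyond the central one is not actually closed as written.
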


Section 4 is devoted to prove that $\A_p(G)/G$ (without subdividing) is a contractible finite space. The finite space $\S_p(G)/G$ is always contractible because it has a maximum (the class of any Sylow $p$-subgroup). However $\A_p(G)/G$ may have no maximum and the conclusion is not immediate. 

\begin{atheorem}
The finite poset $\A_p(G)/G$ is conically contractible.
\end{atheorem}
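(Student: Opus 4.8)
The plan is to exhibit an explicit conical contraction of the finite poset $\A_p(G)/G$. Recall that to prove a finite poset $X$ is conically contractible it suffices to find a point $x_0\in X$ and an order-preserving map $f\colon X\to X$ such that $x\le f(x)$ and $x_0\le f(x)$ for all $x$: the first inequality gives $f\simeq \id_X$ and the second gives $f\simeq c_{x_0}$ (the constant map), so $X$ is contractible, the contraction being conical. I would fix a Sylow $p$-subgroup $S\le G$ and take as cone point the class $[Z]$ of $Z:=\Omega_1(Z(S))$, which is a non-trivial elementary abelian $p$-subgroup. For the map, to each class I attach the center of a centralizer: choosing a \emph{fully centralized} representative $A\le S$ (one with $C_S(A)$ of largest possible order in the $G$-conjugacy class of $A$, equivalently $C_S(A)\in\Syl_p(C_G(A))$), I set $f([A]):=[\Omega_1(Z(C_S(A)))]$.

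The cone conditions are then the easy part. The group $\Omega_1(Z(C_S(A)))$ is elementary abelian and contains $A$ (since $A\le Z(C_S(A))$ and $A$ has exponent $p$) as well as $Z$ (because $Z\le Z(S)$ centralizes $A$, hence lies in $C_S(A)$, and is central in $S\supseteq C_S(A)$, so $Z\le\Omega_1(Z(C_S(A)))$); these containments give at once $[A]\le f([A])$ and $[Z]\le f([A])$. The first genuinely fusion-theoretic point is well-definedness: two fully centralized representatives $A,A'$ of the same class are $G$-conjugate, so $C_S(A)$ and $C_S(A')$ are Sylow $p$-subgroups of the conjugate centralizers $C_G(A),C_G(A')$; by Sylow's theorem inside the centralizer they are $G$-conjugate, and transporting by the conjugating element shows $\Omega_1(Z(C_S(A)))$ and $\Omega_1(Z(C_S(A')))$ are $G$-conjugate, so $f([A])$ is independent of the chosen representative.

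The hard part, and the step I expect to be the main obstacle, is that $f$ is order-preserving. The clean ingredient is a monotonicity lemma valid for honest containments inside $S$: if $A\le B\le S$ are elementary abelian then $B\le C_S(A)$ and $C_S(B)\le C_S(A)$, so any $z\in Z(C_S(A))$ centralizes $B$ and all of $C_S(B)$ and therefore lies in $Z(C_S(B))$; thus $Z(C_S(A))\le Z(C_S(B))$ and $\Omega_1(Z(C_S(A)))\le\Omega_1(Z(C_S(B)))$. The difficulty is that a relation $[A]\le[B]$ in the quotient is only witnessed by \emph{some} conjugates, whereas $f$ is evaluated on fully centralized representatives, and a class containment need not be realized by fully centralized representatives at the same time. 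To reconcile these I would, given $[A]\le[B]$, fix a fully centralized $B_0$, choose a conjugate $A_1\le B_0$, and apply the monotonicity lemma to get $[\Omega_1(Z(C_S(A_1)))]\le f([B])$; it then remains to compare this with $f([A])=[\Omega_1(Z(C_S(A_0)))]$ for a fully centralized representative $A_0$ of $[A]$. This last comparison is the fusion-theoretic heart of the argument: since $A_0$ is fully centralized, Sylow's theorem applied in $C_G(A_0)$ produces $m\in G$ with ${}^mA_1=A_0$ and ${}^mC_S(A_1)\le C_S(A_0)$, and one must extract from this nesting of centralizers the inequality $\Omega_1(Z(C_S(A_0)))\le_G\Omega_1(Z(C_S(A_1)))$, i.e. $f([A])\le[\Omega_1(Z(C_S(A_1)))]$. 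Controlling how the operator $\Omega_1(Z(C_S(-)))$ transforms when one passes between a fully centralized subgroup and an arbitrary conjugate --- equivalently, checking that $f$ is the reflection of $\A_p(G)/G$ onto the up-set of $[Z]$ --- is exactly where the interplay between full centralization, Sylow's theorem in centralizers, and the center operator has to be made precise; once order-preservation is secured, the cone conditions already established finish the proof.
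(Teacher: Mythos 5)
Your cone point, your map $f$, the two cone inequalities, the well-definedness argument, and the monotonicity lemma for honest containments $A\le B\le S$ are all correct, and they follow the paper's strategy (the paper uses the operator $\Omega_1(Z(\Omega_1(C_S(A))))$ rather than your $\Omega_1(Z(C_S(A)))$, but your lemma shows your operator serves equally well once the rest is in place). The genuine gap is exactly where you flag it: order-preservation, and your chosen decomposition makes it worse than it needs to be. By anchoring at a fully centralized representative $B_0$ of the \emph{top} class and taking an arbitrary conjugate $A_1\le B_0$ of the bottom, you are forced to compare $f([A])=[\Omega_1(Z(C_S(A_0)))]$ (with $A_0$ fully centralized) against $[\Omega_1(Z(C_S(A_1)))]$ for a non-fully-centralized conjugate $A_1$. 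Your tools do not deliver this: full centralization of $A_0$ gives $m\in G$ with $A_1^m=A_0$ and $C_S(A_1)^m\le C_S(A_0)$, and every element of $Z(C_S(A_0))$ does centralize $C_S(A_1)^m$; but to conclude $\Omega_1(Z(C_S(A_0)))\le \Omega_1(Z(C_S(A_1)))^m=\Omega_1(Z(C_{S^m}(A_0)))$ you would additionally need $Z(C_S(A_0))\subseteq S^m$, which nothing in the nesting of centralizers provides --- the center of the \emph{larger} centralizer has no reason to lie in the conjugated Sylow $S^m$. So the ``reflection'' property you hope for is not a formal consequence of Sylow's theorem in centralizers, and your proof stops precisely at the step carrying all the content.

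The paper closes this gap by decomposing the other way: anchor the \emph{bottom}. Given $\overline{A}<\overline{B}$ with both representatives in $S$ fully centralized, pick $g$ with $A<B^g$; then $B^g\le C_G(A)$ because $B^g$ is abelian and contains $A$, and since $C_S(A)\in\Syl_p(C_G(A))$ one can choose $h\in C_G(A)$ with $B^{gh}\le C_S(A)$ and, crucially, with $B^{gh}$ fully centralized \emph{in $C_G(A)$} relative to the Sylow subgroup $C_S(A)$. The observation that makes this useful is that $A\le B^{gh}$ forces $C_{C_S(A)}(B^{gh})=C_S(B^{gh})$ and $C_{C_G(A)}(B^{gh})=C_G(B^{gh})$ (centralizing $B^{gh}$ already centralizes $A$), so $B^{gh}$ is automatically fully centralized in $G$ as well. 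Now only two steps you have already proved are needed: your monotonicity lemma applied to the honest containment $A\le B^{gh}\le S$, and your well-definedness step applied to the two fully centralized representatives $B^{gh}$ and $B$ of $\overline{B}$. The problematic comparison at a non-fully-centralized representative never arises. If you install this one trick --- transport $B$, not $A$, and transport it so as to preserve full centralization --- your argument becomes a complete proof.
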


The techniques used in the proof of this theorem involve the study of the behaviour of the fully centralized elementary abelian $p$-subgroups inside a fixed Sylow $p$-subgroup of $G$.

In Section 5 we show some particular cases for which the finite spaces $\A_p(G)'/G$, $\S_p(G)'/G$ and $\B_p(G)'/G$ are contractible.

\begin{bproposition}
If $\A_p(G)\subseteq \S_p(G)$ is a strong deformation retract, both $\A_p(G)'/G$ and $\S_p(G)'/G$ are contractible finite spaces. In particular, this holds when the Sylow $p$-subgroups are abelian. Moreover, in the latter case $\B_p(G)\subseteq \S_p(G)$ is a strong deformation retract and $\B_p(G)'/G$ is a contractible finite space.
\end{bproposition}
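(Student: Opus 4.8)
The plan is to (i) pin down the structural meaning of the hypothesis, (ii) reduce all three contractibility claims to a single one, and (iii) prove that one while isolating the fusion‑theoretic obstacle.

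First I would record that, for finite spaces, a strong deformation retraction $\S_p(G)\to\A_p(G)$ is the same as a $G$‑equivariant order‑preserving retraction $r$ with $r(P)\le P$ for all $P$ and $r|_{\A_p(G)}=\id$. Fixing a Sylow $S$ and any elementary abelian $E\le S$, monotonicity gives $E=r(E)\le r(S)$, so $r(S)$ contains \emph{every} elementary abelian subgroup of $S$; since $r(S)$ is itself elementary abelian, $Z:=r(S)=\Omega_1(S)$ must be elementary abelian, and conversely $r=\Omega_1$ works precisely when $\Omega_1(S)$ is elementary abelian (e.g.\ whenever $S$ is abelian). Thus the hypothesis is equivalent to asking that $\Omega_1(S)$ be elementary abelian; then $Z=\Omega_1(S)$ is a characteristic (hence $N_G(S)$‑invariant) subgroup which is the \emph{maximum} of $\A_p(S)$, and its class $[Z]$ is the maximum of $\A_p(G)/G$. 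In particular this already refines Theorem~\ref{theoremApGGcontractil} in the present situation, since $\A_p(G)/G$ now literally has a maximum.

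Next, since $r=\Omega_1$ is $G$‑equivariant, monotone and satisfies $r\le\id$, it exhibits $\A_p(G)$ as a $G$‑deformation retract of $\S_p(G)$. Passing to order complexes (on which, after one subdivision, the $G$‑action is regular) and then to $G$‑quotients, this descends to a homotopy equivalence between the face posets $\X(\K(\S_p(G))/G)=\S_p(G)'/G$ and $\X(\K(\A_p(G))/G)=\A_p(G)'/G$. Hence it suffices to prove that $\A_p(G)'/G$ is contractible. The clean mechanism I would isolate for this is the following observation: if a finite $G$‑poset $X$ has a $G$‑\emph{fixed} maximum $z$, then $c\mapsto c\cup\{z\}$ is $G$‑equivariant and order‑preserving with $\id\le(\,\cdot\cup\{z\}\,)\ge\cte_{\{z\}}$, so it descends to a conical contraction of $X'/G$ to the class $[\{z\}]$.

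The difficulty — and the main obstacle — is that $Z$ is the maximum of $\A_p(S)$ but is \emph{not} normal in $G$, so $[Z]$ is only the maximum of the quotient and the cone‑off $c\mapsto c\cup\{Z\}$ need not be well defined on $G$‑orbits: $G$‑conjugate chains lying in $Z$ may fail to remain conjugate after appending $Z$ unless one controls the $G$‑fusion of the subgroups appearing in these chains. I would resolve this by representing each orbit of chains by a chain of \emph{fully centralized} subgroups inside $S$ (so that $Z\le C_S(E)$ and enlargement by $Z$ is forced), which is exactly the fusion‑theoretic input used for Theorem~\ref{theoremApGGcontractil}; this is where fusion theory and finite‑space homotopy theory must be combined. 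In the abelian case the obstacle disappears cleanly: Burnside's fusion theorem gives $\A_p(G)'/G=\A_p(S)'/N_G(S)$, and $Z=\Omega_1(S)$ is $N_G(S)$‑fixed, so the displayed lemma applies verbatim to the $N_G(S)$‑poset $\A_p(S)$ and yields contractibility of $\A_p(G)'/G$ and hence of $\S_p(G)'/G$.

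Finally, for $\B_p(G)$ with $S$ abelian, I would first check that $\B_p(G)\subseteq\S_p(G)$ is a $G$‑strong deformation retract, using that $S$ is $p$‑radical and that, by the abelian structure, the radical subgroups form a $G$‑invariant retract of $\S_p(G)$ with $[S]$ the maximum of $\B_p(G)/G$; reducing to $\B_p(S)'/N_G(S)$ by Burnside, one applies the same lemma with cone point $[\{S\}]$ (here $S$ is $N_G(S)$‑fixed) to conclude that $\B_p(G)'/G$ is contractible. The only genuinely delicate point throughout is the fusion control needed to descend the cone‑off to the orbit space in the non‑abelian situation; everything else is the formal machinery of equivariant strong collapses of finite spaces.
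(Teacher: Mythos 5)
Your strategy for the main claim is essentially the paper's: descend the equivariant strong deformation retract to a homotopy equivalence $\A_p(G)'/G\simeq\S_p(G)'/G$, then cone off orbits of chains at $\Omega_1(P)$, using representatives whose top member is fully centralized so that any conjugate of $\Omega_1(P)$ landing in $P$ is generated by order-$p$ elements, hence lies in and equals $\Omega_1(P)$ --- this is exactly the well-definedness argument in the paper's proof, which you identify correctly but only sketch. The genuine gap is in your opening reduction: you assert that a strong deformation retraction of finite spaces is the same thing as an order-preserving equivariant retraction $r$ with $r\leq\id$. That equivalence is false in general: homotopies of finite spaces are fences $f_0\le f_1\ge f_2\le\cdots$, and a finite poset can strong deformation retract to a point while having neither minimum nor maximum (four points already suffice), in which case no retraction comparable to the identity exists. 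So your derivation of ``$\Omega_1(P)$ is elementary abelian'' from the SDR hypothesis does not stand as written. The implication is nonetheless true --- it is precisely \cite[Proposition 3.2]{MP18}, which is what the paper invokes at this point --- so the gap is repaired by citation; your converse direction (that $r=\Omega_1$ is a comparable equivariant retraction when $\Omega_1(P)$ is abelian) is fine. Also, regularity of the action after one subdivision plays no role here: an equivariant homotopy equivalence of $G$-posets descends to the orbit posets directly.

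The second gap is the ``moreover'' clause: you assert, but never prove, that $\B_p(G)\subseteq\S_p(G)$ is a strong deformation retract when the Sylow subgroups are abelian; saying that ``by the abelian structure, the radical subgroups form a $G$-invariant retract'' merely restates the claim. The paper proves it by identifying $\B_p(G)$ with $\i(\S_p(G))$, the poset of non-trivial intersections of Sylow $p$-subgroups, which is always an equivariant strong deformation retract: radical subgroups are always intersections of the Sylows containing them, and conversely abelianity gives $\Syl_p(N_G(Q))=\{P'\in\Syl_p(G):Q\le P'\}$, whence $\O_p(N_G(Q))=\bigcap_{Q\le P'}P'=Q$ for every such intersection $Q$ (equivalently, $Q\mapsto\bigcap_{Q\le P'}P'$ is an order-preserving equivariant retraction above the identity); you need to supply some such argument. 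On the positive side, your Burnside reduction $\A_p(G)'/G\cong\A_p(P)'/N_G(P)$ with the cone at the $N_G(P)$-fixed maximum $\Omega_1(P)$ is correct and is a genuinely different, cleaner route in the abelian-Sylow case than the paper's uniform fully-centralized argument, and the same reduction handles $\B_p(G)'/G$ with cone point $P$ independently of the SDR claim --- but since the statement asserts the SDR itself, that part still requires the missing argument.
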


In the following theorem we summarize the cases for which we prove that $\A_p(G)'/G$ is contractible.

Recall that $r_p(G)$, the $p$-rank of $G$, is the maximum integer $r$ such that $G$ has an elementary abelian $p$-subgroup of order $p^r$. Denote by $\Syl_p(G)$ the set of Sylow $p$-subgroups of $G$. Fix $P\in \Syl_p(G)$ and let $\Omega$ be the subgroup of $P$ of central elements in $P$ of order dividing $p$.

\begin{theorem*}
$\A_p(G)'/G$ is a contractible finite space in the following cases:
\begin{enumerate}
\item $|G| = p^\alpha.q$ with $q$ a prime number,
\item $\A_p(G)$ is contractible (as finite space),
\item $r_p(G) - r_p(\Omega) \leq 1$,
\item $r_p(G) - r_p(\Omega) = 2$ and $r_p(G)\geq \log_p(|G|_p) - 1$,
\item $|G|_p\leq p^4$.
\end{enumerate}
\end{theorem*}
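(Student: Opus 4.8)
The common strategy is to strengthen the \emph{homotopical triviality} of $\A_p(G)'/G$ — which is free from the reformulation of Webb's conjecture in Proposition \ref{propWebbConjReformulated} — to genuine contractibility as a finite space, by exhibiting an explicit conical collapse onto the class of the central subgroup $\Omega = \Omega_1(Z(P))$, in the spirit of Theorem \ref{theoremApGGcontractil} and Proposition \ref{propMxlRank}. Two elementary facts are used throughout. First, every maximal elementary abelian subgroup of $P$ contains $\Omega$, since $\Omega$ is central of exponent $p$ and may be adjoined to any elementary abelian subgroup of $P$. Second, every $G$-class in $\A_p(G)$ has a fully centralized representative $E\le P$, and for such $E$ one has $\Omega \le Z(P) \le C_G(E)$, so that $E\Omega \in \A_p(G)$. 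These make $E\mapsto E\Omega$, and its descent $[E]\mapsto[E\Omega]$ to the orbit poset, the prototype of the contraction.

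I would first dispose of case (2), which I claim is equivalent to $O_p(G)\neq 1$. Indeed, if $\A_p(G)$ is contractible as a finite space then, by Stong's fixed-point theorem for finite $G$-spaces, the subposet of $G$-fixed points — the nontrivial normal elementary abelian $p$-subgroups — is contractible, hence nonempty, so $O_p(G)\neq 1$. Conversely, writing $V=\Omega_1(Z(O_p(G)))\normal G$, the classical centralizer fence $\id \leq (E\mapsto E\,C_V(E)) \geq (E\mapsto C_V(E)) \leq \cte_V$ consists of $G$-equivariant poset maps landing in $\A_p(G)$: each $C_V(E)$ is nontrivial because the nontrivial normal subgroup $V$ of the $p$-group $EV$ meets its centre, which lies in $C_V(E)$. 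Thus $\A_p(G)$ is $G$-contractible, and the explicit form of this contraction (join with, and centralizer in, the $G$-invariant subgroup $V$) is what allows it to be lifted to the barycentric subdivision and then descended to orbits, yielding contractibility of $\A_p(G)'/G$.

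Cases (3)–(5) cover the situation where $O_p(G)$ may be trivial, so that $\A_p(G)$ need not be contractible and the mechanism of case (2) is unavailable; here I would exploit the rank gap $d:=r_p(G)-r_p(\Omega)$ directly. Since every maximal elementary abelian of $P$ contains $\Omega$, the gap $d$ bounds the height of $\A_p(P)$ above the central subgroup $\Omega$: when $d\le 1$ (case 3) this height is at most one, and when $d=2$ with $|G|_p\le p^{r_p(G)+1}$ (case 4) it is at most two, with $P$ within bounded distance of its centre. In both cases the shortness of chains lets me implement the $\Omega$-cone at the level of chains: after choosing fully centralized representatives coherently along a chain, the assignment $(E_0<\cdots<E_k)\mapsto(E_0\Omega\le\cdots\le E_k\Omega)$ together with the bottom cell $\Omega$ should define, on $\A_p(G)'/G$, a conical contraction onto $[\Omega]$ (in the favourable subcases one instead verifies the hypothesis of Proposition \ref{propMxlRank} that $\A_p(G)\subseteq\S_p(G)$ is a strong deformation retract). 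Case (5) then follows from (3)–(4): when $|G|_p\le p^4$ one has $r_p(\Omega)\ge 1$, while $r_p(G)=\log_p|G|_p$ would force $P$ elementary abelian and hence $d=0$, so $d\le 2$; and $d=2$ forces $|G|_p=p^4$ with $r_p(G)=3$, so the order hypothesis of (4) holds automatically.

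Case (1), $|G|=p^\alpha q$, is not governed by the numerology, and I would treat it by structure theory: such $G$ is solvable with Sylow $q$-subgroup of order $q$, and a Sylow count splits it into the subcase $P\normal G$ (then $O_p(G)=P\neq 1$, handled by case (2)) and the complementary subcase, where solvability and the order $q$ of the Sylow $q$-subgroup make the $p$-fusion explicit enough to build the contraction by hand. The main obstacle, present in every case and responsible for the conjecture remaining open in general, is that subdivision and orbit space do not commute: the unconditional conical contractibility of $\A_p(G)/G$ from Theorem \ref{theoremApGGcontractil} does \emph{not} transfer to $\A_p(G)'/G$, because the cone $[E]\mapsto[E\Omega]$ on orbits of subgroups need not lift to a cone on orbits of \emph{chains}. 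The crux is therefore to promote the $\Omega$-cone to the chain level $G$-equivariantly — choosing fully centralized representatives simultaneously along each chain and checking monotonicity on $\A_p(G)'/G$ — and it is precisely the breakdown of this lift, when $\A_p(G)$ is not contractible and the fan over $\Omega$ is tall, that the five hypotheses are designed to avoid.
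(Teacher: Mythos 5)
Your proposal correctly identifies the guiding mechanism (the $\Omega$-cone, fully centralized representatives, the difficulty of descending to orbits of chains), and your numerology deducing case (5) from (3) and (4) is correct and matches the paper's corollary; but two of the five cases rest on a false claim and the two hardest cases are left as declarations of intent. The false claim is in your case (2): contractibility of $\A_p(G)$ as a finite space is \emph{not} equivalent to $\O_p(G)\neq 1$. Stong's equivalence \cite{Sto84} holds for $\S_p(G)$, and \cite{MP18} exhibits groups with $\O_p(G)\neq 1$ whose $\A_p(G)$ is not contractible --- this is exactly the content of the remark following Proposition \ref{propOpGcontractible} in the paper. Your ``centralizer fence'' is not even a homotopy of finite-space maps: $E\mapsto C_V(E)$ is order-reversing and $E\mapsto E\,C_V(E)$ is not monotone, so neither is a continuous self-map of the poset. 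What case (2) actually needs is softer and bypasses $\O_p(G)$ entirely: if $\A_p(G)$ is contractible then so is $\A_p(G)'$ by \cite[Corollary 4.18]{BM12} (a genuinely nontrivial theorem, since subdivision does not obviously preserve contractibility of finite spaces), and the orbit space of a contractible finite $G$-space is contractible by \cite[Proposition 8.3.14]{Bar11a} (Remark \ref{remarkXctr}). Your case (1) is likewise off the paper's route and incomplete: the paper uses the dichotomy from the proof of \cite[Proposition 3.2]{MP18} --- either all three posets are contractible, or the Sylow $p$-subgroups intersect trivially, in which case $\A_p(G)$, $\S_p(G)$, $\B_p(G)$ are $G$-homotopy equivalent to the transitive $G$-set $\Syl_p(G)$ and the orbit spaces of the subdivisions collapse easily. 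Your subcase $P\normal G$ happens to be fine (then $\A_p(G)=\A_p(P)$ cones onto $\Omega$ via $E\leq E\Omega\geq\Omega$), but you justify it through the false equivalence, and in the complementary subcase ``solvability makes the fusion explicit enough to build the contraction by hand'' has no mathematical content.

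For cases (3) and (4) you name the crux --- well-definedness of the $\Omega$-insertion on orbits of \emph{chains} --- but do not resolve it, and the naive assignment $(E_0<\cdots<E_k)\mapsto(E_0\Omega\leq\cdots\leq E_k\Omega)$ fails on its face (consecutive terms can collapse) and, more seriously, its well-definedness on orbits is precisely the fusion problem. The paper's proof of (3) (Theorem \ref{theoremLowDifference}) does not cone directly: it first performs a carefully scheduled strong deformation retraction, removing as up and then down beat points, in a specific order, all orbits of chains containing a fully centralized $A$ with $\Omega\nleq A$ and $\overline{A}\nleq\overline{\Omega}$; only on the resulting retract is inserting $\Omega$ well defined, and proving this requires Alperin's Fusion Theorem \ref{theoremAlpFT} inside $C_G(C_s)$, reducing to $k\in N_{C_G(C_s)}(Q)$ with $B,B^k\leq Q$, where $\Omega,\Omega^k\leq Z(Q)$ and maximality force $B=\Omega\Omega^k=B^k$ when $\Omega^k\neq\Omega$. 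Case (4) needs still more machinery absent from your sketch: Lemma \ref{lemmaLowerThan2} (retraction onto the subposet $\P$ of chains whose fully centralized members are comparable with $\Omega$), Lemma \ref{lemmaUniqueMaximal}, the rank computation showing $(A\Omega)^g\Omega$ is maximal of rank $r$ with $C_P(A)=C_P(((A\Omega)^g\Omega)^{g^{-1}x})$, and a second application of Alperin in $C_G(A)$ hinging on $(A\Omega)(A\Omega)^{g_i}=\Omega_1(Z(Q_i))$. In short: your outline locates the obstruction accurately, but where the paper supplies a beat-point extraction schedule plus fusion-theoretic well-definedness arguments, your proposal says ``should define a conical contraction,'' which is exactly the step that needs proof; as it stands only case (5)'s reduction is sound, conditional on (3) and (4).
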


We work with conjugation classes of chains of $p$-subgroups inside the fixed Sylow $p$-subgroup $P$. Therefore, almost all results can be carried out in a general saturated fusion system over $P$. Moreover, our techniques rely on analyzing how we can control the fusion in certain chains of elementary abelian $p$-subgroups of $P$.

\section{Preliminaries on finite spaces and $G$-complexes}

We recall first some basic facts on the homotopy theory of finite topological spaces. For more details, we refer the reader to \cite{Bar11a, McC66, Sto66}.

In what follows, all posets and simplicial complexes considered are assumed to be finite.

For a finite poset $X$ we can study its topological properties by means of its associated simplicial complex $\K(X)$, whose simplices are the non-empty chains of $X$. However, there is an intrinsic topology on $X$ whose open sets are given by the downsets, i.e. the subsets $U$ of $X$ such that $y\leq x$ and $x\in U$ implies $y\in U$. The minimal open sets are the sets $U_x = \{y\in X:y\leq x\}$ for $x\in X$, and they form a basis for this topology. With this topology, $X$ becomes a finite $T_0$-space. Conversely, any finite $T_0$-space $X$ has a natural poset structure by setting $x \leq y$ if the minimum open set containing $x$ is contained in the minimum open set containing $y$. A map between posets is a continuous map if and only if it is an order-preserving map. Moreover, two continuous maps $f,g:X\to Y$ are homotopic (in the classical sense) if and only if there exist continuous maps $f_0,\ldots,f_n:X\to Y$ such that $f_0 = f$, $f_n = g$ and for each $0\leq i < n$, $f_i \leq f_{i+1}$, i.e. $f_i(x)\leq f_{i+1}(x)$ for every $x\in X$, or $f_i \geq f_{i+1}$.

The relation between the topology of $X$ and that of $\K(X)$ is given by McCord's theorem (see \cite{McC66}): there exists a natural weak equivalence $\mu_X:\K(X)\to X$ defined by
\[ \mu_X\left(\sum_{i=0}^n t_i x_i\right) = \max\{x_0,\ldots,x_n\}.\]
Recall that a continuous map between topological spaces is a weak equivalence if it induces isomorphisms between all the homotopy groups, and hence between the homology groups. In particular, $\K(X)$ and $X$ have the same homotopy groups and homology groups. By naturality of the McCord's map, two finite posets are weak equivalent if and only if their associated simplicial complexes are homotopy equivalent. In general, $\K(X)$ and $X$ do not have the same homotopy type, and weak equivalences between finite posets may not be homotopy equivalences. For example, $\A_p(G)\hookrightarrow \S_p(G)$ is a weak equivalence (see \cite{Qui78}) but in general it is not always a homotopy equivalence since they do not have the same homotopy type (see \cite{MP18,Sto84}). In conclusion, the classical theorem of J.H.C. Whitehead is no longer true in the context of finite spaces, and the notion of homotopy equivalence in the context of finite spaces is strictly stronger than the corresponding notion in the context of simplicial complexes. See \cite{Bar11a,MP18} for non-contractible homotopically trivial posets.

The classification of homotopy types of finite spaces can be done combinatorially. This was studied by Stong in a previous article \cite{Sto66}, using  the notion of \it beat point. \rm  An element $x\in X$ is called a {\it down beat point} if $\hat{U}_x=\{ y \in X: y<x\}$ has a maximum, and it is an {\it up beat point} if $\hat{F}_x=\{y\in X: x<y\}$ has a minimum. For $x,y\in X$, we write $x\prec y$ if $x$ is covered by $y$, i.e. if there is no $z\in X$ such that $x < z < y$. If $x$ is a beat point (down or up), the inclusion $X-x\hookrightarrow X$ is a strong deformation retract and conversely, every strong deformation retract is obtained by removing beat points. A space without beat points is called a {\it minimal space}. Removing all beat points of $X$ leads to a minimal space called the \textit{core} of $X$. This core is unique up to homeomorphism, and two finite posets $X$ and $Y$ have the same homotopy type if and only if their cores are homeomorphic. Thus, a finite poset is contractible if and only if its core has a unique point. It is easy to see that a poset with maximum or minimum is contractible.

We recall now some basic facts about actions of finite groups. We refer the reader to \cite[Ch. 3]{Bre72} for more details about actions on simplicial complexes. All the actions considered here are on the right. If a group $G$ acts on a set $A$, then we denote the action of $g\in G$ on the element $a\in A$ by $a^g$. The orbit of an element $a\in A$ will be denoted by $\overline{a}$.

If $G$ is a group and $X$ is a $G$-poset, instead of removing a single beat point $x$, we can remove the orbit $\overline{x}$ and obtain an equivariant strong deformation retract $X-\overline{x}\hookrightarrow X$. It can be shown that if $f:X\to Y$ is an equivariant map which is also a homotopy equivalence, then $f$ is an equivariant homotopy equivalence \cite[Proposition 8.1.6]{Bar11a}. In particular, every $G$-invariant strong deformation retract is an equivariant strong deformation retract, $X$ has a $G$-invariant core, and if, in addition, it is contractible, $X$ has a fixed point.

A $G$-complex is a finite simplicial complex $K$ with an action of $G$ by simplicial automorphisms. Following the terminology of \cite{Bre72}, $K$ is said to satisfy property (A) if whenever $v,v^g$ are two vertices of $K$ in the same simplex, $v  = v^g$. $K$ is said to satisfy property (B) on $H\leq G$ if every time $\{v_0,\ldots,v_n\}$ and $\{v_0^{h_0},\ldots,v_n^{h_n}\}$ are simplices of $K$ with $h_i\in H$, there exists $h\in H$ such that $v_i^{h_i} = v_i^h$ for all $i$. $K$ satisfies property (B) if it satisfies property (B) on $G$, and it is regular if it satisfies property (B) on every $H\leq G$. Clearly property (B) implies property (A), and there are $G$-complexes not satisfying property (A) as the following example shows.

\begin{example}
Let $n$ be a positive integer. Let $K$ be the standard $n$-simplex and  $G$ be the cyclic group of order $n+1$. Then $G$ acts transitively on $K$ by permuting its vertices: if $G = \gen{g}$ and $\{v_0,\ldots,v_n\}$ are the vertices of $K$, define $v_i^g := v_{i+1}\mod n+1$ for $0\leq i\leq n$. Thus, $v_0,v_0^g = v_1$ are in the same simplex but $v_0\neq v_1$.
\end{example}

We denote by $K'$ the barycentric subdivision of the simplicial complex $K$.

\begin{proposition}[See \cite{Bre72}]
If $K$ is a $G$-complex, its barycentric subdivision $K'$ satisfies property (A). If $K$ satisfies property (A), $K'$ satisfies property (B). In particular, for every $G$-complex, its second barycentric subdivision satisfies property (B).
\end{proposition}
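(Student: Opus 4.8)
The plan is to use the explicit combinatorial description of the barycentric subdivision. Recall that the vertices of $K'$ are the simplices of $K$ (identified with their barycenters), and a finite collection of them spans a simplex of $K'$ exactly when it is a chain under the inclusion order. The induced $G$-action sends the barycenter of a simplex $\sigma$ to the barycenter of $\sigma^g$; since each $g$ acts as a simplicial automorphism of $K$, this action preserves dimensions, a fact I will use repeatedly.

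For the first assertion I would argue directly. Suppose $\sigma$ and $\sigma^g$ are two vertices of $K'$ lying in a common simplex of $K'$; then, by the description above, $\sigma$ and $\sigma^g$ are comparable under inclusion. Because the action preserves dimension, $\sigma$ and $\sigma^g$ have the same number of vertices, and two comparable simplices with the same number of vertices must coincide. Hence $\sigma=\sigma^g$, which is precisely property (A) for $K'$.

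For the second assertion, assume $K$ satisfies property (A) and take two simplices $\{\sigma_0,\ldots,\sigma_n\}$ and $\{\sigma_0^{g_0},\ldots,\sigma_n^{g_n}\}$ of $K'$ with $g_i\in G$. After reindexing I may assume $\sigma_0\subsetneq\cdots\subsetneq\sigma_n$, and since dimensions are preserved, the dimensions of the $\sigma_i^{g_i}$ are strictly increasing in $i$; thus in the chain $\{\sigma_0^{g_0},\ldots,\sigma_n^{g_n}\}$ the simplex $\sigma_n^{g_n}$ is the top face and every $\sigma_i^{g_i}$ is a face of it. The candidate witnessing property (B) is $g:=g_n$, and the key step is to check $\sigma_i^{g_n}=\sigma_i^{g_i}$ for all $i$. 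To do so I fix an index $i$ and a vertex $w$ of $\sigma_i$. Since $w\in\sigma_i\subseteq\sigma_n$, both $w^{g_i}\in\sigma_i^{g_i}\subseteq\sigma_n^{g_n}$ and $w^{g_n}\in\sigma_i^{g_n}\subseteq\sigma_n^{g_n}$ are vertices of the single simplex $\sigma_n^{g_n}$ of $K$; moreover $w^{g_n}=(w^{g_i})^{g_i^{-1}g_n}$ is a $G$-translate of $w^{g_i}$. Property (A) for $K$ then forces $w^{g_i}=w^{g_n}$, and as $w$ ranges over the vertices of $\sigma_i$ this yields $\sigma_i^{g_i}=\sigma_i^{g_n}$. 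I expect this vertex-by-vertex application of property (A) to be the crux of the argument: it is exactly what lets a single group element $g_n$ serve for all indices at once, and it breaks down without the hypothesis on $K$. (The same reasoning with the $g_i$ taken in a subgroup $H$ and $g=g_n\in H$ in fact shows that $K'$ is regular, which is stronger than needed here.) Finally, for the concluding statement I would simply combine the two parts: for an arbitrary $G$-complex $K$, the first assertion gives that $K'$ satisfies property (A), and applying the second assertion to $K'$ shows that the second barycentric subdivision $(K')'=K''$ satisfies property (B).
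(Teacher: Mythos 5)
Your proof is correct; the paper itself gives no argument for this proposition (it is quoted from Bredon \cite{Bre72}), and your dimension-preservation argument for property (A) together with the vertexwise application of property (A) to force $\sigma_i^{g_i}=\sigma_i^{g_n}$ is essentially the standard proof from that source. Your parenthetical observation that the same argument with $g_i\in H$ yields property (B) on every subgroup $H$ is also exactly what justifies the paper's subsequent remark that $K''$ is always a \emph{regular} $G$-complex.
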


Note that by the previous proposition, the second barycentric subdivision $K''$ is always a regular $G$-complex.

For a finite poset $X$, its first subdivision is the poset $X'$ of non-empty chains of $X$. Equivalently, $X' = \X(\K(X))$, where for a finite simplicial complex $K$, $\X(K)$ denotes the poset of faces of $K$. Note that $K' = \K(\X(K))$.

\begin{proposition}
Let $X$ be a finite $G$-poset. Then $\K(X)$ is a $G$-complex and it satisfies property (A). In particular, $\K(X)' = \K(X')$ is a regular complex.
\end{proposition}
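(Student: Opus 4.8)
The plan is to verify in turn the three assertions packed into the statement: that $\K(X)$ is a $G$-complex, that it satisfies property (A), and finally that $\K(X)' = \K(X')$ is regular. The first two are direct, and the third is obtained by feeding property (A) into the preceding proposition, applied subgroup by subgroup.

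First I would note that $G$ acts on $X$ by poset automorphisms, so each $g\in G$ is an order-preserving bijection and therefore carries chains to chains. Consequently $g$ induces a simplicial automorphism of $\K(X)$, whose vertices are the elements of $X$ and whose simplices are the non-empty chains; this shows $\K(X)$ is a $G$-complex. For property (A), suppose $v$ and $v^g$ are vertices lying in a common simplex, i.e. in a common chain of $X$. Then $v$ and $v^g$ are comparable, say $v\leq v^g$ (the case $v^g\leq v$ is symmetric). Since $G$ is finite, $g$ has finite order $n$, and repeatedly applying the order-preserving map induced by $g$ gives $v\leq v^g\leq v^{g^2}\leq\cdots\leq v^{g^n} = v$. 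Hence every inequality is an equality, so $v = v^g$. This is the only step where finiteness of $G$ (equivalently, that $g$ has finite order) is used, and it is exactly what makes property (A) automatic for posets.

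To conclude regularity of $\K(X)'$, the key observation is that property (A) is inherited by subgroups: its defining condition quantifies over group elements, so if it holds for the $G$-action it holds a fortiori for the restricted action of any $H\leq G$. Thus $\K(X)$ satisfies property (A) as an $H$-complex for every $H\leq G$, and the preceding proposition (applied to the $H$-action) shows that $\K(X)'$ satisfies property (B) on $H$. Since $H$ is arbitrary, $\K(X)'$ is regular. The identity $\K(X)' = \K(X')$ is the one recorded just before the statement, namely $K' = \K(\X(K))$ together with $X' = \X(\K(X))$.

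The one point requiring care — what I would flag as the main obstacle — is the upgrade from property (B) on $G$ (which the cited proposition delivers directly) to regularity, that is, property (B) on every subgroup. This does not require a further subdivision; it is precisely the remark that property (A) descends to every $H\leq G$, so that the cited result can be applied uniformly to each subgroup. This is the same mechanism underlying the earlier observation that $K''$ is always regular, except that here a single subdivision suffices because $\K(X)$ already satisfies property (A) before subdividing.
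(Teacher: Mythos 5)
Your proof is correct and follows essentially the same route as the paper: the paper's proof likewise observes that the $G$-action sends chains to chains and that two vertices of a common simplex are comparable, at which point it simply cites \cite[Lemma 8.1.1]{Bar11a} for the fact that comparable elements in the same orbit coincide --- the very fact you prove inline via $v\leq v^g\leq \cdots \leq v^{g^n}=v$. Your explicit remark that property (A) restricts to every subgroup $H\leq G$, so that the preceding proposition yields property (B) on each $H$ and hence regularity of $\K(X)'$, is exactly the mechanism the paper leaves implicit (it is the same observation it records after that proposition for $K''$), so there is no gap.
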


\begin{proof}
A simplex in $\K(X)$ is a non-empty chain in $X$ of the form $(x_0 < x_1 < \ldots < x_n)$. Thus, $G$ acts on $\K(X)$ by $(x_0 < x_1 < \ldots < x_n)^g = (x_0^g < x_1^g < \ldots < x_n^g)$. If $x\in \K(X)$ is a vertex and $x$, $x^g$ belong to a same simplex, they are comparable elements of $X$ and therefore equal by \cite[Lemma 8.1.1]{Bar11a}.
\end{proof}

From now on, we will make a distinction between a simplicial complex and its geometric realization. If $K$ is a simplicial complex, we denote by $|K|$ its geometric realization.

If $K$ is a $G$-complex, $|K|$ is a $G$-space. Hence, we can consider $|K|/G$, the orbit space. It has an induced cell structure which makes it a CW-complex. However, this structure may not be a triangulation for $|K|/G$ as the following example shows.

\begin{example}\label{exaFlipS1}
Let $X$ be the finite model of $\SS^1$ with four points. See Figure \ref{fig:Ex10}.

\begin{figure}[H]
    \begin{tikzpicture}
    \draw  (0.25,0.25)--(1.85,1.85);
    \draw  (0,0.25)--(0,1.75);
    \draw  (1.75,0.25)--(0.25,1.75);
    \draw (2,0.25)--(2,1.85);
    \node at (0,0) {$m_0$};
    \node at (0,2) {$M_0$};
    \node at (2,0) {$m_1$};
    \node at (2,2) {$M_1$};
    
    \draw (5.35,0.15)--(4.15,1.35);
    \draw (5.65,0.15)--(6.85,1.35);
    \draw (4.15,1.65)--(5.35,2.85);
    \draw (6.85,1.65)--(5.65,2.85);
    
    \node at (5.5,0) {$m_0$};
    \node at (4,1.5) {$M_0$};
    \node at (7,1.5) {$M_1$};
    \node at (5.5,3) {$m_1$};
	\end{tikzpicture}
\caption{Poset $X$ (left) and complex $\K(X)$ (right).}\label{fig:Ex10}
\end{figure}
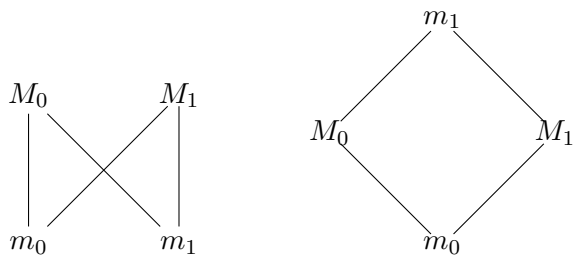
The cyclic group $\ZZ_2$ acts on $X$ by flipping the maximal elements and the minimal elements. The action induced on $|\K(X)|$ is the antipode action on $\SS^1$. The cellular structure induced on $|\K(X)|/\ZZ_2$ has two $0$-cells and two $1$-cells, and it is not a triangulation.

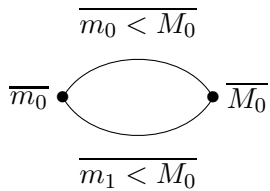
\begin{figure}[H]
\centering
\begin{tikzpicture}
    \draw (0,0) to[out=60,in=120] (2,0);
    \draw (0,0) to[out=-60,in=240] (2,0);
    
	\node at (0,0) {$\bullet$};
	\node at (2,0) {$\bullet$};
    
	\node at (1,1)  {$\overline{m_0 < M_0}$};
	\node at (1,-1)  {$\overline{m_1 < M_0}$};
    
    \node at (-0.45,0) {$\overline{m_0}$};
    \node at (2.45,0) {$\overline{M_0}$};
    
\end{tikzpicture}
\caption{Inherited cellular structure on $|\K(X)|/\ZZ_2$.}\label{fig:Ex12}
\end{figure}
\end{example}

If $K$ is a $G$-complex, the orbit complex $K/G$ is the simplicial complex whose vertices are the orbits of vertices of $K$, and the simplices are the sets of orbits of vertices $ \{\overline{v_0},\ldots,\overline{v_n}\}$ for which there exist representatives $w_i\in \overline{v_i}$ such that $\{w_0,\ldots,w_n\}$ is a simplex of $K$. In that case we say that $\{w_0,\ldots,w_n\}$ is a simplex above $\{\overline{v_0},\ldots,\overline{v_n}\}$. There is a simplicial map $p:K\to K/G$ which takes a vertex $v\in K$ to its orbit $\overline{v} \in K/G$. The following proposition says that for a regular complex $K$, this construction gives a triangulation for $|K|/G$ (see \cite{Bre72}).

\begin{proposition}
If $K$ is a regular $G$-complex, there is a homeomorphism $\varphi_K:|K|/G \to |K/G|$ induced by the quotient map $|K|\to |K|/G$.
\end{proposition}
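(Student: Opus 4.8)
The plan is to realize the simplicial quotient map $p\colon K\to K/G$ geometrically and show it descends to the desired homeomorphism on the orbit space. First I would observe that the geometric realization $|p|\colon|K|\to|K/G|$ is continuous and $G$-invariant: on vertices $p(v^g)=\overline{v^g}=\overline{v}=p(v)$, so $|p|(x^g)=|p|(x)$ for every $x\in|K|$ and $g\in G$. By the universal property of the quotient map $q\colon|K|\to|K|/G$, the map $|p|$ then factors as $|p|=\varphi_K\circ q$ for a unique continuous map $\varphi_K\colon|K|/G\to|K/G|$. Since $|K|/G$ is compact (a continuous image of the compact space $|K|$) and $|K/G|$ is Hausdorff, it suffices to prove that $\varphi_K$ is a bijection, because a continuous bijection from a compact space to a Hausdorff space is automatically a homeomorphism.

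Surjectivity is immediate: by the very definition of $K/G$, every simplex of $K/G$ has a simplex above it in $K$, so $|p|$ is onto and hence so is $\varphi_K$. The heart of the argument is injectivity, and here property (A) and property (B) play complementary roles. The first ingredient is a barycentric-coordinate lemma: if $x$ lies in the interior of a simplex $\sigma=\{v_0,\ldots,v_n\}$ of $K$, say $x=\sum_i t_i v_i$ with all $t_i>0$, then the orbits $\overline{v_0},\ldots,\overline{v_n}$ are pairwise distinct. Indeed, if $\overline{v_i}=\overline{v_j}$ then $v_j=v_i^g$ for some $g\in G$, and since $v_i,v_i^g$ lie in the common simplex $\sigma$, property (A) forces $v_i=v_j$, so $i=j$. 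Consequently $p(\sigma)=\{\overline{v_0},\ldots,\overline{v_n}\}$ is a genuine $n$-simplex of $K/G$ and $|p|$ carries the open face of $\sigma$ bijectively onto that of $p(\sigma)$, preserving barycentric coordinates.

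Now suppose $|p|(x)=|p|(y)$, where $y=\sum_j s_j w_j$ lies in the interior of a simplex $\tau=\{w_0,\ldots,w_m\}$. Since each point of $|K/G|$ lies in the interior of a unique simplex, the previous step gives $p(\sigma)=p(\tau)$, and after indexing the vertices so that $\overline{v_i}=\overline{w_i}$ one finds $m=n$ and $t_i=s_i$ for all $i$. In particular $w_i=v_i^{g_i}$ for suitable $g_i\in G$. At this point property (A) alone does \emph{not} suffice, since the elements $g_i$ need not coincide; this is precisely where regularity enters. Applying property (B) on $G$ to the two simplices $\{v_0,\ldots,v_n\}$ and $\{v_0^{g_0},\ldots,v_n^{g_n}\}=\{w_0,\ldots,w_n\}$ of $K$ yields a single $g\in G$ with $v_i^g=v_i^{g_i}=w_i$ for all $i$. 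Then
\[
x^g=\sum_i t_i\, v_i^g=\sum_i s_i\, w_i=y,
\]
so $x$ and $y$ lie in the same $G$-orbit, i.e.\ $q(x)=q(y)$; this is exactly the injectivity of $\varphi_K$.

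I expect the injectivity — specifically the final passage from a \emph{vertexwise} conjugacy to a \emph{simultaneous} one — to be the main obstacle. Property (A) is what makes $\varphi_K$ well behaved on each open cell and guarantees that barycentric coordinates are preserved, but it is property (B) on $G$ (granted by the regularity hypothesis) that identifies the fibers of $|p|$ with the $G$-orbits and thereby forces $\varphi_K$ to be injective. Everything else is formal once the bijectivity is established, via the compact-to-Hausdorff criterion.
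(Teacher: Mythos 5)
Your proof is correct, and since the paper states this proposition without proof (deferring to Bredon \cite{Bre72}), there is nothing internal to compare against: your argument --- factoring $|p|$ through the quotient, using property (A) to control open cells and property (B) on $G$ to upgrade vertexwise conjugacy to a single $g\in G$, then closing with the compact-to-Hausdorff criterion (valid here as all complexes are finite) --- is precisely the standard proof from the cited reference.
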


In general, there is an induced map $\varphi_K:|K|/G\to |K/G|$ defined by
\[\varphi_K\left(\overline{\sum_i t_i v_i}\right) = \sum_i t_i \overline{v_i}.\]
It is just a continuous surjective map which may not be injective. 

If $X$ is a finite $G$-poset, the orbit space $X/G$ is a poset with the order $\overline{x}\leq \overline{x}'$ if there exist $y\in\overline{x}$ and $y'\in\overline{x}'$ such that $y\leq y'$. We have different orbit spaces arising from $X$, say $\K(X/G)$, $\K(X)/G$ and $|\K(X)|/G$, and we want to study the relationships between them.

\begin{example}
Let $X$ be the poset of Example \ref{exaFlipS1} with $G = \ZZ_2$. Then $X/G = \{\overline{m_0},\overline{M_0}\}$ and $\overline{m_0}<\overline{M_0}$. In particular it is contractible as finite space. The complex $\K(X)/G$ has two vertices $\overline{m_0}$, $\overline{M_0}$ and a single $1$-simplex $\{\overline{m_0},\overline{M_0}\}$. Consequently, $\K(X)/G$ is contractible. Since $|\K(X)|/G \equiv \SS^1$ is not contractible, in general $|K|/G$ and $|K/G|$ do not have the same homotopy type.
\end{example}

The following proposition is an immediate consequence of the definitions of the involved spaces.

\begin{proposition}
Let $X$ be a $G$-poset. Then $\K(X)/G$ is exactly the simplicial complex $\K(X/G)$.
\end{proposition}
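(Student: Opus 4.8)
The plan is to show that the two simplicial complexes $\K(X)/G$ and $\K(X/G)$ coincide on the nose: they have the same vertex set and the same collection of simplices. For the vertices I would simply observe that the vertices of $\K(X)$ are the elements of $X$, so the vertices of $\K(X)/G$ are the orbits $\overline{x}$ with $x\in X$; these are exactly the elements of $X/G$, which are the vertices of $\K(X/G)$. It then remains to match the simplices, i.e. to prove that a set of pairwise distinct orbits $\{\overline{x_0},\ldots,\overline{x_n}\}$ is a simplex of $\K(X)/G$ if and only if it is a chain in $X/G$.

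For the forward implication, suppose $\{\overline{x_0},\ldots,\overline{x_n}\}$ is a simplex of $\K(X)/G$. By definition there are representatives $w_i\in\overline{x_i}$ with $\{w_0,\ldots,w_n\}$ a chain of $X$; after reordering, $w_0<\cdots<w_n$. Since $G$ acts by order-preserving maps, each relation $w_i<w_{i+1}$ gives $\overline{w_i}\leq\overline{w_{i+1}}$ by the very definition of the order on $X/G$, and because the orbits are pairwise distinct each such inequality is strict. Hence $\overline{x_0}<\cdots<\overline{x_n}$ is a chain of $X/G$, that is, a simplex of $\K(X/G)$.

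The reverse implication is the point that requires care and is the main obstacle: a chain $\overline{x_0}<\cdots<\overline{x_n}$ of $X/G$ only asserts, for each consecutive pair, the existence of comparable representatives, and a priori these representatives need not be compatible across different indices. I would remove this defect by constructing a single coherent lift one step at a time, using the group action to align the choices. Choose any $w_0\in\overline{x_0}$. Given a chain $w_0<\cdots<w_k$ with $w_i\in\overline{x_i}$, the relation $\overline{x_k}\leq\overline{x_{k+1}}$ furnishes $c\in\overline{x_k}$ and $d\in\overline{x_{k+1}}$ with $c\leq d$, and distinctness of the orbits forces $c<d$. Writing $c=w_k^{g}$ for a suitable $g\in G$ and applying the order-preserving map $g^{-1}$ yields $w_k=c^{g^{-1}}<d^{g^{-1}}$, so I set $w_{k+1}:=d^{g^{-1}}\in\overline{x_{k+1}}$ without disturbing the already-constructed chain $w_0<\cdots<w_k$. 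Iterating produces a chain $w_0<\cdots<w_n$ in $X$ with $w_i\in\overline{x_i}$, which is precisely a simplex of $\K(X)$ lying above $\{\overline{x_0},\ldots,\overline{x_n}\}$; hence the latter is a simplex of $\K(X)/G$.

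Finally, I would note that the pairwise distinctness of orbits invoked above is exactly property (A) for $\K(X)$, established earlier: two vertices of $\K(X)$ that lie in a common simplex and in the same $G$-orbit must be equal, so within any chain of $X$ all orbits are distinct and the projection $\K(X)\to\K(X)/G$ carries $n$-simplices to $n$-simplices. With both inclusions of simplex sets verified on the common vertex set $X/G$, the two complexes are equal, as claimed.
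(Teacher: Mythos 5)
Your proof is correct, and it matches the paper, which states this proposition without proof as ``an immediate consequence of the definitions'': the forward direction is indeed definitional, and your step-by-step lift of a chain $\overline{x_0}<\cdots<\overline{x_n}$ in $X/G$ to a coherent chain $w_0<\cdots<w_n$ in $X$ (translating each new comparable pair back by $g^{-1}$ so as not to disturb the part already built) is exactly the routine verification the paper leaves to the reader. Your appeal to property (A) of $\K(X)$ for pairwise distinctness of the orbits is legitimate and is what guarantees the quotient map sends $n$-simplices to $n$-simplices, so the two complexes agree dimension by dimension.
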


It is easy to see that McCord's map is equivariant and it induces a continuous map on the quotient spaces $\hat{\mu}_X:|\K(X)|/G\to X/G$. We deduce the following proposition. 

\begin{proposition}\label{propCommutativeSquare}
If $X$ is a $G$-poset, we have a commutative diagram
\[\xymatrix{
|\K(X)|/G \ar[r]^{\hat{\mu}_X} \ar[d]^{\varphi_{\K(X)}} & X/G \\
|\K(X)/G|\ar@{=}[r] & |\K(X/G)| \ar[u]_{\mu_{X/G}}^\weak
}\]
where $\weak$ stands for weak equivalence. In particular, if $\varphi_{\K(X)}$ is an homeomorphism, $\hat{\mu}_X$ is a weak equivalence.
\end{proposition}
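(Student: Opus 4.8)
The plan is to analyze the commutative diagram and chase the maps. We have the McCord weak equivalence $\mu_{X/G}:|\K(X/G)|\to X/G$ already established, and by the preceding proposition $\K(X)/G = \K(X/G)$ as simplicial complexes, so $|\K(X)/G| = |\K(X/G)|$, which justifies the equality on the bottom row. First I would verify that $\hat\mu_X$ is well-defined and continuous: since McCord's map $\mu_X$ is $G$-equivariant (the formula $\mu_X(\sum t_i x_i) = \max\{x_0,\ldots,x_n\}$ commutes with the diagonal $G$-action because taking the maximum of a chain is order-preserving and $G$ acts by poset automorphisms), it descends to a map on orbit spaces $\hat\mu_X:|\K(X)|/G\to X/G$. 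This uses only that a continuous equivariant map induces a continuous map on quotients.

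The core of the argument is checking commutativity of the square. Take a point $\overline{\sum_i t_i v_i}\in |\K(X)|/G$, where $(v_0<\cdots<v_n)$ is a chain in $X$. Going right then down-and-up is not quite the right reading; rather, I would trace the two composites that must agree: the top-then-nothing route gives $\hat\mu_X(\overline{\sum_i t_i v_i}) = \overline{\max\{v_0,\ldots,v_n\}} = \overline{v_n}$ in $X/G$. The other route applies $\varphi_{\K(X)}$ to get $\sum_i t_i \overline{v_i}\in |\K(X/G)|$, and then $\mu_{X/G}$ sends this to $\max\{\overline{v_0},\ldots,\overline{v_n}\}$ in $X/G$. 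So commutativity reduces to the identity $\overline{v_n} = \max\{\overline{v_0},\ldots,\overline{v_n}\}$ in the poset $X/G$. This holds because $v_0<\cdots<v_n$ forces $\overline{v_0}\leq\cdots\leq\overline{v_n}$ under the orbit order (by definition, $\overline{v_i}\leq\overline{v_{i+1}}$ since there exist representatives, namely $v_i,v_{i+1}$, that are comparable), so the orbit chain has maximum $\overline{v_n}$; I would just note that the image of a chain under $p$ is a chain in $X/G$ with the expected top element.

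The one point needing a little care, and the step I expect to be the main obstacle, is that $\sum_i t_i\overline{v_i}$ is a genuine, well-defined point of $|\K(X/G)|$ — that is, that $\{\overline{v_0},\ldots,\overline{v_n}\}$ is actually a simplex of $\K(X/G)$ and that the barycentric coordinates are unambiguous even if some orbits coincide. Since $\{v_0,\ldots,v_n\}$ is a simplex of $\K(X)$ lying above $\{\overline{v_0},\ldots,\overline{v_n}\}$, the latter is indeed a simplex of the orbit complex by definition; and because $v_0<\cdots<v_n$ are distinct comparable elements, their orbits are distinct (two comparable elements in the same orbit would be equal, by \cite[Lemma 8.1.1]{Bar11a}), so no collision of coordinates occurs. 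This is precisely the subtlety illustrated by Example \ref{exaFlipS1}, where the failure of injectivity of $\varphi_{\K(X)}$ is confined to identifications not visible along single chains.

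Finally, the last sentence follows formally: if $\varphi_{\K(X)}$ is a homeomorphism, then in the commutative square $\hat\mu_X = \mu_{X/G}\circ\varphi_{\K(X)}$ is a composite of a homeomorphism with the weak equivalence $\mu_{X/G}$, hence a weak equivalence.
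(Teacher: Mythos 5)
Your proof is correct and follows exactly the route the paper intends: the paper states this proposition as an immediate consequence of the definitions (after noting that McCord's map is equivariant and so descends to $\hat{\mu}_X$), and your point-chase through $\overline{\sum_i t_i v_i}$, together with the observation that comparable elements in the same orbit coincide (\cite[Lemma 8.1.1]{Bar11a}, which the paper itself invokes for property (A)), simply makes those definitional checks explicit. Nothing is missing, and the final deduction that $\hat{\mu}_X = \mu_{X/G}\circ\varphi_{\K(X)}$ is a weak equivalence when $\varphi_{\K(X)}$ is a homeomorphism is exactly the paper's ``in particular'' clause.
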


For a simplicial complex $K$, let $h:|K'|\to |K|$ be the homeomorphism defined by sending a simplex to its barycentre. If $K$ is a $G$-complex, also is $K'$ and $h$ is an equivariant map. In particular $\hat{h}:|K'|/G\to |K|/G$ is a homeomorphism.

The following commutative diagram shows the relationships between all the maps involved. Let $X$ be a $G$-poset and let $K = \K(X)$.

\begin{equation}\label{diagramaEspaciosDeOrbitas}
\xymatrix{
X' \ar[d] & |K'| \ar[l]_{\mu_{X'}}^{\weak} \ar[r]^h_{\equiv} \ar[d] & |K| \ar[d] \ar[r]^{\mu_X}_{\weak} & X \ar[d]\\
X'/G \ar@/_4pc/[dd]^{\alpha} & |K'|/G \ar[l]_{\hat{\mu}_{X'}}^{\weak} \ar[r]^{\hat{h}}_{\equiv} \ar[d]^{\varphi_{K'}}_{\equiv} & |K|/G \ar[r]^{\hat{\mu}_{X}} \ar[d]^{\varphi_K} & X/G\\
|\K(X'/G)| \ar[u]_{\mu_{X'/G}}^{\weak} \ar@{=}[r] & |K'/G| & |K/G| \ar@{=}[r] & |\K(X/G)| \ar[u]_{\mu_{X/G}}^{\weak}\\
(X/G)' & & |\K((X/G)')| \ar[ll]_{\mu_{(X/G)'}}^{\weak} \ar@{=}[r] & |\K(X/G)'| \ar[u]_h^{\equiv}
}
\end{equation}

Since $K = \K(X)$ satisfies (A), $K'$ is regular and $\varphi_{K'}:|K'|/G\to |K'/G|$ is a homeomorphism. In particular, $\hat{h}\circ \varphi_{K'}^{-1}:|K'/G|\to |K|/G$ gives a canonical triangulation for $|K|/G$.

In the diagram we have included the map $\alpha:X'/G\to (X/G)'$ defined by $$\alpha(\overline{(x_0 < x_1 < \ldots < x_n)}) = (\overline{x_0} < \overline{x_1} < \ldots < \overline{x_n}).$$ The following proposition shows that, in a certain way, $\alpha$ is the finite space version of the map $\varphi_K:|K|/G \to |K/G|$. We say that $X$ satisfies (B) if $\K(X)$ does.

\begin{proposition}
The map $\alpha$ is injective if and only if $X$ satisfies property (B). Moreover, if $\alpha$ is injective then it is an isomorphism of posets.
\end{proposition}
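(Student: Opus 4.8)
The plan is to analyze the map $\alpha \colon X'/G \to (X/G)'$ directly from its definition and see exactly where injectivity can fail, matching that failure against property (B). First I would unravel what $\alpha$ does: a point of $X'/G$ is the orbit $\overline{c}$ of a chain $c = (x_0 < x_1 < \ldots < x_n)$ in $X$, and $\alpha$ sends it to the chain $(\overline{x_0} < \overline{x_1} < \ldots < \overline{x_n})$ in the orbit poset $X/G$. So $\alpha(\overline{c}) = \alpha(\overline{d})$ for chains $c = (x_0 < \ldots < x_n)$ and $d = (y_0 < \ldots < y_m)$ precisely when these two chains in $X/G$ coincide. Since a chain in $X/G$ records both its length and its ordered sequence of orbits, equality forces $n = m$ and $\overline{x_i} = \overline{y_i}$ for every $i$; that is, $y_i = x_i^{g_i}$ for some group elements $g_i \in G$. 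Thus the question of injectivity reduces exactly to: whenever the ordered orbit-sequences agree, must the two chains $c$ and $d$ be in the same $G$-orbit, i.e. does there exist a single $g$ with $y_i = x_i^g$ for all $i$?

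Next I would recognize that this is verbatim the statement of property (B) for the complex $\K(X)$. Indeed, a chain $c = (x_0 < \ldots < x_n)$ is exactly a simplex $\{x_0, \ldots, x_n\}$ of $\K(X)$, and the vertices of a chain are automatically distinct and totally ordered, so the orbit-matching data $y_i = x_i^{g_i}$ says the simplices $\{x_0,\ldots,x_n\}$ and $\{x_0^{g_0},\ldots,x_n^{g_n}\}$ of $\K(X)$ represent the same ordered orbit-sequence. Property (B) (on $G$) asserts precisely that in this situation there is a single $h \in G$ with $x_i^{h} = x_i^{g_i}$ for all $i$, i.e. $d = c^{h}$. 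So $\alpha$ is injective if and only if whenever two chains map to the same chain of orbits they are $G$-conjugate, which is exactly property (B). The one bookkeeping point to be careful about here is the passage between the set-formulation of a simplex and the ordered-chain formulation: since $\K(X)$ satisfies property (A) by the earlier proposition, vertices in the same simplex lying in one orbit must be equal, so the ordering of a chain is forced by the order relation on $X$ and there is no ambiguity in how the orbits are indexed. This is the step I expect to require the most care, though it is not deep: it is really just verifying that the two descriptions of property (B) line up with the two descriptions of $\alpha$-injectivity.

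Finally I would prove the ``moreover'' clause. Assuming $\alpha$ injective, I first note it is surjective: any chain $(\overline{x_0} < \ldots < \overline{x_n})$ in $X/G$ lifts, by definition of the order on $X/G$ and an inductive choice of representatives, to an honest chain in $X$, whose orbit maps onto it. Order-preservation of $\alpha$ is immediate from the definition of the orders on $X'/G$ and on $(X/G)'$. To upgrade a bijective order-preserving map to a poset isomorphism I must show the inverse is order-preserving, equivalently that $\alpha$ reflects the order: if $\alpha(\overline{c}) \leq \alpha(\overline{d})$ then $\overline{c} \leq \overline{d}$. Here $\alpha(\overline{c}) \leq \alpha(\overline{d})$ means the chain of orbits of $c$ is a subchain of the chain of orbits of $d$; I would pull this back, using injectivity together with property (B) to realize the relevant sub-chain of $d$ as a single $G$-conjugate of $c$, thereby exhibiting $c$ as (conjugate to) a subchain of $d$ and hence $\overline{c} \leq \overline{d}$ in $X'/G$. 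The crux is again property (B): it is what lets one replace the chain-by-chain conjugacies $g_i$ by a uniform $g$, which is what turns the merely set-theoretic matching of orbits into an actual order relation among orbits of chains.
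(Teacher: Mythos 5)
Your proof is correct and takes essentially the same approach as the paper: the injectivity statement is treated as a direct restatement of property (B) for chains of $\K(X)$, and the ``moreover'' part is proved by pulling back $\alpha(\overline{c})\leq\alpha(\overline{d})$ to elementwise conjugations $x_i^{g_i}=y_{\tau(i)}$ and invoking property (B) to replace the $g_i$ by a single $g$, giving $c^g\leq d$. Your explicit lifting argument for surjectivity is left implicit in the paper, but it is the same standard observation.
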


\begin{proof}
The first part is an easy restatement of property (B). To see the moreover part, take $c = (x_0 < \ldots < x_n)$ and $d = (y_0 < \ldots < y_m)$ to be two chains in $X'$ such that $\alpha(\overline{c})\leq \alpha(\overline{d})$. We have to see that $c^g \leq d$ for some $g\in G$. By injectivity we may assume that $n < m$. Since $(\overline{x_0} < \ldots < \overline{x_n}) \leq (\overline{y_0} < \ldots < \overline{y_m})$, there exists an injective order-preserving map $\tau:\{0,\ldots,n\}\to \{0,\ldots,m\}$ and elements $g_i \in G$ with $x_i^{g_i} = y_{\tau(i)}$. Thus, $y_{\tau(0)} < y_{\tau(1)} < \ldots < y_{\tau(n)} = x_0^{g_0}< x_1^{g_1} <\ldots < x_n^{g_n}$ is a chain of $X$. By property (B), there exists $g\in G$ such that $x_i^g = x_i^{g_i}$ for each $0\leq i\leq n$. Therefore, $$c^g = (x_0 < x_1 < \ldots < x_n) ^g = (x_0^{g_0} < x_1^{g_1} <\ldots < x_n^{g_n}) \leq (y_0 < y_1 < \ldots < y_m) = d.$$
\end{proof}

We denote by $X^{(n)}$ the $n$-th iterated subdivision of the poset $X$. We deduce the following corollaries.

\begin{corollary}\label{coroAllEquivalent}
If $X$ is a $G$-poset, for all $n\geq 1$ there is an isomorphism of posets $X^{(n)}/G \equiv (X'/G)^{(n-1)}$. If $X$ satisfies property (B), $X^{(n)}/G\equiv (X/G)^{(n)}$ for all $n\geq 0$.
\end{corollary}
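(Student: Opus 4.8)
The plan is to bootstrap everything from the preceding proposition on the map $\alpha$, whose content is precisely that $Z'/G \equiv (Z/G)'$ as posets whenever the $G$-poset $Z$ satisfies property (B). The one extra ingredient I need is that \emph{every} first subdivision satisfies property (B): indeed, for any $G$-poset $W$ the complex $\K(W') = \K(W)'$ is regular by the earlier proposition, hence satisfies property (B) on $G$, and so $W'$ satisfies (B) by our convention that $X$ satisfies (B) iff $\K(X)$ does. In particular $X^{(n)} = (X^{(n-1)})'$ satisfies property (B) for every $n \geq 1$, and this is exactly what will allow me to reapply the $\alpha$-isomorphism at each stage, even when $X$ itself does not satisfy (B).

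I would prove the second (conditional) statement first, by induction on $n$. The case $n = 0$ is the tautology $X/G = (X/G)^{(0)}$. For the inductive step, assume $X^{(n)}/G \equiv (X/G)^{(n)}$. Since $X^{(n+1)} = (X^{(n)})'$, I apply the $\alpha$-proposition to $Z = X^{(n)}$: this is legitimate because $Z$ satisfies property (B) (either by the standing hypothesis on $X$ when $n = 0$, or by the observation above when $n \geq 1$), giving $(X^{(n)})'/G \equiv (X^{(n)}/G)'$. Applying the subdivision functor $(-)'$ to the inductive hypothesis yields $(X^{(n)}/G)' \equiv ((X/G)^{(n)})' = (X/G)^{(n+1)}$, and composing the two isomorphisms closes the induction.

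For the first (unconditional) statement I would simply apply the second statement to the auxiliary poset $Y := X'$. By the observation above $Y$ satisfies property (B), so $Y^{(m)}/G \equiv (Y/G)^{(m)}$ for all $m \geq 0$. Rewriting $Y^{(m)} = (X')^{(m)} = X^{(m+1)}$ and $Y/G = X'/G$, and setting $n = m+1$, gives $X^{(n)}/G \equiv (X'/G)^{(n-1)}$ for every $n \geq 1$, as desired.

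I do not expect a serious obstacle here: the argument is essentially formal once the right objects are identified. The step I would flag as the crux is recognizing that a single subdivision \emph{unconditionally} forces property (B), so that the $\alpha$-isomorphism may be reapplied throughout the induction regardless of whether the starting poset satisfies (B); this is what separates the unconditional first statement from the conditional second one. The remainder is bookkeeping — keeping the indexing of iterated subdivisions straight and noting that the $\alpha$-isomorphisms compose with further subdivisions, which is immediate since $(-)'$ is functorial on posets.
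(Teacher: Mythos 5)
Your proof is correct and rests on the same two ingredients as the paper's own argument: the observation that any first subdivision $W'$ satisfies property (B) (because $\K(W') = \K(W)'$ is regular), together with an induction that reapplies the $\alpha$-isomorphism $Z'/G \equiv (Z/G)'$ at each stage. The only difference is organizational — the paper inducts directly on the unconditional statement and then deduces the conditional one by one extra application of $\alpha$ to $X$ itself, whereas you induct on the conditional statement and recover the unconditional one by the substitution $Y = X'$; the two reductions are immediate and the underlying argument is the same.
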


\begin{proof}
Note that $X^{(n)}$ satisfies property (B) for $n\geq 1$. Assume $n\geq 2$. By the previous proposition, $X^{(n)}/G\equiv (X^{(n-1)}/G)'$, and by induction, $X^{(n-1)}/G\equiv (X'/G)^{(n-2)}$. Thus, $X^{(n)}/G\equiv ((X'/G)^{(n-2)})' \equiv (X'/G)^{(n-1)}$.

If $X$ satisfies property (B) then $X'/G\equiv (X/G)'$ and $X^{(n)}/G\equiv (X'/G)^{(n-1)}\equiv (X/G)^{(n)}$ for $n\geq 0$.
\end{proof}

\begin{corollary}\label{coroAllContractible}
For a $G$-poset $X$ and $n\geq 1$, $X^{(n)}/G$ is contractible if and only if $X'/G$ is contractible. If $X$ satisfies (B), $X^{(n)}/G$ is contractible if and only if $X/G$ is contractible.
\end{corollary}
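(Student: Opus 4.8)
The plan is to deduce the statement entirely from Corollary \ref{coroAllEquivalent} together with one purely finite-space fact: a finite poset $W$ is contractible (as a finite space) if and only if its subdivision $W'$ is contractible. Granting this, first I would record its iterated form, namely that by induction on $m \geq 0$ the space $W^{(m)}$ is contractible if and only if $W$ is. The two assertions then follow by substitution. For the first, Corollary \ref{coroAllEquivalent} gives $X^{(n)}/G \equiv (X'/G)^{(n-1)}$, so setting $W = X'/G$ we get that $X^{(n)}/G$ is contractible if and only if $(X'/G)^{(n-1)}$ is, which by the iterated fact holds if and only if $X'/G$ is contractible. For the second, property (B) yields $X^{(n)}/G \equiv (X/G)^{(n)}$, and taking $W = X/G$ shows $X^{(n)}/G$ is contractible if and only if $X/G$ is. The cases $n = 1$ (resp. $m = 0$) are trivial, so the induction only carries content for genuinely iterated subdivisions.

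Everything therefore rests on the lemma that contractibility of finite spaces is invariant under subdivision, and this is where I expect the real work to lie. The subtlety, emphasized in Section 2, is that for finite spaces contractibility is strictly stronger than homotopy triviality: since $|\K(W')| = |\K(W)'|$ is homeomorphic to $|\K(W)|$, subdivision manifestly preserves the weak homotopy type, but this says nothing about the genuine (Stong) homotopy type. To prove the lemma I would argue through the beat-point calculus. A clean special case is instructive: if $W$ has a maximum $w$, then the order-preserving map $C \mapsto C \cup \{w\}$ on $W'$ satisfies $\id_{W'} \leq (C \mapsto C \cup \{w\}) \geq \cte_{\{w\}}$, exhibiting $W'$ as contractible. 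The general forward implication I would obtain by induction on $|W|$: a contractible $W$ with more than one point has a beat point $z$, and the core idea is that removing $z$ is mirrored by a strong deformation retraction $W' \searrow (W \setminus z)'$, after which the inductive hypothesis applies to the contractible space $W \setminus z$.

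The hard part will be precisely this compatibility between beat-point removal and subdivision: one must check that, after deleting the chains of $W$ passing through $z$, the resulting retraction of $W'$ onto $(W \setminus z)'$ is realized by a sequence of beat-point removals, which requires tracking when the chains through $z$ acquire a maximum or minimum once the maximum of $\hat{U}_z$ (or minimum of $\hat{F}_z$) is accounted for. The converse implication is more delicate still, since a contraction of $W'$ need not descend to any map defined on $W$; I would handle it by the same beat-point reduction, comparing $W$ with its core and reducing to the statement that the subdivision of a minimal space with more than one point is never contractible. As both halves are standard in the homotopy theory of finite spaces, in practice I would cite \cite{Bar11a} for the full equivalence and present only the reduction from Corollary \ref{coroAllEquivalent} in detail.
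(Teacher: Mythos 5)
Your proposal is correct and is essentially the paper's own proof: the paper likewise combines Corollary \ref{coroAllEquivalent} with the cited fact that a finite space is contractible if and only if its subdivision is, and your beat-point sketch (whose key claim, that removing a beat point $z$ induces a strong deformation retraction $W'\searrow (W\setminus z)'$, is precisely the nontrivial content of that fact) is not needed once the citation is invoked. Note only that the correct reference for the subdivision-invariance of contractibility is \cite[Corollary 4.18]{BM12} rather than \cite{Bar11a}.
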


\begin{proof}
By \cite[Corollary 4.18]{BM12}, a finite space $X$ is contractible if and only if its subdivision $X'$ is contractible. The result now follows from the corollary above and \cite[Corollary 4.18]{BM12}.
\end{proof}

We consider the action of $G$ by right conjugation on the posets of $p$-subgroups. That is, $A^g = g^{-1}Ag$ for $A\leq G$ and $g\in G$. The following example taken from \cite{Smi11} shows that $\S_p(G)$ may not satisfy property (B). Denote by $Z(G)$ the center of the group $G$ and by $N_G(A)$ the normalizer in $G$ of the subset $A\subseteq G$.

\begin{example}\label{exampleNoB}
Let $G = \SS_4$, the symmetric group on four letters, and let $X = \S_2(G)$. A Sylow $2$-subgroup of $G$ is $D= \gen{(1\, 3), (1\, 2\, 3\, 4)}\simeq D_8$. The elements $(1\, 3)(2\, 4)$ and $(1\, 2)(3\, 4)$ belong to $D$ and they are conjugated by $(2\, 3)\in G$. In this way, we have two different subgroups $H_1 = \gen{(1\, 3)(2\, 4)}$ and $H_2= \gen{(1\, 2)(3\, 4)}$ which determine the same point in $X/G$.

Take the chains $(H_1 < D)$ and $(H_2 < D)$. We affirm they have different orbits. Since $Z(D) = H_1$, if $(H_1 < D)^g = (H_2 < D)$, then $g\in N_G(D)\leq N_G(Z(D)) = N_G(H_1)$ and $H_2 = H_1^g = H_1$, a contradiction.
\end{example}

In the next section, we will show that $\S_p(G)'/G$ may not be contractible, although $\S_p(G)/G$ always is.

\section{Reformulation of Webb's conjecture and a stronger conjecture}

In \cite{Web87} P. Webb conjectured that $|\K(\S_p(G))|/G$ is contractible. Since the first proof of this conjecture due to P. Symonds (see  \cite{Sym98}), there have been various proofs and generalizations of this conjecture involving fusion systems and Morse Theory (see \cite{Bux99, Lib08, Lin09}). In all these articles, the authors work with the homotopy type of the orbit space $|K|/G$ for some simplicial complex $K$ $G$-homotopy equivalent to $\K(\S_p(G))$. For example, Symonds and Bux proved that $|\R_p(G)|/G$ is contractible. Here $\R_p(G)$ is the subcomplex of $\K(\S_p(G))$ consisting of chains $(P_0 <  P_1 <\ldots < P_n)$ with $P_i\normal P_n$ for all $i$. In \cite{TW91} it is proved that $\R_p(G)$ is $G$-homotopy equivalent to $\K(\S_p(G))$.

In this section we reformulate Webb's conjecture in terms of finite spaces by using the results of the previous section.

\begin{proposition}\label{propWebbConjReformulated}
(Webb's Conjecture) If $K\subseteq \K(\S_p(G))$ is a $G$-invariant subcomplex which is $G$-homotopy equivalent to $\K(\S_p(G))$, the finite space $\X(K)/G$ is homotopically trivial. In particular, it holds for $K \in\{ \K(\S_p(G)), \K(\A_p(G)), \K(\B_p(G)), \R_p(G)\}$.
\end{proposition}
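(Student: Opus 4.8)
The plan is to identify the finite space $\X(K)/G$, up to weak equivalence, with the CW-complex $|K|/G$, and then to invoke the topological form of Webb's theorem. The whole argument is an assembly of the orbit-space machinery of Section 2, most of it already encoded in diagram~(\ref{diagramaEspaciosDeOrbitas}); no homotopy-theoretic input is needed beyond the contractibility of $|K|/G$.

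First I would check that every $G$-invariant subcomplex $K\subseteq\K(\S_p(G))$ satisfies property (A). The complex $\K(\S_p(G))$ is the order complex of the $G$-poset $\S_p(G)$, hence satisfies property (A); and the defining condition of (A) is tested only on pairs of vertices lying in a common simplex, so it passes to every subcomplex. Consequently $K'$ satisfies property (B) and is a regular $G$-complex, which yields a homeomorphism $\varphi_{K'}\colon|K'|/G\to|K'/G|$. Composing its inverse with the homeomorphism $\hat h\colon|K'|/G\to|K|/G$ induced by the barycentre map produces a homeomorphism $|K'/G|\cong|K|/G$.

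Next I would identify the order complex of the orbit poset. Applying the identity $\K(X)/G=\K(X/G)$ to $X=\X(K)$ and using $\K(\X(K))=K'$, I obtain $\K(\X(K)/G)=K'/G$. Hence McCord's theorem supplies a weak equivalence $|K'/G|=|\K(\X(K)/G)|\to\X(K)/G$, and combining this with the homeomorphism $|K'/G|\cong|K|/G$ of the previous step shows that $\X(K)/G\weak|K|/G$. Finally, Webb's theorem (Symonds \cite{Sym98}, Bux \cite{Bux99}) gives that $|\K(\S_p(G))|/G$ is contractible; since a $G$-homotopy equivalence induces a homotopy equivalence on orbit spaces, $|K|/G$ is contractible for every $K$ that is $G$-homotopy equivalent to $\K(\S_p(G))$, and therefore $\X(K)/G$ is weakly equivalent to a contractible space and so is homotopically trivial. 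The ``in particular'' clause then follows by recalling that $\K(\A_p(G))$, $\K(\B_p(G))$ and $\R_p(G)$ are each $G$-homotopy equivalent to $\K(\S_p(G))$ (see \cite{Qui78,Bou84,TW91}).

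I do not expect a genuine obstacle: all the real content is concentrated in Webb's theorem, which is assumed, and the rest is the bookkeeping of Section 2. The one mild subtlety worth flagging is that $K$ itself need only satisfy the weaker property (A) rather than (B)---passing to the subdivision $K'$ upgrades (A) to (B), which is exactly what makes $\varphi_{K'}$ a homeomorphism. Finally, the reason the conclusion is only \emph{homotopically trivial} and not contractible as a finite space is that McCord's map is merely a weak equivalence, which need not preserve the strictly finer finite-space homotopy type; upgrading this to genuine finite-space contractibility is precisely the stronger statement proposed afterwards.
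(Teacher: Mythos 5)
Your proposal is correct and follows essentially the same route as the paper: pass to the subdivision $K'$, which is a regular $G$-complex since $K$ inherits property (A), use the homeomorphisms $\varphi_{K'}$ and $\hat h$ together with McCord's map (this is exactly the content of Proposition~\ref{propCommutativeSquare} applied to $X=\X(K)$) to get $\X(K)/G\weak |K|/G$, and conclude via the Symonds--Bux theorem and the fact that $G$-homotopy equivalences descend to orbit spaces. Your only additions are making explicit two points the paper leaves implicit, namely that property (A) passes to $G$-invariant subcomplexes and the references for the ``in particular'' clause.
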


Recall that $\B_p(G) = \{P\in \S_p(G) : P = \O_p(N_G(P))\}$,  where $\O_p(G)$ is the largest normal $p$-subgroup of $G$. The elements of $\B_p(G)$ are usually called \textit{$p$-radical subgroups} of $G$.

\begin{proof}
Since $K$ satisfies property (A), $K'$ is regular. By Proposition \ref{propCommutativeSquare}, the map $\hat{\mu}_{\X(K)}:|K'|/G \to \X(K)/G$ is a weak equivalence. Therefore, $\X(K)/G \approx_w |K'|/G \equiv |K|/G \simeq |\K(\S_p(G))|/G$. By the original Webb's conjecture, $|\K(\S_p(G))|/G$ is contractible and therefore, $\X(K)/G$ is homotopically trivial.
\end{proof}

In the context of finite spaces, being contractible is strictly stronger than being homotopically trivial. Hence, we could ask if $\X(K)/G$ is in fact contractible, if $K$ is one of the complexes $\K(\S_p(G))$, $\K(\A_p(G))$, $\K(\B_p(G))$ or $\R_p(G)$. The following examples show that it fails for $K = \K(\S_p(G))$ or $\K(\B_p(G))$.

\begin{example}\label{exampleSpG'/GnotContractible}
Let $G = \AA_6$ and $p = 2$. Then $\S_p(G)'/G$ is not a contractible finite space. The smallest example we have found for which $\S_p(G)'/G$ is not contractible is $G = \PSL(3,2)$, for $p = 2$.
\end{example}

In general, the poset $\B_p(G)'/G$ is not contractible but the example is much larger than that for $\S_p(G)$.

\begin{example}\label{exampleBpG'/GnotContractible}
Let $G$ be the transitive group of degree 26 and number 62 in the library of transitive groups of GAP. This group can be described as a semidirect product of a non-split extension of $\PSL(2,25)$ by $\ZZ_2$, by $\ZZ_2$, i.e. $G \simeq (\PSL(2,25).\ZZ_2)\rtimes \ZZ_2$. Here, the dot denotes non-split extension. Its order is $|G| = 2^5.3.5^2.13 = 31200$.

We have computed the poset $\B_p(G)'/G$ by using SageMath. Fix $P\in \Syl_p(G)$. Then $\B_p(G)/G = \{\overline{P},\overline{Q},\overline{R},\overline{A},\overline{B}\}$ with $A,B,R,Q\leq P$ $p$-radical subgroups of $G$ inside $P$. For particular $g,h\in G$, we have that $R\neq R^g$, $B\neq B^h$ and $R^g,B^h\leq P$. See Figure \ref{fig:Ex21} for the Hasse diagram of the poset. By calculating its core, it can be shown that the finite space $\B_p(G)'/G$ is not contractible.

\begin{figure}
\centering
\begin{tikzpicture}

    \draw (-0.25,0.25)--(-1.60,1.8);
    \draw (0.05,0.25)--(0,1.8);
    \draw (0.10,0.25)--(1.65,1.8);
    \draw (0.15,0.25)--(6.6,1.8);
    \draw (0.35,0.20)--(8.2,1.8);
    
	\draw (1.85,0.25)--(-1.3,1.8);
	\draw (2.15,0.25)--(10.5,1.8);

	\draw (3.85,0.25)--(0.40,1.8);
	\draw (3.95,0.25)--(3.25,1.8);
	\draw (4.15,0.25)--(5.1,1.8);
	
	\draw (5.85,0.25)--(1.85,1.8);
	
	\draw (7.75,0.15)--(3.35,1.8);
	\draw (7.80,0.20)--(5.4,1.8);
	\draw (7.85,0.25)--(7,1.8);
	\draw (8,0.25)--(8.5,1.8);
	\draw (8.15,0.25)--(10.8,1.8);
	
	\draw (-1.25,2.25)--(5.35,3.8);	
	
	\draw (0.15,2.25)--(-1,3.8);
	\draw (0.35,2.25)--(2.35,3.8);
	\draw (0.5,2.25)--(9.2,3.8);
	
	
	\draw (3.35,2.25)--(-0.75,3.8);
	\draw (3.55,2.25)--(10,3.8);
	
	\draw (5.3,2.25)--(2.55,3.8);
	
	\draw (6.8,2.25)--(-0.15,3.8);
	\draw (7,2.25)--(2.75,3.8);
		
	\draw (8.60,2.25)--(6,3.8);
	\draw (8.85,2.25)--(10.3,3.8);
	
	\draw (10.15,2.25)--(6.55,3.8);
    
	\node at (0,0) {$\overline{(P)}$};
	\node at (2,0) {$\overline{(Q)}$};
	\node at (4,0) {$\overline{(R)}$};
	\node at (6,0) {$\overline{(A)}$};
	\node at (8,0) {$\overline{(B)}$};
	
	\node at (-1.75,2) {$\overline{(Q<P)}$};
	\node at (0,2) {$\overline{(R<P)}$};
    \node at (1.75,2) {$\overline{(A<P)}$};
    \node at (3.5,2) {$\overline{(B<R^g)}$};
    \node at (5.25,2) {$\overline{(B<R)}$};
    \node at (7,2) {$\overline{(B<P)}$};
    \node at (8.75,2) {$\overline{(B^h<P)}$};
    \node at (10.5,2) {$\overline{(B^h<Q)}$};
   
    \node at (-1,4) {$\overline{(B<R^g<P)}$};
    \node at (2.5,4) {$\overline{(B<R<P)}$};
    \node at (6,4) {$\overline{(B^h<Q<P)}$};
    \node at (9.5,4) {$\overline{(B^h<R<P)}$};

\end{tikzpicture}
\caption{Finite space $\B_p(G)'/G$.}\label{fig:Ex21}
\end{figure}
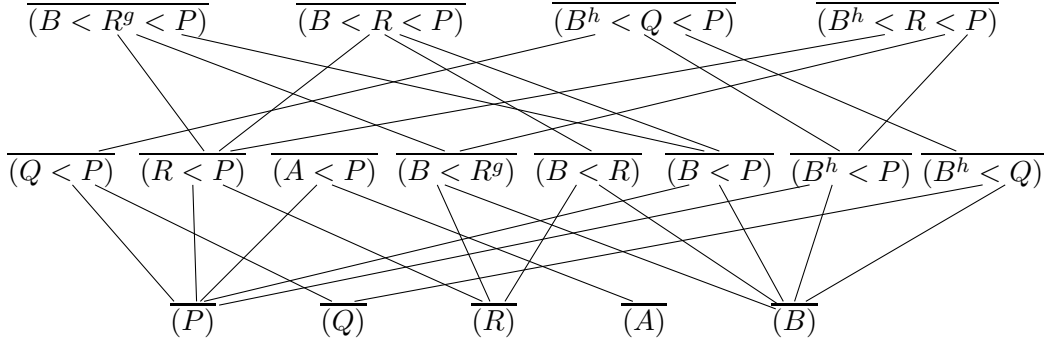
\end{example}

As we have said before, no example of a non-contractible poset $\A_p(G)'/G$ has been found. Recall that Webb's conjecture asserts that $\A_p(G)'/G$ is a homotopically trivial finite space. In Section 5 we prove some particular cases in which it is contractible by using both fusion and finite spaces. We believe that this stronger property holds in general. 

\begin{conjecture}\label{strongerConjecture}
The poset $\A_p(G)'/G$ is a contractible finite space.
\end{conjecture}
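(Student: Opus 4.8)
The plan is to produce a conical contraction of the orbit poset $\A_p(G)'/G$ to a single point, mirroring at the level of chains the argument that already shows $\A_p(G)/G$ is conically contractible (Theorem \ref{theoremApGGcontractil}). Fix $P\in\Syl_p(G)$ and set $\Omega=\Omega_1(Z(P))$, the central elementary abelian subgroup used in that proof. The central observation is the ``$\Omega$-closure'' $\nu\colon\A_p(P)\to\A_p(P)$, $\nu(E)=E\Omega$: since $\Omega$ is central in $P$, $E\Omega$ is again elementary abelian, $\nu$ is order preserving and idempotent, and $E\le\nu(E)\ge\Omega$ for every $E$. Non-equivariantly this already contracts $\A_p(P)$ to $\Omega$ through the zig-zag $\id\le\nu\ge c_\Omega$, and hence $\A_p(P)'$ is contractible. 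The whole difficulty is to carry this coherently across $G$-conjugacy classes of chains.

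First I would record the easy half of a dichotomy. By Theorem \ref{theoremApGGcontractil} the poset $\A_p(G)/G$ is conically contractible, so it is contractible, and since a finite space is contractible if and only if its subdivision is (the fact underlying Corollary \ref{coroAllContractible}), the poset $(\A_p(G)/G)'$ is contractible too. If $\A_p(G)$ happened to satisfy property (B), then the proposition characterising injectivity of the comparison map $\alpha\colon\A_p(G)'/G\to(\A_p(G)/G)'$ would make $\alpha$ an isomorphism of posets, and the conjecture would follow at once. Thus the genuine content lies exactly where property (B) fails---the phenomenon already visible for $\S_p(G)$ in Example \ref{exampleNoB}---so that $\alpha$ has non-trivial fibres and $\A_p(G)'/G$ is strictly larger than $(\A_p(G)/G)'$.

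For the general case I would fix fully centralized representatives of $\F$-conjugacy classes, where $\F=\F_P(G)$ is the fusion system of $G$ on $P$, using that for a fully centralized $E$ one has $\Omega\le C_P(E)$, so $E\Omega$ is elementary abelian and may again be taken fully centralized. The plan is to define $\Phi\colon\A_p(G)'/G\to\A_p(G)'/G$ on the class of a chain $(E_0<\cdots<E_n)$, chosen inside $P$ with fully centralized entries, by passing to the strictly increasing chain underlying $(\Omega\le E_0\Omega\le\cdots\le E_n\Omega)$, and then to prove that $\Phi$ is well defined on $G$-classes, order preserving, and sits in a zig-zag $\id\preceq\Phi\succeq c_{\overline{(\Omega)}}$ at the level of orbits, contracting everything to the class $\overline{(\Omega)}$ of the one-term chain $(\Omega)$. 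To make this a legitimate comparison of poset maps I would interpolate through the orbit of the refined chain $\{\Omega\}\cup\{E_i\}\cup\{E_i\Omega\}$ whenever these subgroups are totally ordered, reducing the construction to fusion-theoretic control of how $G$-conjugacies between chains interact with the closure $\nu$.

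The main obstacle---and the reason the statement is only a conjecture---is precisely this compatibility, where two difficulties compound. First, $\nu$ does not respect chains on the nose: $E_i<E_{i+1}$ does not force $E_i\Omega\le E_{i+1}$ unless $\Omega\le E_{i+1}$, so the refined set $\{E_i,E_i\Omega\}$ need not be a chain and the interpolation above can break. Second, and more seriously, when property (B) fails the $G$-fusion of a chain is not controlled by $N_G(P)$, whereas $\Omega$ is canonically attached only to $P$; two $G$-conjugate chains fully centralized in $P$ may be identified by an automorphism of $\F$ that does not normalise $\Omega$, so $\Phi$ need not descend to orbits without an additional fusion argument (for instance via Alperin's fusion theorem, reducing conjugacy of chains to normalisers of fully normalized subgroups, where $\Omega$-closure is visibly preserved). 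The special cases of the summarising theorem---abelian Sylow subgroups, small $p$-rank relative to $r_p(\Omega)$, and $|G|_p\le p^4$---are exactly the situations in which this fusion can be tamed and the contraction $\Phi$ made coherent; extending such control to an arbitrary saturated fusion system over $P$ is the crux that remains open.
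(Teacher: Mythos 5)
You should first note a mismatch of genre: the statement you were asked to prove is Conjecture \ref{strongerConjecture}, which the paper itself leaves \emph{open} --- there is no proof in the paper to compare against, only the partial results of Section 5 (Proposition \ref{propMxlRank}, Theorems \ref{theoremLowDifference} and \ref{theoremPDifferencesLower}, and the corollary for $|P|\leq p^4$). Your proposal, to its credit, does not claim to close the conjecture; as a program it reproduces the paper's actual strategy quite faithfully. Your preliminary reduction is the same as the paper's: the remark after Theorem \ref{theoremApGGcontractil} observes exactly that property (B) for $\A_p(G)$ would give $\A_p(G)'/G \equiv (\A_p(G)/G)'$, contractible via Theorem \ref{theoremApGGcontractil} and \cite[Corollary 4.18]{BM12}, and that Example \ref{exampleNoB} shows one cannot count on (B). Your $\Omega$-closure $\nu(E) = E\Omega$ with fully centralized representatives is likewise the engine of Section 5 (compare Lemma \ref{lemmaFCOmega}, which is precisely your claim that $E\Omega$ can again be taken fully centralized), and your appeal to Alperin's fusion theorem to control conjugacies between chains is how the paper argues in Theorems \ref{theoremLowDifference} and \ref{theoremPDifferencesLower}.

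The genuine gap is the one you name, but you are too optimistic in your parenthetical that $\Omega$-closure is ``visibly preserved'' after reducing to normalizers: for $g \in N_{C_G(C_s)}(Q)$ with $B, B^g \leq Q \leq P$, nothing forces $\Omega^g = \Omega$, and the paper's escape route is not formal. When $\Omega \neq \Omega^g$ the paper uses $\Omega, \Omega^g \leq \Omega_1(Z(Q))$ and then a \emph{rank or order count} --- e.g.\ $B = \Omega\Omega^g = B^g$ in Theorem \ref{theoremLowDifference}, or $(A\Omega)(A\Omega)^{g_i} = \Omega_1(Z(Q_i))$ forcing uniqueness of the maximal element in Lemma \ref{lemmaLowerThan2}, or $Q = B$ by order in Theorem \ref{theoremPDifferencesLower} --- and these counts are exactly what the hypotheses $r_p(G) - r_p(\Omega) \leq 1$, respectively $\leq 2$ together with $r_p(G) \geq \log_p(|P|)-1$, are purchasing. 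Without them the identification of $\overline{(C_1 < \ldots < C_s < \Omega < B)}$ with $\overline{(C_1 < \ldots < C_s < \Omega < B^g)}$ simply has no proof. Your other difficulty is also real and worth stating sharply: since $c \leq d$ in $\A_p(G)'$ means containment of chains, the zig-zag $\id \preceq \Phi$ requires $c \cup \Phi(c)$ to be a chain, and $E_0\Omega$ need not be comparable with $E_1$; this is why the paper does not define a global $\Phi$ at all, but instead removes the offending orbits (those through subgroups $A$ with $\Omega \nleq A$, $\overline{A} \nleq \overline{\Omega}$) as beat points \emph{before} inserting $\Omega$, a step your sketch has no substitute for. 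So: your diagnosis matches the paper's obstructions, but the proposal is a restatement of the open problem plus the already-proved special cases, not a proof.
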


\begin{remark}
By Corollary \ref{coroAllEquivalent}, for a poset of $p$-subgroups $X$, the homotopy type (as finite space) of $X^{(n)}/G$ is determined by $X'/G$. Moreover, by Corollary \ref{coroAllContractible}, $X'/G$ is contractible if and only if some $X^{(n)}/G$ is contractible for $n\geq 1$.
\end{remark}

When trying to prove that $\A_p(G)'/G$ is contractible, the problem of how to control the fusion of chains of elementary abelian $p$-subgroups appears. This motivated us to introduce a new poset of $p$-subgroups in which it is easy to control the fusion of its chains.

Let $X_p(G)$ be the subposet of $\S_p(G)$ consisting of  the non-trivial $p$-subgroups normalized by all Sylow $p$-subgroup containing them. That is,
\[X_p(G) = \{Q\in \S_p(G) : Q\normal P \text{ for all }P\in \Syl_p(G) \text{ such that }Q\leq P\}.\]

In general, the subposet $X_p(G)\subseteq \S_p(G)$ is not weak equivalent to $\S_p(G)$. For instance, if $G = \SS_3\wr \ZZ_2$ and $p = 2$, $X_p(G)$ has the homotopy type of a discrete space of 9 points (as finite space), while $\S_p(G)$ is connected and has the weak homotopy type of a wedge of 16 $1$-spheres.

It is easy to see that $X_p(G)$ is invariant under the action of $G$, $\S_p(G)$ is contractible if and only if $X_p(G)$ is contractible, and $\S_p(G)$ is connected if $X_p(G)$ is. Note that the previous example shows that $\S_p(G)$ could be connected but $X_p(G)$ may not be.

We do not know if there is a stronger connection between $X_p(G)$ and $\S_p(G)$. Nevertheless, we believe they are weak equivalent when $X_p(G)$ is connected. This would suggest that Quillen's conjecture can be study in the poset $X_p(G)$: if $X_p(G)$ is homotopically trivial then it is contractible as finite space.

Now we show that $X_p(G)'/G$ is conically contractible. Fix $P\in \Syl_p(G)$. If $c$ is a chain in $X_p(G)'$ and all of its members are subgroups of $P$, then $c\cup(P)\in X_p(G)'$. We have the inequalities:
\[\overline{c}\leq \overline{c\cup (P)}\geq \overline{(P)}\]
which will turn into a homotopy of finite spaces after proving that the map $\overline{c}\mapsto \overline{c\cup (P)}$ does not depend on the representative $c\in \S_p(P)'\cap X_p(G)'$ chosen. Note that we are using Sylow's theorems.

Assume $c,c^g\in \S_p(P)'\cap X_p(G)'$ and $c = (Q_0< Q_1 < \ldots < Q_r)$. Then $P,P^g\leq N_G(Q_i^g)$ for all $i$ and $P,P^g$ are Sylow $p$-subgroups of $\bigcap_i N_G(Q_i^g)$. Take $h\in\bigcap_i N_G(Q_i^g)$ such that $P = P^{gh}$. Thus, $(c\cup (P))^{gh} = (c^{gh}\cup (P^{gh})) = (c^g\cup (P))$.

\begin{example}
A slightly modification of the previous ideas shows that $\N/G$ is contractible, where $\N\subseteq \S_p(G)'$ is the subposet of chains $c\in \S_p(G)'$ such that there exists a Sylow $p$-subgroup $P$ with $Q\normal P$ for all $Q\in c$. We can construct the homotopy in the same way we did in the previous example. First, fix a Sylow $p$-subgroup $P$. For $c\in \N$, take $g\in G$ with $Q^g\normal P$ for all $Q\in c$. We have a well-defined homotopy $\overline{c}\leq \overline{c^g\cup (P)}\geq \overline{(P)}$ in $\N/G$ (Cf. \cite[Theorem 3.2]{Lib08}).
\end{example}

\section{Contractibility of $\A_p(G)/G$}

Since the poset $G$ acts transitively on the maximal elements of $\S_p(G)$, the orbit space $\S_p(G)/G$ is contractible because it has a maximum (the orbit of any Sylow $p$-subgroup). The same holds for every $G$-invariant subposet $X\subseteq \S_p(G)$ which contains all the Sylow $p$-subgroups of $G$. For example, it holds for $ \B_p(G)$ or $X_p(G)$.

However, $G$ does not act transitively on maximal elements of $\A_p(G)$. In fact, the maximal elements of $\A_p(G)$ could have different orders. Hence, the homotopy type of $\A_p(G)/G$ cannot be deduced easily as in the case of $\S_p(G)$. Nevertheless, we show now that $\A_p(G)/G$ is contractible.

At this point, we will need a basic definition concerning the fusion of a group. We denote by $C_G(A)$ the centralizer in $G$ of the subset $A\subseteq G$.

\begin{definition}
Let $G$ be a finite group and fix $P\in\Syl_p(G)$. A subgroup $Q\leq P$ is said to be \textit{fully centralized} (fc for short) if $C_P(Q)\in \Syl_p(C_G(Q))$.
\end{definition}

\begin{remark}
It is easy to see that $Q\leq P$ is fully centralized if and only if $|C_P(Q^g)|\leq |C_P(Q)|$ for each $Q^g\leq P$.

Every subgroup $Q\leq P$ has a conjugated $Q^g$ by some $g\in G$ such that $Q^g\leq P$ is fully centralized: just take $Q^g\leq P$ maximizing $|C_P(Q^g)|$.
\end{remark}

\begin{theorem}\label{theoremApGGcontractil}
The poset $\A_p(G)/G$ is conically contractible.
\end{theorem}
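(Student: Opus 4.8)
The plan is to produce an explicit conical contraction of $\A_p(G)/G$ onto a single orbit. After fixing $P\in\Syl_p(G)$, let $\Omega=\Omega_1(Z(P))$ be the (non-trivial, elementary abelian) subgroup of central elements of $P$ of order dividing $p$. First I would define a map $\gamma\colon \A_p(G)/G\to \A_p(G)/G$ by choosing, for each orbit $\overline A$, a \emph{fully centralized} representative $A\le P$ (which exists by the remark preceding the statement) and setting $\gamma(\overline A)=\overline{A\Omega}$. The reason for insisting that $A$ be fully centralized is structural: since $\Omega\le Z(P)\le C_P(A)$, the subgroup $\Omega$ centralizes $A$, so $A$ and $\Omega$ are commuting elementary abelian subgroups and $A\Omega$ is again elementary abelian. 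Hence $A\Omega\in\A_p(G)$, with $A\le A\Omega\ge\Omega$.

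Granting that $\gamma$ is well defined and order-preserving, the theorem is immediate: from $A\le A\Omega$ and $\Omega\le A\Omega$ one reads off
\[
\overline A \;\le\; \gamma(\overline A)=\overline{A\Omega}\;\ge\;\overline{\Omega} \qquad\text{for all }\overline A\in\A_p(G)/G ,
\]
so that $\id\le\gamma\ge\cte_{\overline{\Omega}}$ as maps $\A_p(G)/G\to\A_p(G)/G$. This is precisely a conical contraction onto $\overline{\Omega}$.

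The crux is therefore a single monotonicity claim which simultaneously yields well-definedness and order-preservation: \emph{if $A\le P$ is fully centralized and $A_0\le P$ is any subgroup conjugate to $A$, then $\overline{A\Omega}\le\overline{A_0\Omega}$ in $\A_p(G)/G$.} Applying this with two fully centralized representatives in both directions gives $\overline{A\Omega}=\overline{A_0\Omega}$, so $\gamma$ is independent of the chosen representative; and if $\overline A\le\overline B$, choosing a fully centralized representative $B\le P$ of $\overline B$ and a representative $A_0\le B$ of $\overline A$, the claim together with $A_0\Omega\le B\Omega$ gives $\gamma(\overline A)\le\overline{A_0\Omega}\le\overline{B\Omega}=\gamma(\overline B)$. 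To approach the claim I would use that full centralization is detected by Sylow centralizers: $C_P(A)\in\Syl_p(C_G(A))$. Writing $A_0=A^x$, the subgroup $C_P(A)^x=C_{P^x}(A^x)$ is a Sylow $p$-subgroup of $C_G(A^x)$, whereas $C_P(A^x)$ is merely a $p$-subgroup of $C_G(A^x)$; conjugating the latter into the former by some $y\in C_G(A^x)$ and putting $z=xy$ yields $A^z=A^x$ and $\Omega^{z^{-1}}\le C_P(A)$. Conjugating back, $A^x\Omega=(A\,\Omega^{z^{-1}})^z$, which reduces the claim to comparing the two elementary abelian subgroups $A\Omega$ and $A\,\Omega^{z^{-1}}$ of $C_P(A)$, each obtained by enlarging $A$ by a copy of $\Omega$.

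The main obstacle is exactly this last comparison, that is, controlling $\Omega$ against its conjugate $\Omega^{z^{-1}}$. The naive Sylow argument only places $\Omega^{z^{-1}}$ inside $C_P(A)$; it does not guarantee $\Omega^{z^{-1}}\le Z(C_P(A))$, nor that $z$ normalizes $P$ or $\Omega$, because the conjugating elements were produced purely by Sylow's theorem inside various centralizers. Bridging this gap is where genuine fusion input enters: I would invoke the saturation/extension property of the fusion system $\F_P(G)$ — that an $\F_P(G)$-isomorphism onto a fully centralized subgroup extends over the centralizer $C_P(-)$, which contains $\Omega$ — in order to realize the comparison by an element that additionally controls the image of $\Omega$, forcing $\overline{A\Omega}\le\overline{A_0\Omega}$. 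Pinning down this control of the central subgroup $\Omega$ under the fusion of fully centralized elementary abelian subgroups inside $P$ is the technical heart of the argument, and is exactly the behaviour of fully centralized subgroups alluded to before the statement.
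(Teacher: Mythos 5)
Your global strategy coincides with the paper's: fix $P\in\Syl_p(G)$, represent each orbit by a fully centralized subgroup of $P$, and contract conically via $\overline{A}\le\overline{F(A)}\ge\overline{\Omega}$. But your choice $F(A)=A\Omega$ leaves a genuine gap at exactly the point you yourself flag. Your crux claim --- if $A\le P$ is fully centralized and $A_0\le P$ is conjugate to $A$, then $\overline{A\Omega}\le\overline{A_0\Omega}$ --- is never proved, and the tool you propose cannot prove it. The extension property of the saturated fusion system $\F_P(G)$ yields precisely the element your own Sylow computation already produced: some $y$ with $A^y=A_0$ and $\bigl(AC_P(A)\bigr)^y\le A_0C_P(A_0)\le P$; in particular $\Omega^y\le \Omega_1(Z(C_P(A_0)))$, but nothing forces $\Omega^y=\Omega$, nor $A_0\Omega^y$ to be conjugate to $A_0\Omega$. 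The axiom constrains the \emph{domain} over which the morphism extends, not where $\Omega$ lands inside $C_P(A_0)$, and $\Omega_1(Z(C_P(A_0)))$ is in general much larger than $\Omega$. Note also that your claim has unaddressed numerical content: $\overline{A\Omega}\le\overline{A_0\Omega}$ in the orbit poset forces $|A\cap\Omega|\ge|A_0\cap\Omega|$, and you give no argument that fully centralized representatives maximize the intersection with $Z(P)$ among conjugates in $P$ (for well-definedness you even need this to be an equality for any two fully centralized representatives). Controlling the fusion of $\Omega$ against other elementary abelian subgroups is precisely the difficulty that the paper can only handle, in Section 5, via Alperin's fusion theorem under rank restrictions, and which underlies the still-open stronger conjecture; it should not be expected to come for free here. (A minor point: your parenthetical reason for taking $A$ fully centralized is vacuous, since $\Omega\le Z(P)\le C_P(A)$ and the elementary abelianity of $A\Omega$ hold for \emph{every} $A\le P$.)

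The paper's proof repairs exactly this defect by enlarging $A\Omega$ to $D(A)=\Omega_1\bigl(Z(\Omega_1(C_P(A)))\bigr)$, which still contains both $A$ and $\Omega$ (it is the intersection of the maximal elementary abelian subgroups of $P$ containing $A$), and contracting via $\overline{A}\le\overline{D(A)}\ge\overline{\Omega}$. The point is that $D(A)$ is \emph{characteristic} in $C_P(A)$, whereas $A\Omega$ is not: it depends on the ambient $P$ through $Z(P)$, which the comparison isomorphism does not see. For two fully centralized representatives $A$ and $A^g$, the Sylow argument gives $h\in C_G(A^g)$ with $C_P(A)^{gh}=C_P(A^g)$, and characteristicity then yields $D(A)^{gh}=D(A^g)$ automatically; order-preservation follows by the same device after replacing $B$ by a conjugate $B^{gh}\le C_P(A)$ that is fully centralized, using $C_P(B^{gh})\le C_P(A)$. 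So well-definedness --- the technical heart you identify but do not supply --- is exactly what the characteristic-subgroup trick buys and what $A\Omega$ loses. Unless you can prove your monotonicity claim (and I see no route that does not essentially reconstruct $D(A)$), you should replace $A\Omega$ by $D(A)$.
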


\begin{proof}
Fix a Sylow $p$-subgroup $P\leq G$. Given that every orbit of $\A_p(G)/G$ can be represented by a fully centralized element inside $P$, we define the following homotopy: for $x\in \A_p(G)/G$, take $A\in x$ such that $A\leq P$ is fully centralized, and put
\[x = \overline{A}\leq \overline{\Omega_1(Z(\Omega_1(C_P(A))))}\geq \overline{\Omega_1(Z(P))}.\]
Recall that $\Omega_1(H)$ is the subgroup of $H$ generated by the elements of order $p$ in $H$. We are going to prove that the map $x = \overline{A} \mapsto \overline{\Omega_1(Z(\Omega_1(C_P(A))))}$ is well-defined, i.e. it does not depend on the choice of $A$, and it is order-preserving. After proving this, since $\Omega_1(Z(P))$ is always contained in the subgroups of the form $\Omega_1(Z(\Omega_1(C_P(A))))$, the result will follow.

\underline{Well-defined:} take $A,B\in x$ both fully centralized contained in $P$. We have to see that there exists $k\in G$ such that $\Omega_1(Z(\Omega_1(C_P(A))))^k = \Omega_1(Z(\Omega_1(C_P(B))))$.

Since $\overline{A} = \overline{B}$, $B = A^g$ for some $g\in G$. Note that $C_P(A)\in \Syl_p(C_G(A))$ implies $C_{P^g}(A^g)\in \Syl_p(C_G(A^g))$. Given that $C_P(A^g)\in \Syl_p(C_G(A^g))$, there exists $h\in C_G(A^g)$ such that $C_P(A^g) = C_{P^g}(A^g)^h = C_{P}(A)^{gh}$. Then, conjugation by $gh$ induces an isomorphism between $C_P(A)$ and $C_P(A^g)$. On the other hand, $\Omega_1(Z(\Omega_1(H)))$ is a characteristic subgroup of the group $H$. In conclusion, it must be $\Omega_1(Z(\Omega_1(C_P(A))))^{gh} = \Omega_1(Z(\Omega_1(C_P(A^g))))$.

\underline{Order-preserving:} suppose that $\overline{A} < \overline{B}$ with both $A,B\leq P$ fully centralized. We have to see that $$\overline{\Omega_1(Z(\Omega_1(C_P(A))))}\leq \overline{\Omega_1(Z(\Omega_1(C_P(B))))}.$$ Since $\overline{A} < \overline{B}$, there exists $g\in G$ such that $A< B^g$. However, it may happen that $B^g\nleq P$. We are going to fix this problem by using a trick that will be used repetitively along this article. Given that $C_P(A)$ is a Sylow $p$-subgroup of $C_G(A)$ and $B^g\leq C_G(A)$, there exists $h\in C_G(A)$ such that $B^{gh}\leq C_P(A)$. Moreover, we may choose $h$ in such a way that $B^{gh}\leq C_P(A)$ is fully centralized in $C_G(A)$ with the Sylow $p$-subgroup $C_P(A)$. This means that $C_{C_P(A)}(B^{gh})$ is a Sylow $p$-subgroup of $C_{C_G(A)}(B^{gh})$. But $C_{C_P(A)}(B^{gh}) = C_P(B^{gh})$ and $C_{C_G(A)}(B^{gh}) = C_G(B^{gh})$. Therefore, $A\leq B^{gh}$ and $B^{gh}\leq P$ is fully centralized in $G$.

By the well-definition, we obtain that $$\Omega_1(Z(\Omega_1(C_P(A))))\leq \Omega_1(Z(\Omega_1(C_P(B^{gh})))) = \Omega_1(Z(\Omega_1(C_P(B))))^k$$ for some $k\in G$.
\end{proof}

\begin{remark}
In the previous proof we have used the fact that if $A\leq B\leq P$ and $A$ is fully centralized, there exists $h\in C_G(A)$ such that $B^h\leq P$ is also fully centralized. This works because of the inclusion reversing of taking centralizers.

The other trick used here is that we always view the representative elements of an orbit inside a fixed Sylow $p$-subgroup. This will also be used repetitively.
\end{remark}

\begin{remark}
For $A\in \A_p(P)$, with $P$ a $p$-group, the subgroup $\Omega_1(Z(\Omega_1(C_P(A))))$ is just the intersection of all maximal elementary abelian $p$-subgroups of $P$ containing $A$.
\end{remark}

\begin{remark}
Take $X\in \{\A_p(G),\S_p(G),\B_p(G)\}$. If we knew that $X$ satisfies property (B), then $X'/G \equiv (X/G)'$ would be contractible by Corollary \ref{coroAllContractible}. However, it may not happen as we have shown in Example \ref{exampleNoB}.
\end{remark}

\section{Contractibility of $\A_p(G)'/G$}

In Section 3 we have seen that $\B_p(G)'/G$ and $\S_p(G)'/G$ are in general not contractible, and conjectured that $\A_p(G)'/G$ is always contractible. In this section we will prove some particular cases of this stronger conjecture.

In the previous section we have shown that $\A_p(G)/G$ is contractible by using a trick with the centralizers. This trick cannot be carried out in $\S_p(G)$ or $\B_p(G)$ because not every subgroup in these posets is abelian. The following proposition suggests that the property of being abelian makes things work.

\begin{proposition}\label{propMxlRank}
If $\A_p(G)\subseteq \S_p(G)$ is a strong deformation retract, both $\A_p(G)'/G$ and $\S_p(G)'/G$ are contractible finite spaces. In particular, this holds when the Sylow $p$-subgroups are abelian. Moreover, in the latter case $\B_p(G)\subseteq \S_p(G)$ is a strong deformation retract and $\B_p(G)'/G$ is a contractible finite space.
\end{proposition}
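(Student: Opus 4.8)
The plan is to transport the contractibility of the orbit space across the deformation retract, and then to use the key structural fact (Corollary \ref{coroAllContractible}) that turns a statement about $X'/G$ into a statement about $X/G$ once property (B) is available. The starting observation is that the hypothesis ``$\A_p(G)\subseteq \S_p(G)$ is a strong deformation retract'' is a statement about finite spaces, and since the inclusion is $G$-equivariant, the earlier remark (based on \cite[Proposition 8.1.6]{Bar11a}) upgrades it automatically to an \emph{equivariant} strong deformation retract. Equivariant strong deformation retracts descend to the orbit posets, so $\A_p(G)/G\subseteq\S_p(G)/G$ is a strong deformation retract of finite spaces; since $\S_p(G)/G$ has a maximum (the orbit of any Sylow $p$-subgroup) it is contractible, and hence $\A_p(G)/G$ is contractible as well. (This of course also follows from Theorem \ref{theoremApGGcontractil}, but I would phrase it so that both posets are simultaneously handled.)

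The real content is passing from contractibility of $X/G$ to that of $X'/G$. Here I would exploit the deformation retract a second time, now \emph{after} subdividing: since $\K(\A_p(G))\hookrightarrow\K(\S_p(G))$ is induced by a retract of finite posets, the subdivisions $\A_p(G)'\hookrightarrow\S_p(G)'$ again form a $G$-equivariant strong deformation retract, and therefore $\A_p(G)'/G\hookrightarrow\S_p(G)'/G$ is a strong deformation retract as well. Thus it suffices to prove that \emph{one} of the two, say $\S_p(G)'/G$, is contractible, and the other follows for free. This reduces both assertions to a single contractibility statement.

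To establish that single statement I would build an explicit conical contractibility of $\S_p(G)'/G$ in the spirit of the $X_p(G)'/G$ argument given just before Section 4. Fix $P\in\Syl_p(G)$. Every orbit of a chain in $\S_p(G)'$ has a representative $c=(Q_0<\dots<Q_r)$ lying entirely inside $P$, by Sylow's theorem; for such a chain define $\overline{c}\mapsto\overline{c\cup(P)}$, giving the chain of inequalities $\overline{c}\leq\overline{c\cup(P)}\geq\overline{(P)}$. The one thing to check is independence of representative: if $c,c^g$ both lie in $P$, then $P$ and $P^g$ are Sylow $p$-subgroups of $\bigcap_i N_G(Q_i^g)$, so some $h\in\bigcap_i N_G(Q_i^g)$ satisfies $P=P^{gh}$, whence $(c\cup(P))^{gh}=c^g\cup(P)$ and the map is well defined on orbits. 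Order-preservation and the definition of conical contractibility then finish the proof exactly as before. The main obstacle I anticipate is not this homotopy but the last sentence: showing that when $P$ is abelian, $\B_p(G)\subseteq\S_p(G)$ is a strong deformation retract. For that I would argue that abelian $P$ forces the relevant normalizer structure so that every $p$-radical subgroup is itself elementary abelian (or, more directly, that $\A_p(G)=\B_p(G)=\S_p(G)$ up to the retraction), and then invoke the already-proved first part with $\B_p(G)$ in place of $\A_p(G)$; pinning down precisely why abelian Sylow subgroups collapse the radical poset onto a retract is the step requiring genuine group-theoretic input rather than formal finite-space manipulation.
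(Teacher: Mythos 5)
Your opening reduction is essentially the paper's first step, and it is sound: the hypothesis upgrades to an \emph{equivariant} strong deformation retract, this passes to the subdivisions $\A_p(G)'\hookrightarrow\S_p(G)'$ (a point worth justifying via the strong collapse results of \cite{BM12}, since a homotopy $f\leq g$ of poset maps does not naively subdivide), and descending to orbit spaces shows $\A_p(G)'/G$ and $\S_p(G)'/G$ have the same homotopy type, reducing everything to one contractibility statement. The fatal gap is what you do next: your proposed conical contraction of $\S_p(G)'/G$ never uses the hypothesis, and no hypothesis-free argument can exist, because Example \ref{exampleSpG'/GnotContractible} exhibits $\S_2(G)'/G$ as non-contractible for $G=\AA_6$ and $G=\PSL(3,2)$. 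Concretely, your well-definedness check breaks down: from $c,c^g\subseteq P$ one cannot conclude $P,P^g\leq \bigcap_i N_G(Q_i^g)$ — that containment is exactly the defining property of the auxiliary poset $X_p(G)$ (each member is normalized by \emph{every} Sylow subgroup containing it), and it fails for arbitrary chains in $\S_p(G)$, so Sylow's theorem inside $\bigcap_i N_G(Q_i^g)$ is unavailable. Example \ref{exampleNoB} makes the failure explicit: for $G=\SS_4$, $p=2$, the subgroups $H_1$ and $H_2=H_1^g$ both lie in the Sylow subgroup $D$, yet $\overline{(H_1<D)}\neq\overline{(H_2<D)}$, so $\overline{c}\mapsto\overline{c\cup(P)}$ is not well defined on $\S_p(G)'/G$. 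The paper contracts the \emph{other} space, $\A_p(G)'/G$, and the hypothesis enters precisely there: by \cite[Proposition 3.2]{MP18} it is equivalent to $\Omega_1(P)$ being abelian, so $\Omega_1(P)$ is the unique maximal element of $\A_p(P)$, and the cone map appends $\Omega_1(P)$ to chains whose top member is fully centralized in $P$; well-definedness then holds because any conjugate $\Omega_1(P)^{k}\leq P$ is generated by elements of order $p$, hence contained in and therefore equal to $\Omega_1(P)$ — a rigidity that appending $P$ itself does not enjoy.

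Your sketch of the moreover part also rests on a false intermediate claim: $p$-radical subgroups need not be elementary abelian even when $P$ is abelian ($P$ itself always lies in $\B_p(G)$, and an abelian Sylow subgroup need not be elementary abelian). The paper's actual argument is different and cleaner: it shows $\B_p(G)=\i(\S_p(G))$, the poset of non-trivial intersections of Sylow $p$-subgroups, which is \emph{always} an equivariant strong deformation retract of $\S_p(G)$. One inclusion is general (every radical subgroup equals the intersection of the Sylow subgroups containing it); the converse is where abelianity enters: if $Q$ is such an intersection and $Q\leq P'\in\Syl_p(G)$, then $P'\leq C_G(Q)\leq N_G(Q)$, whence $\Syl_p(N_G(Q))=\{P'\in\Syl_p(G):Q\leq P'\}$ and $\O_p(N_G(Q))=\bigcap_{Q\leq P'}P'=Q$. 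With that retract in hand, $\B_p(G)'/G$ is homotopy equivalent to $\S_p(G)'/G$ and contractibility follows from the first part; so the genuine group-theoretic input you anticipated is the identification of $\B_p(G)$ with $\i(\S_p(G))$, not any structural claim about radicals being elementary abelian.
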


\begin{proof}
In \cite[Proposition 3.2]{MP18} it is shown that $\A_p(G)\subseteq \S_p(G)$ is a strong deformation retract if and only if $\Omega_1(P)$ is abelian for $P\in \Syl_p(G)$. In particular, this holds when the Sylow $p$-subgroups are abelian.

The hypothesis implies that $\A_p(G)\hookrightarrow\S_p(G)$ is an equivariant strong deformation retract. It induces an equivariant homotopy equivalence $\A_p(G)'\hookrightarrow \S_p(G)'$ and therefore, $\A_p(G)'/G$ and $\S_p(G)'/G$ have the same homotopy type as finite space. Now we show that $\A_p(G)'/G$ is contractible.

Fix a Sylow $p$-subgroup $P\leq G$ and always choose representatives of an element of $\A_p(G)'/G$ inside $\A_p(P)'$. By hypothesis, $\Omega_1(P)$ is the unique maximal element of $\A_p(P)$. If $\overline{(A_0 < A_1 < \ldots < A_r)}\in \A_p(G)'/G$, with $A_r\leq P$ fully centralized, then $(A_0 < A_1 < \ldots < A_r \leq \Omega_1(P))\in \A_p(P)'$. We are going to prove that the map
\[ c= \overline{(A_0 < A_1 < \ldots < A_r)}\in\A_p(G)'/G \mapsto f(c) = \overline{(A_0 < A_1 < \ldots < A_r \leq \Omega_1(P))}\]
is well-defined. Clearly it is order-preserving, and since $f(c) \geq \overline{(\Omega_1(P))}$, it will give us the contractibility of $\A_p(G)'/G$ after proving the well-definition.

Suppose that $\overline{(A_0 < A_1 < \ldots < A_r)} = \overline{(B_0 < B_1 < \ldots < B_r)}$ with $A_r,B_r\leq P$ both fully centralized. We have to prove that $$\overline{(A_0 < A_1 < \ldots < A_r \leq \Omega_1(P))} = \overline{(B_0 < B_1 < \ldots < B_r \leq \Omega_1(P))}.$$ There exists $g\in G$ such that $A_i^g = B_i$ for each $0\leq i \leq r$. Then
\begin{align*}
\overline{(B_0 < B_1 < \ldots < B_r \leq \Omega_1(P))} & = \overline{(A_0^g < A_1^g < \ldots < A_r^g \leq \Omega_1(P))}\\
& = \overline{(A_0 < A_1 < \ldots < A_r \leq \Omega_1(P)^{g^{-1}})}\\
& = \overline{(A_0 < A_1 < \ldots < A_r \leq \Omega_1(P)^{g^{-1}h})}
\end{align*}
where $h\in C_G(A_r)$ is such that $\Omega_1(P)^{g^{-1}h}\leq P$. Given that $\Omega_1(P)^{g^{-1}h}$ is generated by elements of order $p$, it is contained in $\Omega_1(P)$. Therefore, $\Omega_1(P)^{g^{-1}h} = \Omega_1(P)$ and $$\overline{(A_0 < A_1 < \ldots < A_r \leq \Omega_1(P))} = \overline{(B_0 < B_1 < \ldots < B_r \leq \Omega_1(P))}.$$

It remains to show that $\B_p(G)\subseteq \S_p(G)$ is a strong deformation retract when the Sylow $p$-subgroups are abelian. This is the content of the following proposition.
\end{proof}

\begin{proposition}
If the Sylow $p$-subgroups of $G$ are abelian, $\B_p(G)\subseteq \S_p(G)$ is a strong deformation retract. Moreover, since $\B_p(G)$ is invariant under the action of $G$, it is an equivariant strong deformation retract of $\S_p(G)$.
\end{proposition}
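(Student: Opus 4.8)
The plan is to produce a single order-preserving, $G$-equivariant retraction $r\colon\S_p(G)\to\B_p(G)$ satisfying $r(Q)\geq Q$ for every $Q$. Once such an $r$ is at hand, $i\circ r\geq \mathrm{id}_{\S_p(G)}$ (where $i$ is the inclusion) and $r$ fixes $\B_p(G)$ pointwise, so by the description of homotopies between finite spaces recalled in Section~2 (the fence $\mathrm{id}\leq r$ is a homotopy, and it is stationary on the subspace where the two maps coincide) the subposet $\B_p(G)$ is a strong deformation retract of $\S_p(G)$; its $G$-invariance then upgrades this to an \emph{equivariant} strong deformation retract by \cite[Proposition~8.1.6]{Bar11a}. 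The natural candidate is
\[ r(Q)=\O_p(N_G(Q)), \]
which is $G$-equivariant because $N_G(Q^g)=N_G(Q)^g$ and $\O_p$ is characteristic, and which satisfies $Q\leq r(Q)$ because $Q\normal N_G(Q)$ is a $p$-group.

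The argument hinges on two facts, both using that the Sylow $p$-subgroups are abelian. First I would record the standard identity $\O_p(H)=\bigcap_{S\in\Syl_p(H)}S$ and use it to establish
\[ \O_p(N_G(Q))=\bigcap\{\,S\in\Syl_p(G):Q\leq S\,\}. \]
The abelian hypothesis enters decisively here: if $S\in\Syl_p(G)$ contains $Q$, then $S$ abelian forces $Q\normal S$, hence $S\leq C_G(Q)\leq N_G(Q)$, and since $S$ has maximal order it is a Sylow $p$-subgroup of $N_G(Q)$; conversely every Sylow of $N_G(Q)$ has order $|P|$ and contains $Q$, so it is a Sylow of $G$ containing $Q$. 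With this description $r$ is immediately order-preserving: if $Q_1\leq Q_2$ then $\{S:Q_2\leq S\}\subseteq\{S:Q_1\leq S\}$, so intersecting over the smaller family yields $r(Q_1)\leq r(Q_2)$.

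Second, and this is the heart of the matter, I would prove that the intersection of any family of Sylow $p$-subgroups is $p$-radical. Given $D=\bigcap_{S\in\mathcal S}S$ with $\mathcal S\subseteq\Syl_p(G)$, each $S\in\mathcal S$ is abelian and contains $D$, so $D\normal S$ and therefore $S\in\Syl_p(N_G(D))$; consequently $\O_p(N_G(D))=\bigcap_{S'\in\Syl_p(N_G(D))}S'\leq\bigcap_{S\in\mathcal S}S=D$, while $D\leq\O_p(N_G(D))$ always holds, whence $D=\O_p(N_G(D))$ is radical. Combined with the identity above, $r(Q)$ is an intersection of Sylow subgroups and hence lies in $\B_p(G)$, so $r$ does land in $\B_p(G)$. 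This is exactly where abelianness is indispensable: it guarantees $N_S(D)=S$ for $D\leq S$, i.e.\ that every member of $\mathcal S$ is a \emph{full} Sylow subgroup of $N_G(D)$. For non-abelian Sylow subgroups the analogous map need not reach the radical subgroups in one step, which is ultimately why $\B_p(G)'/G$ can fail to be contractible in general (Example~\ref{exampleBpG'/GnotContractible}).

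It then remains to collect the retraction properties. A radical subgroup $R$ satisfies $R=\O_p(N_G(R))=r(R)$, so $r|_{\B_p(G)}=\mathrm{id}$ and $r$ is idempotent; together with order-preservation, the inequality $r(Q)\geq Q$, and equivariance, this is precisely the data required above, giving that $\B_p(G)$ is a $G$-invariant, hence equivariant, strong deformation retract of $\S_p(G)$. I expect the only genuinely delicate point to be the radicality of intersections of Sylow subgroups; the remaining verifications are formal manipulations with normalizers together with the finite-space homotopy criteria recalled in Section~2.
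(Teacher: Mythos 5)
Your proof is correct, and it differs from the paper's in packaging rather than in substance. The paper proves the identity $\mathfrak{i}(\S_p(G)) = \B_p(G)$, where $\mathfrak{i}(\S_p(G))$ is the subposet of non-trivial intersections of Sylow $p$-subgroups, and then simply cites the known fact that $\mathfrak{i}(\S_p(G))$ is always an equivariant strong deformation retract of $\S_p(G)$ (\cite{Bar11a,MP18}); its two inclusions are exactly your two lemmas, namely that radical subgroups are intersections of the Sylow subgroups containing them and that, by abelianity, $\Syl_p(N_G(Q)) = \{S\in\Syl_p(G): Q\leq S\}$, forcing intersections of Sylow subgroups to be radical. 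You instead inline the retraction: you exhibit $r(Q)=\O_p(N_G(Q))$ explicitly, verify via the intersection description that it is order-preserving, equivariant, lands in $\B_p(G)$, satisfies $\mathrm{id}\leq i\circ r$, and fixes $\B_p(G)$ pointwise, then invoke the one-step fence homotopy (stationary where the maps agree) and \cite[Proposition 8.1.6]{Bar11a}. This makes your argument self-contained where the paper leans on a citation --- in effect you re-prove, in the abelian case, the retraction $Q\mapsto\bigcap\{S\in\Syl_p(G):Q\leq S\}$ underlying the cited result --- at the cost of redoing the finite-space homotopy bookkeeping that the paper gets for free. Two trivial points worth tightening: your general claim that an intersection of Sylow subgroups is radical should be restricted to non-trivial intersections (harmless here, since $r(Q)\geq Q>1$), and in the converse direction of your first identity you should note explicitly that every Sylow subgroup of $N_G(Q)$ contains $Q$ because $Q$ is a normal $p$-subgroup of $N_G(Q)$.
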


\begin{proof}
We prove that $\mathfrak{i}(\S_p(G)) = \B_p(G)$, where $\mathfrak{i}(\S_p(G))$ is the subposet of $\S_p(G)$ consisting of all non-trivial intersections of Sylow $p$-subgroups (see \cite{Bar11a,MP18}). Given that $\mathfrak{i}(\S_p(G))$ is always an equivariant strong deformation retract of $\S_p(G)$, the result will follow.

It is easy to show that in general every $p$-radical subgroup of $G$ is equal to the intersection of the Sylow $p$-subgroups containing it, so $\B_p(G)\subseteq \mathfrak{i}(\S_p(G))$. For the other inclusion take $Q\in \mathfrak{i}(\S_p(G))$ and note that $\Syl_p(N_G(Q)) = \{P\in\Syl_p(G): Q\leq P\}$ by abelianity. This implies that $$\O_p(N_G(Q)) = \bigcap_{P\in\Syl_p(G): Q\leq P} P = Q,$$ i.e. $Q\in\B_p(G)$.
\end{proof}

\begin{remark}\label{remarkXctr}
If $X$ is a contractible $G$-poset, $X'$ is contractible by \cite[Corollary 4.18]{BM12} and thus, its orbit space $X'/G$ is contractible by \cite[Proposition 8.3.14]{Bar11a}. In particular we have the following proposition.
\end{remark}

\begin{proposition}\label{propOpGcontractible}
If $\O_p(G) \neq 1$ the finite spaces $\S_p(G)'/G$ and $\B_p(G)'/G$ are contractible.
\end{proposition}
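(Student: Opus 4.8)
The plan is to deduce both assertions from Remark~\ref{remarkXctr}: it suffices to prove that, when $N := \O_p(G) \neq 1$, each of the $G$-posets $\S_p(G)$ and $\B_p(G)$ is contractible as a finite space, since then Remark~\ref{remarkXctr} immediately yields the contractibility of $\S_p(G)'/G$ and of $\B_p(G)'/G$. So the work reduces to producing, in each case, a contraction of the underlying finite poset.

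For $\S_p(G)$ the contraction is conical. Since $N \normal G$ is a non-trivial $p$-group, for every $Q \in \S_p(G)$ the product $QN$ is again a non-trivial $p$-subgroup, so $f(Q) = QN$ is a well-defined order-preserving self-map of $\S_p(G)$. I would then record the chain of comparisons $Q \leq QN \geq N$, i.e. $\id_{\S_p(G)} \leq f \geq \cte_N$, which exhibits $\S_p(G)$ as conically contractible to the point $N$; since $N^g = N$ this contraction is moreover $G$-equivariant, in accordance with the fact noted earlier that a contractible finite $G$-poset is automatically $G$-contractible. Hence $\S_p(G)'/G$ is contractible.

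For $\B_p(G)$ I would prove the stronger fact that $N = \O_p(G)$ is the \emph{minimum} of $\B_p(G)$, which makes $\B_p(G)$ contractible (a poset with a minimum is contractible) and thus, by Remark~\ref{remarkXctr}, gives the contractibility of $\B_p(G)'/G$. First, $N$ is itself $p$-radical: as $N \normal G$ we have $N_G(N) = G$, so $\O_p(N_G(N)) = \O_p(G) = N$. Second, and this is the crux, I claim $N \leq P$ for every $P \in \B_p(G)$. The argument I would use is the following. The set $NP$ is a $p$-group containing $P$; an element $x$ of the normalizer $N_{NP}(P)$ can be written $x = np$ with $n \in N$, $p \in P$, and since $x$ and $p$ both normalize $P$ so does $n$, whence $n \in N \cap N_G(P)$ and $N_{NP}(P) = (N \cap N_G(P))\,P$. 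Now $N \cap N_G(P)$ is a $p$-subgroup normal in $N_G(P)$ (because $N \normal G$), hence $N \cap N_G(P) \leq \O_p(N_G(P)) = P$, giving $N_{NP}(P) = P$. As a proper subgroup of a finite $p$-group is strictly contained in its normalizer, this forces $P = NP$, i.e. $N \leq P$.

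The main obstacle is exactly this last claim — that $\O_p(G)$ lies below every $p$-radical subgroup — while everything else is a routine application of the conical-contractibility criterion together with Remark~\ref{remarkXctr}. The delicate points in establishing the claim are the correct identification $N_{NP}(P) = (N \cap N_G(P))\,P$ and the containment $N \cap N_G(P) \leq \O_p(N_G(P))$, after which the normalizer-growth property of finite $p$-groups closes the argument.
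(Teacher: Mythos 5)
Your proof is correct and takes essentially the same route as the paper: a conical contraction $Q \leq QN \geq N$ of $\S_p(G)$, the fact that $N = \O_p(G)$ is the minimum of $\B_p(G)$, and an appeal to Remark~\ref{remarkXctr}. The only difference is that you spell out the (standard, and correctly executed) verification that $\O_p(G)$ is $p$-radical and contained in every $P \in \B_p(G)$ via $N_{NP}(P) = (N \cap N_G(P))P = P$, a fact the paper simply asserts.
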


\begin{proof}
Since $\O_p(G)\neq 1$, $\S_p(G)$ and $\B_p(G)$ are contractible. In the former case, the homotopy is given by $P\leq P\O_p(G)\geq P$ and in the latter one, $\O_p(G)$ is the minimum of $\B_p(G)$. The result follows from the previous remark.
\end{proof}

\begin{remark}
From \cite{Sto84}, $\O_p(G)\neq 1$ if and only if $\S_p(G)$ is a contractible finite space. However, it does not imply that $\A_p(G)$ is contractible (see \cite{MP18}). Hence, the previous proposition does not work for $\A_p(G)'/G$.  Nevertheless, in most of the cases, $\A_p(G)$ is contractible when $\S_p(G)$ is, and we could expect $\A_p(G)'/G$ to be contractible when $\S_p(G)$ is.
\end{remark}

Similarly as in \cite{MP18}, we can prove the following result.

\begin{proposition}
If $|G| = p^\alpha.q$ for $q$ prime, $\A_p(G)'/G$, $\S_p(G)'/G$ and $\B_p(G)'/G$ are all contractible finite spaces.
\end{proposition}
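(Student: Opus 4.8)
The plan is to split on the largest normal $p$-subgroup $\O_p(G)$, since the two regimes $\O_p(G)=1$ and $\O_p(G)\neq 1$ force very different (and in each case tractable) structures on the Sylow $p$-subgroups. Throughout I fix $P\in\Syl_p(G)$ and use that, as $q$ is prime, the number of Sylow $p$-subgroups is $n_p\in\{1,q\}$. For $\S_p(G)'/G$ and $\B_p(G)'/G$ the heavy lifting is already available: when $\O_p(G)\neq 1$ both are contractible by Proposition \ref{propOpGcontractible}, so only the case $\O_p(G)=1$ and the poset $\A_p(G)'/G$ will require genuine work.

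First I would treat $\O_p(G)=1$. Here $G$ is solvable by Burnside's $p^\alpha q$ theorem, and since the Fitting subgroup satisfies $C_G(F(G))\leq F(G)$ with $F(G)=\O_p(G)\times\O_q(G)=\O_q(G)$, one gets $F(G)=\O_q(G)\cong\ZZ_q$, a normal (hence unique) Sylow $q$-subgroup $Q$ with $C_G(Q)=Q$. Thus $P\cong G/Q$ embeds in $\mathrm{Aut}(\ZZ_q)\cong\ZZ_{q-1}$, so $P$ is cyclic and $G=Q\rtimes P$ is a Frobenius group; in particular distinct Sylow $p$-subgroups intersect trivially and each nontrivial $p$-subgroup lies in a unique Sylow. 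From this rigidity it is straightforward to check that $\S_p(G)$, $\A_p(G)$ and $\B_p(G)$ all satisfy property (B): a chain determines the unique Sylow containing it, and inside a cyclic group subgroups are determined by their order, so element-wise conjugate chains are conjugate as chains. Since $\A_p(G)/G$ and $\B_p(G)/G$ are single points and $\S_p(G)/G$ is a chain with maximum $\overline{P}$, all three unsubdivided quotients are contractible, and Corollary \ref{coroAllContractible} upgrades this to contractibility of $\A_p(G)'/G$, $\S_p(G)'/G$ and $\B_p(G)'/G$.

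It remains to prove that $\A_p(G)'/G$ is contractible when $W:=\O_p(G)\neq 1$. By Remark \ref{remarkXctr} it suffices to show that the $G$-poset $\A_p(G)$ is contractible \emph{as a finite space}. Set $Z:=\Omega_1(Z(W))$, a nontrivial elementary abelian subgroup that is characteristic in $W$ and hence normal in $G$. The subposet $\A_p(C_G(Z))$ is easy to collapse: since $Z\leq Z(C_G(Z))$, for every $A\leq C_G(Z)$ the product $AZ$ is again elementary abelian, and the order-preserving map $A\mapsto AZ$ realizes a conical contraction of $\A_p(C_G(Z))$ to the point $Z$. When $P$ is normal ($n_p=1$) this already finishes the argument, because then $W=P$, every elementary abelian $p$-subgroup lies in $P$, and $C_G(Z)\geq P$ gives $\A_p(C_G(Z))=\A_p(P)=\A_p(G)$ outright.

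The hard part is the self-normalizing case $n_p=q$, where $\A_p(G)$ genuinely spreads across the $q$ Sylow subgroups and $Z$ need not be central in $G$ (for instance $G=\SS_4$, $p=2$, where $Z(G)=1$ but $\A_2(G)$ is nonetheless contractible). The plan is to show that $\A_p(G)$ strong-deformation-retracts onto $\A_p(C_G(Z))$ by removing, in a suitable order, exactly the elementary abelian subgroups that do not centralize $Z$ as beat points of $\A_p(G)$. This is precisely where $|G|=p^\alpha q$ must be used: because $n_p=q$, an elementary abelian $A$ acts on the set of $q$ Sylow subgroups and the number containing it is $\equiv q\pmod p$, which tightly constrains the fusion of $A$ and, with it, the poset of maximal elementary abelian subgroups lying above $A$. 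Controlling this fusion so as to guarantee the requisite up/down beat points—exactly as the fully centralized/centralizer trick of Theorem \ref{theoremApGGcontractil} controls single subgroups—is the main obstacle; once the retract onto the contractible poset $\A_p(C_G(Z))$ is established, Remark \ref{remarkXctr} delivers contractibility of $\A_p(G)'/G$.
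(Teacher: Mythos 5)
Your reduction is architecturally sound, but it has a genuine gap exactly where the proposition is hardest: you never prove that $\A_p(G)$ is contractible as a finite space when $\O_p(G)\neq 1$ and $n_p=q$. What you offer there is a plan---retract $\A_p(G)$ onto $\A_p(C_G(Z))$ with $Z=\Omega_1(Z(\O_p(G)))$ by deleting the non-centralizing subgroups as beat points---together with your own admission that controlling the fusion needed to produce those beat points is ``the main obstacle''. That obstacle is the entire content of the case: in general $\O_p(G)\neq 1$ does \emph{not} force $\A_p(G)$ to be contractible (this is stressed in the remark following Proposition \ref{propOpGcontractible}, via the counterexample of \cite{MP18}), so the hypothesis $|G|=p^\alpha q$ must enter essentially, and your congruence observation (that the number of Sylow $p$-subgroups containing a fixed $A$ is $\equiv 1 \pmod p$) is never converted into what a beat-point argument actually requires: that each non-centralizing $A$, at the moment of its removal, admits a maximum below it or a minimum above it in the remaining subposet, in some specified order of extraction. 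You also give no general reason why $\A_p(C_G(Z))$ is the correct retract (it happens to be in your $\SS_4$ check, but a single example is not an argument). As written, the proposal therefore does not establish the $\A_p(G)'/G$ part of the statement.

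For comparison, the paper's proof is a short reduction that imports precisely the dichotomy you are missing: by the analysis in the proof of \cite[Proposition 3.2]{MP18}, when $|G|=p^\alpha q$ either all three posets $\A_p(G)$, $\S_p(G)$, $\B_p(G)$ are contractible finite spaces, or the Sylow $p$-subgroups intersect trivially; the first branch is closed by Remark \ref{remarkXctr}, and in the second branch all three posets are $G$-homotopy equivalent to an antichain of $|\Syl_p(G)|$ points with transitive $G$-action, whence the contractibility of $X'/G$. To complete your write-up you must either cite that analysis or reprove it; nothing in your text substitutes for it. On the positive side, your $\O_p(G)=1$ branch is correct and agreeably self-contained: Burnside plus the Fitting-subgroup argument shows $G=\ZZ_q\rtimes P$ is Frobenius with $P$ cyclic, which yields trivial Sylow intersections, property (B) for all three posets, and explicit quotients ($\A_p(G)/G$ and $\B_p(G)/G$ points, $\S_p(G)/G$ a chain), so Corollary \ref{coroAllContractible} applies; this verification of property (B) is a legitimate alternative to the paper's $G$-homotopy-equivalence argument for that branch, as is your conical contraction $A\mapsto AZ$ in the subcase $n_p=1$.
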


\begin{proof}
By the analysis we have done in the proof of \cite[Proposition 3.2]{MP18}, we deduce that they are all contractible or the Sylow $p$-subgroups intersect trivially. The former case follows from Remark \ref{remarkXctr}. In the latter case, they are all $G$-homotopy equivalent since they have the $G$-homotopy type of a space with $|\Syl_p(G)|$ points in which $G$ acts transitively. The contractibility of $X'/G$ follows easily from this fact, for $X = \A_p(G)$, $\S_p(G)$ or $\B_p(G)$.
\end{proof}

Now we prove some particular cases for which $\A_p(G)'/G$ is contractible. These results will not hold in general for the posets $\S_p(G)'/G$ and $\B_p(G)'/G$ given that the techniques used here strongly use the fact that we are dealing with chains of elementary abelian $p$-subgroups.

Before that, we need a basic tool of fusion theory of groups, the Alperin's Fusion Theorem. We just require a weaker version of this theorem, which says that we can control the fusion inside a fixed Sylow $p$-subgroup via the normalizer of its non-trivial subgroups.

\begin{theorem}\label{theoremAlpFT}
Let $P\in\Syl_p(G)$ and suppose that $A,A^g\leq G$. Then there exist subgroups $Q_1,\ldots,Q_n\leq P$ and elements $g_i\in N_G(Q_i)$ such that:
\begin{enumerate}
\item $A^{g_1\ldots g_{i-1}}\leq Q_i$ for $1\leq i\leq n$,
\item $c_g|_{A} = c_{g_1\ldots g_n}|_A$.
\end{enumerate}
In particular, $A^g = A^{g_1\ldots g_n}$.
\end{theorem}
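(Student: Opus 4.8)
The plan is to prove the theorem by induction on $|A|$, in \emph{decreasing} order: I will assume as inductive hypothesis that every conjugation $c_x|_B$ between subgroups $B,B^x\le P$ with $|B|>|A|$ already admits such a factorization, and deduce the statement for subgroups of order exactly $|A|$. The base case is $A=P$, where $A^g\le P$ forces $A^g=P$, hence $g\in N_G(P)$, and we take $n=1$, $Q_1=P$, $g_1=g$. Note that the conclusion only constrains the conjugation map $c_g$ \emph{on} $A$ (not the element $g$ itself), which gives the freedom to alter $g$ by elements normalizing the relevant groups; this is what will make the Sylow manipulations below harmless.

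Say that $A\le P$ is \emph{fully normalized} if $N_P(A)\in\Syl_p(N_G(A))$; exactly as for fully centralized subgroups, every subgroup of $G$ has a conjugate inside $P$ that is fully normalized (choose a conjugate $A^g\le P$ maximizing $|N_P(A^g)|$). The key step is the following special case: \emph{if $A^g\le P$ is fully normalized, then $c_g|_A$ factors.} To see this, set $V=N_P(A)$; since $A<P$ normalizers grow, so $A<V$ and $|V|>|A|$. As $V$ normalizes $A$, the conjugate $V^g$ normalizes $A^g$, so $V^g$ is a $p$-subgroup of $N_G(A^g)$. Because $N_P(A^g)\in\Syl_p(N_G(A^g))$, Sylow's theorem provides $c\in N_G(A^g)$ with $(V^g)^c\le N_P(A^g)\le P$, that is $V^{gc}\le P$. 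Now $c_{gc}$ conjugates the \emph{larger} subgroup $V$ into $P$, so by the inductive hypothesis it factors through subgroups $Q_1,\ldots,Q_n\le P$ and $g_i\in N_G(Q_i)$ with $V^{g_1\cdots g_{i-1}}\le Q_i$; restricting to $A\le V$ keeps every containment $A^{g_1\cdots g_{i-1}}\le Q_i$ valid and yields a factorization of $c_{gc}|_A$ ending at $A^{gc}=A^g$. Finally $c_g=c_{c^{-1}}\circ c_{gc}$, so appending the single step $Q_{n+1}=A^g$, $g_{n+1}=c^{-1}\in N_G(A^g)$ (legitimate since $A^g\le A^g\le P$) produces a factorization of $c_g|_A$.

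To remove the fully normalized hypothesis I will splice two such factorizations. Given arbitrary $A,A^g\le P$, write $B=A^g$ and pick $m\in G$ with $B^m\le P$ fully normalized. The key step, applied to $A$ and to $B$ (both of order $|A|$), factors $c_{gm}|_A\colon A\to B^m$ and $c_m|_B\colon B\to B^m$; reversing the latter factorization — which preserves the containment conditions precisely because each $g_i$ normalizes $Q_i$, so $B^{g_1\cdots g_{i-1}}\le Q_i$ upgrades to $B^{g_1\cdots g_i}\le Q_i$ — factors $c_{m^{-1}}|_{B^m}\colon B^m\to B$. Concatenating the factorization of $c_{gm}|_A$ with that of $c_{m^{-1}}|_{B^m}$ and using $c_g=c_{m^{-1}}\circ c_{gm}$ gives the desired factorization of $c_g|_A$.

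The main obstacle is organizing the induction so that this last splicing is not circular: the reduction to a fully normalized target invokes the key step for \emph{two} subgroups of the same order $|A|$, so the induction must be run over all subgroups of a fixed order simultaneously (downward on the order), rather than as a single maximal counterexample on $|A|$; then the key step is established for all order-$|A|$ subgroups, using only the hypothesis for larger orders, before any splicing takes place. The other delicate point is the Sylow bookkeeping in the key step: one must push $V^g$ back inside the fixed $P$ using an element of the local subgroup $N_G(A^g)$ — which fixes $A^g$ setwise and hence does not disturb the conjugation being tracked — and this is exactly where full normalizedness of the target is needed, to guarantee that $N_P(A^g)$ is a Sylow $p$-subgroup of $N_G(A^g)$ and so large enough to absorb $V^g$.
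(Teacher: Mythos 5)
Your proof is correct: the paper itself gives no proof of Theorem \ref{theoremAlpFT} (it quotes this weak form of Alperin's fusion theorem and refers to \cite{AKO11,Cra11}), and your argument --- downward induction on $|A|$, the key step for a fully normalized target obtained by a Sylow adjustment inside $N_G(A^g)$ applied to the strictly larger subgroup $V=N_P(A)$, then the splice through a fully normalized conjugate, with the reversal of a factorization legitimized by $g_i\in N_G(Q_i)$ upgrading $B^{g_1\cdots g_{i-1}}\le Q_i$ to $B^{g_1\cdots g_i}\le Q_i$ --- is essentially the standard proof found in those references, and your care about running the induction over all subgroups of a given order simultaneously correctly avoids the potential circularity in the splicing step. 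Note only that the hypothesis ``$A,A^g\leq G$'' in the statement is evidently a typo for $A,A^g\leq P$, which you correctly assumed, since condition (1) already forces $A\leq Q_1\leq P$ and $A^g\leq Q_n\leq P$.
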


Here $c_g:H\to K$ denotes the conjugation map $c_g(h) = h^g$. A more general version of this theorem asserts that the $Q_i$'s can be taken to be essential subgroups of $P$ or even $P$. See \cite{AKO11, Cra11} for more details.

From now on, fix a Sylow $p$-subgroup $P\leq G$ and let $\Omega = \Omega_1(Z(P))$ be the subgroup generated by the central elements in $P$ of order $p$. We will use this subgroup to construct homotopies and extract beat points. Note that if $H\leq G$, $r_p(H)\leq r_p(G)$, and $r_p(G) = r_p(Q)$ if $Q$ is a Sylow $p$-subgroup of $G$.

We will deal with orbits of chains $\overline{c}\in \A_p(G)'/G$ and always assume that $c$ is chosen to be a representative of its orbit inside $\A_p(P)'$.

\begin{remark}
If $A\in \A_p(P)$ is a maximal elementary abelian $p$-subgroup of $P$, then $\Omega\leq A$. Thus, $\Omega$ is contained in the intersection of all maximal elementary abelian $p$-subgroups of $P$.
\end{remark}

We will prove some particular cases of Conjecture \ref{strongerConjecture} when the difference between the $p$-rank of $G$ and the $p$-rank of $Z(P)$ is small. Roughly, the difference $r_p(G) - r_p(\Omega)$, which only depends on the Sylow $p$-subgroup $P$ and its center, says how many non-central elements of order $p$ in $P$ there are. For example, if $r_p(G) - r_p(\Omega) = 0$ then $\Omega_1(P) = \Omega_1(Z(P))$, i.e. all elements of order $p$ in $P$ are central in $P$.

First we will prove the case  $r_p(G) - r_p(\Omega) \leq 1$. Later, we will prove some particular cases of the conjecture when $r_p(G) - r_p(\Omega) \leq 2$. In particular we will deduce that $\A_p(G)'/G$ is contractible when $|P|\leq p^4$. 

\begin{theorem}\label{theoremLowDifference}
If $r_p(G) - r_p(\Omega)\leq 1$, the finite space $\A_p(G)'/G$ is contractible.
\end{theorem}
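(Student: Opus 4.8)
The plan is to reduce to the case $r_p(G) - r_p(\Omega) = 1$ and then prove that $\A_p(G)'/G$ is \emph{conically contractible} with cone point the one-element chain $\overline{(\Omega)}$, mirroring the strategy of Theorem~\ref{theoremApGGcontractil} but carried out at the level of chains. If the difference is $0$, then every element of order $p$ in $P$ would generate together with $\Omega$ an elementary abelian subgroup of rank $r_p(\Omega)+1 > r_p(G)$ unless it already lies in $\Omega$; hence $\Omega_1(P) = \Omega$ is abelian and the contractibility of $\A_p(G)'/G$ already follows from Proposition~\ref{propMxlRank}. So I would assume the difference is exactly $1$. As in Theorem~\ref{theoremApGGcontractil}, I work with representatives inside $\A_p(P)'$ and, using the centralizer trick of that proof together with Alperin's theorem (Theorem~\ref{theoremAlpFT}) and the inclusion-reversing behaviour of centralizers, I first arrange that each orbit of $\A_p(G)'/G$ has a representative chain $c = (A_0 < \dots < A_r)$ with $A_r \leq P$ fully centralized.

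The engine of the argument is the closure operator $A \mapsto A\Omega$ on $\A_p(P)$. Since $\Omega \leq Z(P)$, each $A\Omega$ is again elementary abelian and contains $\Omega$, and the hypothesis forces a rigid structure: writing $m = r_p(\Omega)$, every $A\Omega$ has rank $m$ or $m+1$, and since the saturations $A_0\Omega \leq \dots \leq A_r\Omega$ are nested, two of them of rank $m+1$ must coincide. Hence the saturated chain takes at most the two values $\Omega$ and a single subgroup $B := A_r\Omega$ of rank $m+1$, with $\Omega \leq B$ of corank one, and $c$ splits into a \emph{central part} lying in $\Omega$ and a part contained in $B$. This dichotomy is exactly what the rank condition buys, and it is what permits $\Omega$ to be woven into a chain as a genuine member.

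Using this, I would define the contracting map $f(\overline c)$ as the orbit of the chain obtained by inserting $\Omega$ below the non-central part of $c$ and capping with the canonical top $M(A_r) := \Omega_1(Z(\Omega_1(C_P(A_r))))$, the intersection of the maximal elementary abelian subgroups of $P$ containing $A_r$, which contains $\Omega$. The two-value structure guarantees that the resulting set is a genuine chain and that $\Omega$ occurs in it \emph{as a member}, so that $\overline c \leq f(\overline c) \geq \overline{(\Omega)}$ holds in the face poset; combined with order-preservation of $f$ this yields $\id \leq f \geq \cte_{\overline{(\Omega)}}$ and hence conical contractibility. Well-definition and order-preservation are to be verified as in Theorem~\ref{theoremApGGcontractil}: if $c$ and $c^g$ both lie in $\A_p(P)'$ with fully centralized tops, a suitable centralizer element returns both the capping subgroup and $\Omega$ to their canonical positions, using that $M(\cdot)$ and $\Omega_1(Z(\Omega_1(-)))$ are characteristic subgroups.

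I expect the main obstacle to be precisely this \emph{fusion control of chains}: showing that the inserted copy of $\Omega$ and the top $M(A_r)$ depend only on the orbit $\overline c$ and not on the chosen representative, and that after the canonical enlargement $\Omega$ is genuinely comparable to every member of the chain, so that $f(\overline c) \geq \overline{(\Omega)}$ truly holds in the subdivision poset rather than merely $\Omega \leq$ some member. The rank hypothesis is indispensable here, since without the collapse of the saturated chain to two values there is no reason for $\Omega$ to fit into a chain at all. Once contractibility of $\A_p(G)'/G$ is established, the analogous statement for every iterated subdivision follows from Corollary~\ref{coroAllContractible} and Corollary~\ref{coroAllEquivalent}.
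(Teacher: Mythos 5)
There is a genuine gap, and it sits exactly where you flagged it: the insertion of $\Omega$ into an arbitrary chain. Your two-value collapse is correct for the saturations $A_0\Omega\leq\cdots\leq A_r\Omega$, but it says nothing about comparability of $\Omega$ with the members $A_i$ themselves: a non-central member satisfies $A_i\leq B=A_i\Omega$ while in general $\Omega\nleq A_i$ and $A_i\nleq\Omega$, so the set obtained by ``inserting $\Omega$ below the non-central part'' is not a chain, and your $f$ is not a map into $\A_p(G)'/G$. Worse, the global plan of a conical contraction $\id\leq f\geq\cte_{\overline{(\Omega)}}$ is impossible, not merely unproved: such a zigzag forces every orbit to admit a common upper bound with $\overline{(\Omega)}$, i.e.\ every chain would have to be conjugate into a chain containing a conjugate of $\Omega$ as a \emph{member}. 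Take $G=\SS_4$, $p=2$, so $P\simeq D_8$, $r_p(G)=2$, $r_p(\Omega)=1$, and the theorem's hypothesis holds. For the transposition $t=(1\,3)\in P$, the subgroup $\gen{t}$ is fully centralized, has order $2$, and is not conjugate to $\Omega=\gen{(1\,3)(2\,4)}$ (cycle types differ), so no conjugate of $\gen{t}$ is comparable with any conjugate of $\Omega$; hence $\overline{(\gen{t})}$ and $\overline{(\Omega)}$ have no common upper bound in $\A_2(G)'/G$, and no choice of cap $M(A_r)$ or fusion argument can rescue $f$ as a self-map of the whole poset.

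The paper's proof devotes most of its effort to removing precisely this obstruction before any insertion is attempted. It sets $\A=\{A\in\A_p(P): A \text{ fully centralized},\ \Omega\nleq A,\ \overline{A}\nleq\overline{\Omega}\}$ (your $\gen{t}$ above lies in $\A$) and deletes all orbits of chains through members of $\A$ by an ordered sequence of beat-point extractions: for each such $A$, the orbits containing $\overline{(A)}$ but not $\overline{(A\Omega)}$ are removed from top to bottom as up beat points, the key being that $A\Omega$ is the \emph{unique} maximal element of $\A_p(P)$ containing both $A$ and $\Omega$ --- this is what the hypothesis $r_p(G)-r_p(\Omega)=1$ actually buys, rather than the weavability of $\Omega$ into chains; the remaining orbits through $\overline{(A)}$ are then removed from bottom to top as down beat points. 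Only on the resulting strong deformation retract, whose elements have the normal forms $\overline{(C_1<\cdots<C_s)}$, $\overline{(C_1<\cdots<C_s<\Omega)}$, $\overline{(C_1<\cdots<C_s<B)}$, $\overline{(C_1<\cdots<C_s<\Omega<B)}$ with $C_s<\Omega<B$, is the insert-$\Omega$ homotopy defined; well-definedness uses the centralizer trick plus Alperin's Fusion Theorem \ref{theoremAlpFT} applied inside $C_G(C_s)$, where maximality of $B$ gives the punchline (if $k\in N_{C_G(C_s)}(Q)$ with $B,B^k\leq Q$ and $\Omega^k\neq\Omega$, then $\Omega^k\leq Z(Q)$ centralizes $B$, so $\Omega^k\leq B$ and $B=\Omega\Omega^k=B^k$). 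Your Alperin/characteristic-subgroup considerations parallel this second stage, and your reduction of the difference-$0$ case to Proposition \ref{propMxlRank} is fine; what is missing is the entire first stage, without which the contracting map does not exist.
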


\begin{proof}
The case $r_p(G) = r_p(\Omega)$ holds by Proposition \ref{propMxlRank} since $\A_p(G)\subseteq \S_p(G)$ is a strong deformation retract. Hence, we may assume that $r_p(G) - r_p(\Omega) = 1$. Consequently, if $A\in \A_p(P)$ does not contain $\Omega$, $A\Omega\in \A_p(P)$ is maximal.

Consider the set $\A = \{ A\in \A_p(P) : A \text{ is fc, }\Omega\nleq A \text{ and } \overline{A}\nleq \overline{\Omega}\}$. By the remark above, $\A$ does not contain maximal elements of $\A_p(P)$. 

Take representatives of conjugacy classes $A_1,\ldots,A_k\in \A$ such that if $A\in\A$ then $\overline{A} = \overline{A_i}$ for some $i$, and $\overline{A_i} < \overline{A_j}$ implies $j < i$. We will prove first that the subposet of orbits of chains $\overline{c}$ such that $\overline{A_i}\nleq \overline{c}$ for all $i$ is a strong deformation retract of $\A_p(G)'/G$. In order to do that, assuming we have extracted all orbits of chains containing $\overline{(A_j)}$ for $j < i$, we extract first all orbits containing $\overline{(A_i)}$ but not $\overline{(A_i\Omega)}$. Later we will extract those containing both $\overline{(A_i)}$ and $\overline{(A_i\Omega)}$.

Let $1 \leq i\leq k$ and let $A = A_i$. Assume we have extracted all possible orbits $\overline{c}$ such that $\overline{(A)}\leq \overline{c}$ but $\overline{(A\Omega)}\nleq \overline{c}$ as up beat points. If one of them still remains, take a maximal one, say $\overline{c}$.

Note that $\overline{c\cup (A\Omega)}$ was not extracted because $\overline{A\Omega}\neq \overline{A_j}$ for all $j$. We affirm that $\overline{c}$ is an up beat point covered by $\overline{c\cup (A\Omega)}$. Let $c = (B_1 < \ldots< B_s < A)$, where $s$ could be $0$, and let $d = c\cup (A\Omega) = (B_1 < \ldots < B_s < A < A\Omega)$. If $\overline{c} \prec \overline{d'}$, the representative $d'$ can be taken to have the form $d' = (B_1 < \ldots < B_s < A < (A\Omega)^g)$ by maximality of $\overline{c}$. Since $(A\Omega)^g\in \A_p(P)$ is maximal and contains both $A$ and $\Omega$, we have that $A\Omega = (A\Omega)^g$. Thus, $\overline{d} = \overline{d'}$ and $\overline{c}$ is an up beat point covered by $\overline{d}$.

At this point, we have extracted all elements containing $\overline{(A)}$ but not $\overline{(A\Omega)}$. We affirm that we can extract from bottom to top all elements containing both $\overline{(A)}$ and $\overline{(A\Omega)}$ because they are down beat points at the moment of their extraction. Take one of these, say $\overline{c}$ such that $\overline{A}\notin \overline{c'}$ for all $\overline{c'}<\overline{c}$ in the remaining subposet. Then $c = (B_1 < \ldots < B_s < A < A\Omega)$ and clearly $\overline{c}$ is a down beat point covering uniquely the element $\overline{(B_1 < \ldots < B_s < A\Omega)}$.

The remaining subposet is a strong deformation retract of $\A_p(G)'/G$ and has no orbit $x$ with $\overline{(A_j)}\leq x$ for $j\leq i$.

Therefore, we have a strong deformation retract of $\A_p(G)'/G$ whose elements have the following possibles representations:
\begin{equation}\label{eq1}
\overline{(C_1 < \ldots < C_s < \Omega < B)}
\end{equation}
\begin{equation}\label{eq2}
\overline{(C_1 < \ldots < C_s  < B)}
\end{equation}
\begin{equation}\label{eq3}
\overline{(C_1 < \ldots < C_s < \Omega)}
\end{equation}
\begin{equation}\label{eq4}
\overline{(C_1 < \ldots < C_s)}
\end{equation}
with $s\geq 0$ and $C_s < \Omega < B\leq P$. The aim is to prove that the map that includes $\Omega$ between the $C_s$'s and the $B$'s is well-defined and order-preserving. That is, if $\overline{(C_1 < \ldots < C_s)} = \overline{(C_1^g < \ldots < C_s^g)}$ then $\overline{(C_1 < \ldots < C_s < \Omega)} = \overline{(C_1^g < \ldots < C_s^g < \Omega)}$, and analogously with the elements of the form $\overline{(C_1 < \ldots < C_s  < B)}$.

Since $C_P(\Omega) = P$, $\Omega^g = \Omega$ if $\Omega^g\leq P$ is fully centralized. Thus, if $C_s, C_s^g\leq \Omega$ and $\overline{(C_1 < \ldots < C_s)} = \overline{(C_1^g < \ldots < C_s^g)}$, the subgroups $C_s,C_s^g$ are fully centralized and
\begin{align*}
\overline{(C_1 < \ldots < C_s < \Omega)} & = \overline{(C_1^g < \ldots < C_s^g < \Omega^g)}\\
& = \overline{(C_1^g < \ldots < C_s^g < \Omega^{gh})}\\
& =  \overline{(C_1^g < \ldots < C_s^g < \Omega)}.
\end{align*}
Here, $h\in C_G(C_s^g)$ is such that $\Omega^{gh}\leq C_P(C_s^g) = P$ is fully centralized.

For the other case, as before, assume that 
\[\overline{(C_1 < \ldots < C_s < B)} = \overline{(C_1^g < \ldots < C_s^g < B_{ }^g)}\]
with $C_s < \Omega < B\leq P$ and $C_s^g < \Omega < B^g\leq P$. By conjugating by $g^{-1}$ we obtain
\[\overline{(C_1^g < \ldots < C_s^g <\Omega < B^g)} = \overline{(C_1 < \ldots < C_s <\Omega^{g^{-1}} < B)}\]
Take $h\in C_G(C_s)$ such that $\Omega^{g^{-1}h}\leq C_P(C_s) = P$ is fully centralized. Therefore, $\Omega^{g^{-1}h} =\Omega$. Given that $B^{h}$ may not be a subgroup of $P$, we take a $h'\in C_G(\Omega)$ such that $B^{hh'}\leq P$. In particular, $h,h'\in C_G(C_s)$ and we have to prove that
\[\overline{(C_1 < \ldots < C_s <\Omega < B)} = \overline{(C_1 < \ldots < C_s <\Omega < B^{hh'})}\]
with $hh'\in C_G(C_s)$. We use Alperin's Fusion Theorem \ref{theoremAlpFT} inside the group $C_G(C_s)$ with $B,B^{hh'}\leq P\in \Syl_p(C_G(C_s))$. Hence, it remains to see the case $hh'\in N_{C_G(C_s)}(Q)$ where $Q\leq P$ and $B,B^{hh'}\leq Q$. Denote $k = hh'$. If $k\in N_G(\Omega)$ we are done, so assume $\Omega\neq \Omega^k$. Observe that $\Omega,\Omega^k\leq B^k$. Since $\Omega\leq Z(Q)$, $\Omega^k\leq Z(Q^k) = Z(Q)$ and $\Omega^k$ commute with all elements of order $p$ inside $Q$. In particular it commutes with $B$, and by maximality of this, $\Omega^k \leq B$. The condition $\Omega\neq \Omega^k$ implies $B = \Omega\Omega^k = B^k$ by an order argument. In any case, we have proved that $$\overline{(C_1 < \ldots < C_s < \Omega < B)} = \overline{(C_1 < \ldots < C_s < \Omega < B^k)}$$ as desired.

To complete the proof, note that the map that includes $\Omega$ inside the orbits of chains $\overline{c}$ represented as (\ref{eq1}), (\ref{eq2}), (\ref{eq3}), (\ref{eq4}) is an order-preserving map that satisfies
\[\overline{c} \leq \overline{c\cup (\Omega)} \geq \overline{(\Omega)}.\]
In consequence, $\A_p(G)'/G$ is contractible because it has a strong deformation retract which is.
\end{proof}

We obtain the following immediate corollary.

\begin{corollary}
If $\A_p(G)'/G$ has height $1$ then it is a contractible finite space.
\end{corollary}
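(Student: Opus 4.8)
The plan is to reduce the hypothesis to the numerical condition of Theorem~\ref{theoremLowDifference}. The key observation is that the height-$1$ hypothesis on $\A_p(G)'/G$ translates \emph{exactly} into $r_p(G)=2$, after which the corollary becomes a one-line consequence of the preceding theorem.

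First I would compute the height of $\A_p(G)'/G$. The longest chain in $\A_p(G)$ has the form $\ZZ_p < \ZZ_p^2 < \cdots < \ZZ_p^{r_p(G)}$, so $\A_p(G)$ has height $r_p(G)-1$; passing to the subdivision does not change the height, so $\A_p(G)'$ also has height $r_p(G)-1$. Now I claim the orbit poset has the same height. For the lower bound, a maximal flag of chains $c_0\subsetneq c_1\subsetneq\cdots\subsetneq c_{r_p(G)-1}$ in $\A_p(G)'$ maps to a chain of the same length in $\A_p(G)'/G$, because conjugation preserves the cardinality of a chain, so $c_i$ and $c_{i+1}$ (having different sizes) lie in distinct orbits and the images stay strictly increasing. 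For the upper bound, any chain $\overline{d_0}<\overline{d_1}<\cdots<\overline{d_n}$ in $\A_p(G)'/G$ forces $|d_0|<|d_1|<\cdots<|d_n|$ by the same cardinality argument, and since every chain in $\A_p(G)$ has at most $r_p(G)$ members we get $n\le r_p(G)-1$. Hence $\A_p(G)'/G$ has height exactly $r_p(G)-1$, so the hypothesis ``height $1$'' is equivalent to $r_p(G)=2$.

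Second, I would observe that $\Omega=\Omega_1(Z(P))$ is always non-trivial: since $p\mid |G|$, the Sylow $p$-subgroup $P$ is non-trivial, so $Z(P)\neq 1$ and therefore $r_p(\Omega)\ge 1$. Combining this with $r_p(G)=2$ gives $r_p(G)-r_p(\Omega)\le 2-1=1$, and the conclusion follows directly from Theorem~\ref{theoremLowDifference}. The only genuine content is the height computation, and the main (minor) obstacle is checking that quotienting by $G$ neither raises nor lowers the height; both directions rest on the single fact that conjugation preserves the number of elements of a chain, so chains of different lengths can never be identified in the orbit poset. Once this is in place the corollary is immediate.
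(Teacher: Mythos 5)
Your proof is correct and takes essentially the same route the paper intends: the corollary is stated there as an immediate consequence of Theorem~\ref{theoremLowDifference}, the implicit argument being precisely your identification of the height-$1$ hypothesis with $r_p(G)=2$ together with $r_p(\Omega)\geq 1$ (a non-trivial $p$-group has non-trivial center, and $p$ divides $|G|$ by standing assumption). Your cardinality argument showing that subdivision and passage to the orbit poset preserve height correctly fills in the details the paper leaves unstated.
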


By following the ideas of the theorem above, we focus now our attention in the case of $r_p(G)-r_p(\Omega) = 2$. We do not prove this case in general. Nevertheless, the techniques used here allow us to prove that $\A_p(G)'/G$ is contractible if $|G|_p\leq p^4$ or $|G|_p \leq p^{r_p(G)+1}$. Moreover, we find a strong deformation retract of $\A_p(G)'/G$ which can be used to prove it is contractible. We use again Alperin's Fusion Theorem \ref{theoremAlpFT}.

We begin with some preliminary lemmas.

\begin{lemma}\label{lemmaFCOmega}
Let $G$, $p$ and $P$ be as above. If $C\leq P$ is fully centralized, $C\Omega\leq P$ is fully centralized and $C_P(C\Omega) = C_P(C)$.
\end{lemma}

\begin{proof}
If $A,B\subseteq G$, then it holds $C_G(AB) = C_G(A)\cap C_G(B)$. In this way, $C_P(C\Omega) = C_P(C)\cap C_P(\Omega) = C_P(C)$ since $\Omega\leq Z(P)$. Let $g\in G$. Then $$|C_P((C\Omega)^g))| = |C_P(C^g)\cap C_P(\Omega^g)|\leq |C_P(C^g)|\leq |C_P(C)| = |C_P(C\Omega)|.$$
\end{proof}

\begin{remark}
If $r_p(G) - r_p(\Omega) = 2$, any maximal element of $\A_p(P)$ has $p$-rank $r_p(G)$ or $r_p(G)-1$. 
\end{remark}

\begin{lemma}\label{lemmaLowerThan2}
Let $G$, $p$ and $P$ be as above. Let $\P\subseteq \A_p(G)'/G$ be the following subposet.
\[\P = \{x\in \A_p(G)'/G:\text{if }x = \overline{c}\text{, with }c\in\A_p(P)' \text{, and }A\in c \text{ is fc, then } A\leq \Omega \text{ or } \Omega\leq A\}.\]
If $r_p(G) - r_p(\Omega) \leq 2$, $\P\subseteq \A_p(G)'/G$ is a strong deformation retract (as finite spaces).
\end{lemma}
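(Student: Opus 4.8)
The plan is to mirror the beat-point removal carried out in Theorem \ref{theoremLowDifference}, but now retracting only as far as $\P$ instead of all the way to a point. Fix the Sylow $p$-subgroup $P$ and choose every representative chain inside $\A_p(P)'$. Call a subgroup $A\leq P$ \emph{bad} if it is fully centralized with $A\nleq\Omega$ and $\Omega\nleq A$; since for a fully centralized $A$ one has $\overline{A}\leq\overline{\Omega}$ if and only if $A\leq\Omega$ (because $C_P(\Omega)=P$), an orbit $\overline{c}$ lies outside $\P$ exactly when some bad subgroup occurs in its chosen representative. I would enumerate representatives $A_1,\dots,A_k$ of the conjugacy classes of bad subgroups, ordered so that $\overline{A_i}<\overline{A_j}$ forces $j<i$, and delete the offending orbits one class at a time, from conjugacy-larger to conjugacy-smaller.

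For a fixed $A=A_i$, Lemma \ref{lemmaFCOmega} guarantees that $A\Omega\leq P$ is again fully centralized with $C_P(A\Omega)=C_P(A)$, and since $\Omega\leq A\Omega$ the subgroup $A\Omega$ is not bad; hence any chain of the form $c\cup(A\Omega)$ survives every removal. Following the two-phase scheme of Theorem \ref{theoremLowDifference}, I would first delete, as up beat points, the remaining orbits $\overline{c}$ containing $\overline{(A)}$ but not $\overline{(A\Omega)}$, pairing each maximal such $\overline{c}=\overline{(B_1<\dots<B_s<A)}$ with its cover $\overline{c\cup(A\Omega)}$; and then delete, from the bottom up, the orbits containing both $\overline{(A)}$ and $\overline{(A\Omega)}$, which are down beat points covering $\overline{(B_1<\dots<B_s<A\Omega)}$ exactly as in the rank-one case. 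After all classes are processed the surviving poset is precisely $\P$, and being obtained by successive removal of orbit beat points it is a strong deformation retract of $\A_p(G)'/G$.

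The main obstacle is the up-beat-point step. It requires that every cover of such a maximal $\overline{c}$ have the form $\overline{c\cup E}$ with $E$ a $G$-conjugate of $A\Omega$ satisfying $A<E\leq C_P(A)$, and that all these covers coincide with $\overline{c\cup(A\Omega)}$. In Theorem \ref{theoremLowDifference} this was immediate because $r_p(G)-r_p(\Omega)=1$ made $A\Omega$ maximal in $\A_p(P)$, so any $E$ containing both $A$ and $\Omega$ equalled $A\Omega$. Under the weaker hypothesis $r_p(G)-r_p(\Omega)\leq 2$ the subgroup $A\Omega$ need no longer be maximal, so I would localize the comparison to the centralizer $C_G(A)$, in which $A$ is central, $C_P(A)$ is a Sylow $p$-subgroup, and any conjugating element automatically fixes the whole lower chain $(B_1<\dots<B_s<A)$; it then suffices to carry $E$ to $A\Omega$ inside $C_G(A)$. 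Applying Alperin's Fusion Theorem \ref{theoremAlpFT} there reduces matters to a conjugating element $k$ normalizing an intermediate $Q\leq C_P(A)$ with $E,A\Omega\leq Q$. Since $\Omega\leq Z(P)$ gives $\Omega^k\leq Z(Q)$, the subgroup $\Omega^k$ centralizes the order-$p$ elements of $Q$, and the rank bound $r_p(Q)\leq r_p(\Omega)+2$ forces $\Omega^k\leq A\Omega$; a short order argument on these elementary abelian groups of rank at most $r_p(\Omega)+2$ then yields $E^k=A\Omega$. This fusion-and-rank bookkeeping, paralleling the closing computation of Theorem \ref{theoremLowDifference} with the maximal relevant rank raised by one, rather than any new conceptual ingredient, is the technical heart of the lemma.
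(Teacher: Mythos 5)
Your global strategy --- processing the bad classes from conjugacy-larger to conjugacy-smaller, removing the offending orbits in two beat-point phases, and resolving fusion ambiguities with Alperin's Fusion Theorem inside $C_G(A)$ --- is the same as the paper's, but two of your concrete claims fail, and they are exactly where the paper's proof of Lemma \ref{lemmaLowerThan2} does its real work. First, the assertion that ``$A\Omega$ is not bad, hence any chain of the form $c\cup(A\Omega)$ survives every removal'' is wrong: badness is a property of individual subgroups, not of conjugacy classes, and the class of $A\Omega$ may contain a fully centralized subgroup of $P$ that does \emph{not} contain $\Omega$. This is precisely the paper's Case 1, where $(A\Omega)^g\in\A$ for some $g\in G$. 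Since the class of $A\Omega$ lies strictly above that of $A$ (as $\Omega\nleq A$ gives $r_p(A\Omega)>r_p(A)$), your own ordering means that all orbits containing $\overline{(A\Omega)}$ were deleted at an earlier stage, so your designated cover $\overline{c\cup(A\Omega)}$ is no longer in the poset when you need it. The paper's fix is a rank computation showing that in this case $(A\Omega)^g\Omega$ has rank $r_p(G)$, from which it deduces $C_P(A)=C_P(D)$ for $D=\Omega_1(C_P(A))$, hence a \emph{unique} maximal element $D$ of $\A_p(P)$ above $A$; the up-beat pairing is then with $D$, not with $A\Omega$.

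Second, your description of the maximal remaining orbits containing $\overline{(A)}$ but not $\overline{(A\Omega)}$ as $\overline{(B_1<\dots<B_s<A)}$, with $A$ on top, fails as soon as $A\Omega$ is not maximal (i.e.\ $r_p(A\Omega)=r_p(G)-1$): a chain $(B_1<\dots<B_s<A<B)$ with $B\in\A_p(P)$ maximal of rank $r_p(G)$ contains $\overline{(A)}$, contains no conjugate of $A\Omega$ (by rank), survives all earlier removals since $\Omega\leq B$ makes $B$ good, and strictly dominates your candidates. Extracting these orbits forces you to \emph{insert} $A\Omega$ between $A$ and $B$ and to prove uniqueness of the cover, namely $\overline{(B_1<\dots<A<A\Omega<B)}=\overline{(B_1<\dots<A<A\Omega<B^g)}$ for $g\in C_G(A)$; this is where the paper applies Alperin, and the fusion control is aimed at the \emph{top} element $B$: when $(A\Omega)^{g_i}\neq A\Omega$ one gets $(A\Omega)(A\Omega)^{g_i}=\Omega_1(Z(Q_i))$ of rank exactly $r_p(G)$, so $\A_p(Q_i)$ has a unique maximal element and $B_i=B_{i-1}$. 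Your Alperin step aims at the wrong target (carrying $E\sim A\Omega$ to $A\Omega$ with no $B$ present), and its key inference is a non sequitur: $\Omega^k\leq Z(Q)$ only gives that $\Omega^k(A\Omega)$ is elementary abelian of rank at most $r_p(\Omega)+2$, which does not force $\Omega^k\leq A\Omega$ when $r_p(A\Omega)\leq r_p(\Omega)+1$. So the proposal has genuine gaps both at the Case-1 pairing and at the cover-uniqueness step, and closing them leads essentially to the case division and computations of the paper's proof.
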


\begin{proof}
If $r_p(G) - r_p(\Omega) \leq 1$, then the proof is similar to the one of Theorem \ref{theoremLowDifference}, so we may assume that $r_p(G) - r_p(\Omega) = 2$. Let $r = r_p(G)$ and let $$\A = \{ A\in \A_p(P) : A \text{ is fc, }\Omega\nleq A \text{ and } \overline{A}\nleq \overline{\Omega}\}.$$ Clearly, $\A\cap \Max(\A_p(P)) = \emptyset$ and $\Omega^g\notin \A$ for any $g\in G$.

Note that $\P$ is the subposet of orbits of chains not containing $\overline{(A)}$ for $A\in \A$. Therefore, we want to extract elements containing $\overline{(A)}$ with $A\in \A$, as beat points.

Assume inductively we have extracted all orbits of chains containing $\overline{(A)}$ for $A\in \A$ of $p$-rank at least $r'+1$, with $1\leq r' \leq r-1$. Take $A\in \A$ of $p$-rank $r'$. Then either $(A\Omega)^g\in \A$ for some $g\in G$, or else $(A\Omega)^g\notin \A$ for all $g\in G$.

\textit{Case 1:} there exists $g\in G$ with $(A\Omega)^g\in \A$. Thus, $(A\Omega)^g$ is fully centralized and does not contain $\Omega$. Since $r_p(\Omega) = r-2$, 
\begin{align*}
r_p((A\Omega)^g\Omega) & = r_p((A\Omega)^g) + r_p(\Omega) - r_p((A\Omega)^g\cap \Omega)\\
& \geq r_p((A\Omega)^g) + r_p(\Omega) - (r_p(\Omega)-1)\\
& = r_p(A\Omega) + 1 \\
& = r_p(A) + r_p(\Omega) - r_p(A\cap \Omega) + 1\\
& \geq r_p(A) + (r-2) - (r_p(A)-1) + 1\\
& = r
\end{align*}
That is, $(A\Omega)^g\Omega\in\A_p(P)$ is maximal of $p$-rank $r$. Given that $((A\Omega)^g\Omega)^{g^{-1}}\geq A$, there exists $x\in C_G(A)$ such that $((A\Omega)^g\Omega)^{g^{-1}x}\leq P$ is fully centralized. Therefore, we have the following inequalities
\begin{align*}
|C_P((A\Omega)^g\Omega)| & = |C_P((A\Omega)^g)| = |C_P(A\Omega)| = |C_P(A)|\\
& \geq |C_P(((A\Omega)^g\Omega)^{g^{-1}x})|\\
& \geq |C_P((A\Omega)^g\Omega)|
\end{align*}
which are in fact equalities. Moreover, we deduce that $C_P(A) = C_P(((A\Omega)^g\Omega)^{g^{-1}x})$. Let $D = ((A\Omega)^g\Omega)^{g^{-1}x}$. Observe that $D\in \A_p(P)$ is maximal because of its rank. Hence, $D =  \Omega_1(C_P(D)) = \Omega_1(C_P(A))$ and there is a unique maximal element above $A$.

Now take any orbit $\overline{c}$ containing $\overline{(A)}$ in the remaining subposet. If $\overline{(A<E)}\leq \overline{c}$, for some $E\leq P$ with $r_p(A) < r_p(E) \leq r-1$, by changing $E$ by a conjugate, we may assume that $E\leq P$ is fully centralized. Thus $\overline{E} \notin\overline{\A}$ and it implies $\Omega\leq E$, so that $E \geq A\Omega$. Because $r_p(A\Omega) \geq r-1$, we have the equality $E = A\Omega$, and it is a contradiction. Therefore no orbit of chain over $\overline{(A)}$ can contain an element of rank between $r_p(A)+1$ and $r-1$. Consequently, they have the form $\overline{(A_0 < \ldots < A_s < A)}$ or $\overline{(A_0 < \ldots < A_s <  A<E)}$ with $E\in \A_p(P)$ maximal of rank $r$. By the reasoning above, $E = D$ and $\overline{c}$ is an up beat covered by $\overline{(A_0 < \ldots < A_s < A < D)}$. Hence, we can remove all orbits containing $\overline{(A)}$ but not $\overline{(D)}$ from top to bottom since they are up beat points at the moment of their extraction.

After extracting all these elements, the elements containing $\overline{(A)}$ that remain have the form $\overline{(A_0 < \ldots < A_s < A<D)}$. Each one of them is a down beat point covering the element $\overline{(A_0 < \ldots < A_s < D)}$ if we extract them from bottom to top.

\textit{Case 2:} $(A\Omega)^g\notin \A$ for all $g\in G$. It is easy to see that $r_p(A\Omega) = r$ or $r-1$. In the former case, we can extract all elements containing $\overline{(A)}$ by using the same reasoning of the Case 1. In the latter case, we want to do something similar, but it may happen that $A$ has more than one maximal element of $\A_p(P)$ above it. If it has just one, it is similar to the proof of Case 1. Assume there are more than one maximal element above $A$. Note that they have $p$-rank $r$ because $r_p(A\Omega) = r-1$.

Like before, we extract first from top to bottom all orbits of chains containing $\overline{(A)}$ but not $\overline{(A\Omega)}$. These elements have the forms $\overline{(A_0 < \ldots < A_s < A)}$ and $\overline{(A_0 < \ldots < A_s < A < B)}$ with $B\in \A_p(P)$ maximal. As $\Omega \leq B$, $$\overline{(A_0 < \ldots < A_s <A<B)} < \overline{(A_0 < \ldots < A_s <A< A\Omega < B)}.$$ If $\overline{(A_0 < \ldots < A_s <A<B)} \prec \overline{d}$ at the moment of its extraction, then, after conjugating, $d$ can be taken to have the form $d = (A_0 < \ldots < A_s < A < A\Omega < B^g)$ for some $g\in C_G(A)$. We apply now Alperin's Fusion Theorem on $C_G(A)$ in order to prove that $$\overline{(A_0 < \ldots < A_s <A<A\Omega<B)} = \overline{(A_0 < \ldots < A_s <A<A\Omega<B^g)}$$ with the morphism $c_g:C_G(A)\to C_G(A)$. There exist subgroups $Q_1$, $\ldots$, $Q_r\leq C_P(A)$ and $g_i\in N_{C_G(A)}(Q_i)$ such that $B^{g_1\ldots g_{i-1}}\leq Q_i$ and $c_g|_B = c_{g_r}\circ\ldots\circ c_1|_B$. Let $B_i = B^{g_1\ldots g_i}$ and $B_0 = B$. Since they are all maximal and $A\leq B_i$, we have that $A\Omega\leq B_i$ for all $i$. Therefore it is enough to show that $$\overline{(A_0 < \ldots < A_s <A < A\Omega < B_i)} = \overline{(A_0 < \ldots < A_s <A < A\Omega < B_{i-1})}$$ for all $i \geq 1$. If $(A\Omega)^{g_i} = A\Omega$ we are done. Otherwise, note that $A\Omega\leq \Omega_1(Z(Q_i))$ and so $(A\Omega)^{g_i}\leq \Omega_1(Z(Q_i^{g_i})) = \Omega_1(Z(Q_i))$. It implies that $C = (A\Omega)(A\Omega)^{g_i} = \Omega_1(Z(Q_i))$ and in particular $\A_p(Q_i)$ has a unique maximal element which is $C$. Since $B_i, B_{i-1}\in \A_p(Q_i)$ are maximal elements, we deduce that $B_i = C = B_{i-1}$.

This has shown that $\overline{(A_0 < \ldots < A_s <A<B)}$ is an up beat point covered, at the moment of its extraction, by $\overline{(A_0 < \ldots < A_s <A<A\Omega < B)}$. After extracting it, the element $\overline{(A_0 < \ldots < A_s <A)}$ becomes an up beat point covered by $\overline{(A_0 < \ldots < A_s <A<A\Omega)}$ and we can extract it.

The remaining elements containing $\overline{(A)}$ can be extracted in the same way we did in Case 1.
\end{proof}

\begin{remark}
If $r_p(G)-r_p(\Omega) = 2$, all non-central and fully centralized elements $A\in \A_p(P)$ that could appear as element of a chain $c\in \A_p(P)'$ with $\overline{c}\in\P$ have $p$-rank $r_p(G)$ or $r_p(G)-1$.
\end{remark}

\begin{lemma}\label{lemmaUniqueMaximal}
With the notations of the lemma above, assume $r_p(\Omega) = r_p(G) - 2$. If $A\in \A_p(P)$ has rank $r_p(G)-1$ and it is covered by a unique maximal element in $\A_p(P)$, then $\P$ retracts by strong deformation to the subposet of elements not containing $\overline{(A)}$.
\end{lemma}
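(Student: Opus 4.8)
The plan is to extract the orbit $\overline{(A)}$ from $\P$ by a sequence of beat-point removals in the finite space $\P$, following the pattern of Case 1 in the proof of Lemma \ref{lemmaLowerThan2}. First I would fix a fully centralized representative $A\leq P$ of its orbit. Since $r_p(A) = r_p(G)-1 > r_p(G)-2 = r_p(\Omega)$, the inclusion $A\leq\Omega$ is impossible; we may assume $\overline{(A)}\in\P$ (otherwise $\P$ contains no orbit above $\overline{(A)}$ and there is nothing to prove, as $\P$ is a down-set), so the defining condition of $\P$ forces $\Omega\leq A$. Let $M$ be the unique maximal element of $\A_p(P)$ above $A$. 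Because the intersection of all maximal elementary abelian $p$-subgroups containing $A$ is $\Omega_1(Z(\Omega_1(C_P(A))))$ (see the remark following Theorem \ref{theoremApGGcontractil}), uniqueness gives $M = \Omega_1(Z(\Omega_1(C_P(A))))$, with $r_p(M) = r_p(G)$ and $\Omega\leq A\prec M$.

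The key structural observation is that every orbit of chain in $\P$ lying over $\overline{(A)}$ has one of the two shapes $\overline{(A_0<\cdots<A_s<A)}$ or $\overline{(A_0<\cdots<A_s<A<M)}$. To see this I would represent the chain inside $\A_p(P)'$ with $A$ the fixed fully centralized subgroup, using the inclusion-reversing centralizer trick of Theorem \ref{theoremApGGcontractil} to move any element $E>A$ of the chain into $P$ and make it fully centralized there. Since $\overline{c}\in\P$ and $E$ is fully centralized with $E>A\geq\Omega$, the rank gap $r_p(G)-r_p(\Omega)=2$ leaves no room for a subgroup of intermediate rank between $A$ and a maximal subgroup: $E$ must be maximal of rank $r_p(G)$, and as $A\leq E\leq P$, uniqueness yields $E=M$. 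In particular, at most one element sits above $A$ in such a chain, and it equals $M$.

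With this in hand the extraction proceeds in two phases. First I would remove, from top to bottom, every orbit $\overline{c}=\overline{(A_0<\cdots<A_s<A)}$ containing $\overline{(A)}$ but not $\overline{(M)}$: by the shape analysis and the maximality of such a $\overline{c}$ among the surviving orbits, any $\overline{d}$ covering $\overline{c}$ has a representative $(A_0<\cdots<A_s<A<M')$ with $M'$ maximal over $A$, whence $M'=M$; thus $\overline{c}$ is an up beat point covered uniquely by $\overline{c\cup(M)}$. After these removals, the orbits still lying over $\overline{(A)}$ all have the form $\overline{(A_0<\cdots<A_s<A<M)}$, and I would remove them from bottom to top: at the moment of its extraction such an orbit is a down beat point covering uniquely $\overline{(A_0<\cdots<A_s<M)}$ (obtained by deleting $A$), since the shorter chains obtained by deleting some $A_i$ have already been removed and the chain obtained by deleting $M$ was removed in the first phase.

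Assembling these strong deformation retracts shows that $\P$ retracts onto the subposet of orbits not containing $\overline{(A)}$. The main obstacle is verifying that each removed orbit is genuinely a beat point at the moment of its extraction, i.e. that the relevant covering relation is unique; this is exactly where the hypothesis that $A$ has a \emph{unique} maximal element above it and the rank constraint $r_p(\Omega)=r_p(G)-2$ (which rules out intermediate subgroups) are both indispensable, together with the device of always viewing chain representatives inside the fixed Sylow $p$-subgroup as fully centralized subgroups.
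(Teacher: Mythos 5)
Your proof is correct and follows essentially the same strategy as the paper's: extract the orbits containing $\overline{(A)}$ in two passes, first removing (top to bottom) those not containing the orbit of the unique maximal element $M = \Omega_1(C_P(A))$ as up beat points covered by $\overline{c\cup (M)}$, and then removing (bottom to top) those containing both as down beat points over the chain with $A$ deleted. The only difference is organizational: the paper normalizes representatives so that the conjugate of $\Omega$ sitting below $A$ is literally $\Omega$, and must then observe that the resulting fully centralized conjugates $A^h$ are also covered by a unique maximal element, whereas you keep the member $A$ itself fixed and let the lower part of the chain vary, which harmlessly absorbs that bookkeeping.
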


\begin{proof}
Clearly the result holds if such elements were extracted. Therefore, we may assume that every $A^g\leq P$  fully centralized contains $\Omega$. We can also take $A$ to be fully centralized. If $B = \Omega_1(C_P(A))$, $B\in \A_p(P)$ is the unique maximal element strictly containing $A$. We extract first the elements $x = \overline{(C_0 < \ldots < C_s < \Omega < A)}$, with $s\geq -1$, from top to bottom as up beat points. If $x \prec \overline{d}$, then $\overline{d} = \overline{(C_0 < \ldots < C_s <\Omega < A < D)}$ for some $D\in \A_p(P)$, and it has to be $D = B$ by uniqueness. Thus, $x$ is an up beat point. Moreover, any element of the form $\overline{(C_0^g < \ldots < C_s^g < \Omega^g < A)}$ is equal to $\overline{(C_0 < \ldots < C_s < \Omega < A^{h})}$ with $A^{h}\leq P$ fully centralized. It is easy to see that $A^h$ is also covered by a unique element, so $\overline{(C_0 < \ldots < C_s< \Omega < A^h)}$ is also an up beat point. After extracting all these orbits, the unique elements containing $\overline{(A)}$ are those of the form $\overline{(C_0 < \ldots < C_s< A<B)}$ and $\overline{(C_0 < \ldots < C_s< A)}$, for $s\geq -1$ and $C_s\leq \Omega$. We can extract $\overline{(C_0 < \ldots < C_s< A)}$ from top to bottom since they are up beats points covered by $\overline{(C_0 < \ldots < C_s< A<B)}$ at the moment of their extraction. Now the remaining elements containing $\overline{(A)}$ are $\overline{(C_0 < \ldots < C_s<A<B)}$, which we extract them from bottom to top since they are down beat points covering $\overline{(C_0 < \ldots < C_s<B)}$, and $\overline{(C_0^g < \ldots < C_s^g <\Omega^g < A <B)}$ which are down beat points covering $\overline{(C_0^g < \ldots < C_s^g <\Omega^g < B)}$.
\end{proof}

Now we prove a result which roughly says that $\A_p(G)'/G$ is a contractible finite space when the $p$-rank of $G$ and the rank of $\Omega$ are very close to $\log_p(|P|)$. In particular, if $P$ has order at most $p^4$ it will follow that $\A_p(G)'/G$ is contractible.

\begin{theorem}\label{theoremPDifferencesLower}
If $r_p(G) - r_p(\Omega)\leq 2$ and $r_p(G) \geq \log_p(|P|)-1$, then $\A_p(G)'/G$ is a contractible finite space.
\end{theorem}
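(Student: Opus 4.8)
The plan is to reduce the statement to the situation already handled in Theorem \ref{theoremLowDifference}, using the order hypothesis to pin down the structure of $\A_p(P)$. As in the previous results, I fix $P\in\Syl_p(G)$ and represent every orbit of chains inside $\A_p(P)'$. First I would dispose of the easy ranges: if $r_p(G)-r_p(\Omega)\leq 1$ the conclusion is exactly Theorem \ref{theoremLowDifference}, so I may assume $r_p(G)-r_p(\Omega)=2$ and write $r=r_p(G)$. The hypothesis $r\geq\log_p(|P|)-1$ means $|P|\leq p^{r+1}$; since $P$ contains an elementary abelian subgroup of order $p^{r}$, either $|P|=p^{r}$ or $|P|=p^{r+1}$. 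The first case forces $P$ itself to be elementary abelian, hence $\Omega=P$ and $r_p(G)-r_p(\Omega)=0$, contrary to our assumption. Thus I am reduced to the single case $|P|=p^{r+1}$ with $r_p(\Omega)=r-2$.

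Next I would invoke Lemma \ref{lemmaLowerThan2}: since $r_p(G)-r_p(\Omega)=2$, the subposet $\P$ is a strong deformation retract of $\A_p(G)'/G$, so it suffices to prove that $\P$ is contractible. Recall that the fully centralized members of chains representing points of $\P$ are comparable to $\Omega$, that the non-central ones have $p$-rank $r$ or $r-1$, and that every maximal elementary abelian subgroup of $P$ contains $\Omega$. The key structural step, and the one that really exploits the order bound, is the following: every fully centralized $A\in\A_p(P)$ of rank $r-1$ which is \emph{not} maximal is covered by a unique maximal element of $\A_p(P)$. Indeed such an $A$ contains $\Omega$ (its rank exceeds $r_p(\Omega)=r-2$), and $A<C_P(A)\leq P$ strictly, so $p^{r-1}<|C_P(A)|\leq p^{r+1}$. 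The value $p^{r+1}$ is impossible, as it would give $A\leq Z(P)$ and hence $A\leq\Omega$; therefore $|C_P(A)|=p^{r}$, and since $A$ is then a central subgroup of index $p$ in $C_P(A)$, the group $C_P(A)$ is abelian. Consequently $\Omega_1(C_P(A))$ is the unique maximal elementary abelian subgroup above $A$, because any elementary abelian group containing $A$ lies in $C_P(A)$.

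With this in hand I would apply Lemma \ref{lemmaUniqueMaximal} repeatedly to each conjugacy class of non-maximal rank-$(r-1)$ subgroups, retracting $\P$ by strong deformation onto the subposet $\Q$ of orbits of chains none of whose members is conjugate to such a subgroup. In $\Q$ the only non-maximal member of a chain that contains $\Omega$ is $\Omega$ itself, while every other member either lies in $\Omega$ or is a maximal elementary abelian containing $\Omega$. Thus $\Q$ has exactly the shape of the poset reached at the end of the proof of Theorem \ref{theoremLowDifference}: up to orbit its chains can be written as $(C_1<\dots<C_s<\Omega<B)$ and its degenerations, with $C_s<\Omega$ and $B$ a maximal elementary abelian subgroup. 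I would then run the same conical contraction onto $\overline{(\Omega)}$, sending $\overline{c}$ to $\overline{c\cup(\Omega)}$ and using $\overline{c}\leq\overline{c\cup(\Omega)}\geq\overline{(\Omega)}$.

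The main obstacle is the well-definedness of this last insertion map, and the verification that each orbit admits a representative with all members comparable to $\Omega$, exactly as in Theorem \ref{theoremLowDifference}. One must check that $\overline{c\cup(\Omega)}$ does not depend on the representative, and in particular that the top element $B$ is not disturbed by the fusion needed to move conjugates back inside $P$. This is handled by the centralizer trick, choosing fully centralized representatives (Lemma \ref{lemmaFCOmega}), together with Alperin's Fusion Theorem \ref{theoremAlpFT} applied inside $C_G(C_s)$: writing the relevant conjugation as a product of normalizer maps $g_i\in N_{C_G(C_s)}(Q_i)$, one uses that $\Omega^{g_i}\leq Z(Q_i)$ commutes with the maximal subgroup $B$, so maximality forces $B=\Omega\,\Omega^{g_i}$ (or else $\Omega^{g_i}=\Omega$), keeping $B$ fixed at each step. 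The only genuinely new point relative to Theorem \ref{theoremLowDifference} is that $B$ may now have rank $r-1$ rather than $r$; since the argument uses only the maximality of $B$, this causes no difficulty, and the contractibility of $\Q$, hence of $\A_p(G)'/G$, follows.
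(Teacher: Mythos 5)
Your overall architecture coincides with the paper's (reduce to $|P|=p^{r+1}$ with $r_p(\Omega)=r-2$, retract onto $\P$ via Lemma \ref{lemmaLowerThan2}, show each non-maximal fc subgroup of rank $r-1$ has a unique maximal overgroup and apply Lemma \ref{lemmaUniqueMaximal}; your derivation that $C_P(A)$ is abelian of order $p^r$ is a harmless variant of the paper's direct identification $C_P(A)=B$). But there is a genuine gap at the crucial well-definedness step for maximal subgroups $B$ of rank $r$. There you recycle the maximality argument of Theorem \ref{theoremLowDifference} verbatim: from $\Omega^{g_i}\leq Z(Q_i)$ you get $\Omega^{g_i}\leq B$ and claim that maximality forces $B=\Omega\,\Omega^{g_i}$ unless $\Omega^{g_i}=\Omega$. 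That implication needs $r_p(\Omega\,\Omega^{g_i})\geq r_p(B)$, which holds in Theorem \ref{theoremLowDifference} because there $r_p(\Omega)=r-1$, so $\Omega\neq\Omega^{g_i}$ already forces rank $r$. Here $r_p(\Omega)=r-2$, so $\Omega\,\Omega^{g_i}$ may have rank only $r-1$, and for $B$ of rank $r$ you obtain merely $\Omega\,\Omega^{g_i}\leq B\cap B^{g_i}$, which pins down neither $B^{g_i}=B$ nor the equality of the two covering orbits. Concretely, if $|P|=p^4$ then $r=3$ and $r_p(\Omega)=1$, so $r_p(\Omega\,\Omega^{g_i})\leq 2<3=r_p(B)$ and the second branch of your dichotomy is outright impossible. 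Your closing remark that the ``only genuinely new point'' is that $B$ may have rank $r-1$ is exactly backwards: the rank-$(r-1)$ maximals are the ones your argument does handle (there $\Omega\neq\Omega^{g_i}$ does force $B=\Omega\,\Omega^{g_i}=B^{g_i}$); it is the rank-$r$ maximals that break it.

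The missing idea is that the order hypothesis $r_p(G)\geq\log_p(|P|)-1$ must be used a \emph{second} time, inside the fusion step itself, not only in the unique-maximal-overgroup computation. Since $|P|=p^{r+1}$ and $B\leq Q\leq P$ with $|B|=p^r$, either $Q=P$, in which case $g\in N_{C_G(C_s)}(P)$ and $\Omega=\Omega_1(Z(P))$ is characteristic in $P$, so $\Omega^g=\Omega$; or $Q=B$, in which case $B^g=B$. Either way the two covers $\overline{(C_0<\ldots<C_s<\Omega<B)}$ and $\overline{(C_0<\ldots<C_s<\Omega<B^g)}$ coincide, which is how the paper closes this step. (Two smaller remarks: your parenthetical claim that a rank-$(r-1)$ subgroup contains $\Omega$ ``because its rank exceeds $r_p(\Omega)$'' is a non sequitur — rank comparison gives no containment; inside $\P$ it follows instead from comparability with $\Omega$ — though you never actually use it in the centralizer bound. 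And finishing with a conical contraction $\overline{c}\mapsto\overline{c\cup(\Omega)}$ rather than the paper's beat-point removals is a cosmetic difference: both hinge on exactly the well-definedness statement whose proof is the gap above.)
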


\begin{proof}
By Proposition \ref{propMxlRank} and Theorem \ref{theoremLowDifference}, we may assume $r_p(G) = \log_p(|P|)-1$ and $r_p(G) - r_p(\Omega) = 2$. By Lemma \ref{lemmaLowerThan2}, we only need to show that $\P$ is contractible.

The idea is to extract beat points in order to reach a subposet with minimum $\overline{(\Omega)}$.

Let $A\in \A_p(P)$ be a non-maximal element of rank $r_p(G)-1$. There exists $B\in \A_p(P)$ of rank $r_p(G)$ such that $A < B$. It implies $B\leq C_P(A)$. On the other hand, $A\nleq Z(P)$ given that $r_p(A)  > r_p(\Omega) = r_p(Z(P))$, so $C_P(A) < P$. By order, $C_P(A) = B$ and $A$ is covered by a unique maximal element of $\A_p(P)$. In particular $A$ is fully centralized. By Lemma \ref{lemmaUniqueMaximal}, $\P$ retracts by strong deformation to the subposet of elements not containing $\overline{(A)}$. Hence, we get a strong deformation retract subposet of $\P$ whose elements are $\overline{(C_0 < \ldots < C_s < \Omega)}$, $\overline{(C_0 < \ldots < C_s < \Omega < B)}$ and $\overline{(C_0 < \ldots < C_s < B)}$, for $B\in \A_p(P)$ maximal of rank $r$ or $r-1$ and $s\geq -1$, and $\overline{(C_0 < \ldots < C_s)}$ for $s\geq 0$, with $C_s\leq\Omega$ in all cases.

Suppose that $B\in \A_p(P)$ is maximal of rank $r_p(G)-1$. It is easy to see by repeating the proof of Theorem \ref{theoremLowDifference} that all elements containing $\overline{(B)}$, which have the form $\overline{(C_0 < \ldots < C_s < B)}$ for $s\geq -1$ and $C_s\leq \Omega$, can be extracted from top to bottom as up beat points covered by $\overline{(C_0 < \ldots < C_s < \Omega < B)}$ at the moment of their extraction. Hence, any element containing $\overline{(B)}$ will also contain $\overline{(\Omega)}$.

We extract the elements containing $\overline{(B)}$ and not containing $\overline{(\Omega)}$ for $B\in \A_p(P)$ maximal of rank $r_p(G)$. These elements have the form $\overline{(C_0 < \ldots < C_s < B)}$ for $s\geq -1$ and $C_s\leq\Omega$, and are covered by $\overline{(C_0 < \ldots < C_s < \Omega < B)}$ and $\overline{(C_0 < \ldots < C_s < \Omega < B^g)}$ at the moment of their extraction, for some $g\in C_G(C_s)$. We need to prove they are equal. By using Alperin's Fusion Theorem, we can assume that $g\in N_{C_G(C_s)}(Q)$ for a subgroup $Q\leq P$ with $B,B^g\leq Q$. If $\Omega^g = \Omega$ we are done. Otherwise, $g\notin N_G(\Omega)$ and so $Q = B$ since it has to be $Q<P$ by order. This gives us $B = B^g$.

We have reached a strong deformation retract of $\P$ whose only elements not containing $\overline{(\Omega)}$ are $\overline{(C_0 < \ldots < C_s )}$ with $C_s\leq\Omega$. Again, by repeating the end of the proof of Theorem \ref{theoremLowDifference}, we can extract them from top to bottom as up beat points covered by $\overline{(C_0 < \ldots < C_s <\Omega)}$ at the moment of their extraction.

Finally, $\P$ retracts by strong deformation to a subposet with minimum $\overline{(\Omega)}$, and consequently it is contractible.
\end{proof}

As an easy consequence of Proposition \ref{propMxlRank} and Theorems \ref{theoremLowDifference} and \ref{theoremPDifferencesLower} we get the following corollary.

\begin{corollary}
If $|P|\leq p^4$, the finite space $\A_p(G)'/G$ is contractible.
\end{corollary}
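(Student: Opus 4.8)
The plan is to reduce the statement to the three results already available---Proposition \ref{propMxlRank} (the abelian-Sylow case), Theorem \ref{theoremLowDifference} (the case $r_p(G)-r_p(\Omega)\leq 1$) and Theorem \ref{theoremPDifferencesLower} (the case $r_p(G)-r_p(\Omega)\leq 2$ together with $r_p(G)\geq \log_p(|P|)-1$)---by a short numerical analysis of the two ranks $r_p(G)$ and $r_p(\Omega)$. Everything rests on two elementary inequalities. First, $r_p(G)=r_p(P)\leq \log_p(|P|)$, because an elementary abelian subgroup of order $p^r$ sitting inside $P$ forces $p^r\leq |P|$. Second, $r_p(\Omega)\geq 1$, because $Z(P)$ is a nontrivial $p$-group and hence $\Omega=\Omega_1(Z(P))$ is a nontrivial elementary abelian group.

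Write $n=\log_p(|P|)$, so that $|P|\leq p^4$ means $n\leq 4$, and set $d=r_p(G)-r_p(\Omega)$. I would then split into three cases according to the value of $d$. If $d\leq 1$, Theorem \ref{theoremLowDifference} applies directly. If $d=2$, the bound $r_p(\Omega)\geq 1$ gives $r_p(G)=r_p(\Omega)+2\geq 3\geq n-1$ (using $n\leq 4$), so the hypotheses of Theorem \ref{theoremPDifferencesLower} are satisfied and it applies. Finally, if $d\geq 3$, then $r_p(G)\geq r_p(\Omega)+3\geq 4\geq n\geq r_p(G)$, forcing $r_p(G)=n$; but $r_p(G)=n$ means $P$ contains an elementary abelian subgroup of order $p^n=|P|$, i.e. $P$ is itself elementary abelian, hence abelian, so Proposition \ref{propMxlRank} applies. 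These three cases exhaust all possibilities, and in each one $\A_p(G)'/G$ is contractible.

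There is no substantive obstacle here: all the real work lies in the earlier theorems, and the corollary is simply the observation that for $|P|\leq p^4$ the gap $r_p(G)-r_p(\Omega)$ can only be $0$, $1$ or $2$, and that in the extremal case $d=2$ the auxiliary bound $r_p(G)\geq \log_p(|P|)-1$ is automatic. The only point demanding a moment's attention is to keep track of which of the two inequalities $r_p(\Omega)\geq 1$ and $r_p(G)\leq n$ is doing the work in each case, and to notice that the boundary situation $r_p(G)=n$ is precisely the abelian-Sylow case already covered by Proposition \ref{propMxlRank}.
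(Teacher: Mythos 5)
Your proposal is correct and takes essentially the same approach as the paper: the paper states the corollary as an easy consequence of Proposition \ref{propMxlRank} and Theorems \ref{theoremLowDifference} and \ref{theoremPDifferencesLower} without writing out the details, and your case analysis on $d = r_p(G)-r_p(\Omega)$, using $r_p(\Omega)\geq 1$ and $r_p(G)\leq \log_p|P|$, supplies exactly the intended bookkeeping. (As a minor remark, your case $d\geq 3$ is in fact vacuous, since $r_p(G)=\log_p|P|$ forces $P$ to be elementary abelian and hence $d=0$, but your treatment of it via Proposition \ref{propMxlRank} is still valid.)
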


\begin{remark}
Almost all the proofs that we have done can be carried out to a general saturated fusion system over a fixed $p$-group $P$. If $\F$ is a saturated fusion system over $P$, we can form the orbit poset $\A_p(P)/\F$ in the following way: if $A,B\in \A_p(P)$ define the relation $A\sim B$ if $\varphi(A) = B$ for some morphism $\varphi$ in the category $\F$. Then $\A_p(P)/\F :=\A_p(P)/\sim $ is contractible with the same homotopy that we have defined in Theorem \ref{theoremApGGcontractil}.

We can define the relation $\sim$ in $\A_p(P)'$ and set $\A_p(P)'/\F := \A_p(P)'/\sim$. Thus, in the case of $\F =\F_P(G)$, $\A_p(P)/\F = \A_p(G)/G$ and $\A_p(P)'/\F = \A_p(G)'/G$.
\end{remark}


\end{document}